\pgfplotsset{compat=1.14}
\newcommand\addvmargin[1]{
  \node[fit=(current bounding box),inner ysep=#1,inner xsep=0]{};
}
\definecolor{mygray}{gray}{0.8}
\definecolor{mycyan}{rgb}{0.4, 0.5, 0.9}
\newcommand\comm[1]{\textbf{\textcolor{blue}{[#1]}}}
\newcommand\commM[1]{\textbf{\textcolor{orange}{[#1]}}}
\newcommand\commAN[1]{\textbf{\textcolor{red}{[#1]}}}
\newcommand{\norm}[1]{\left\lVert#1\right\rVert}
\newcommand{\be}{\begin{equation}}
\newcommand{\ee}{\end{equation}}
\newcommand{\R}{\mathbb{R}}
\numberwithin{equation}{section}
\newtheorem{theorem}{Theorem}
\newtheorem{lemma}[theorem]{Lemma}
\newtheorem{proposition}[theorem]{Proposition}
\newtheorem{corollary}[theorem]{Corollary}
\theoremstyle{definition}
\newtheorem{definition}[theorem]{Definition}
\newtheorem{example}[theorem]{Example}
\newtheorem*{warning*}{Warning!}
\theoremstyle{remark} 
\newtheorem{remark}[theorem]{Remark}
\title{Graph invariants from the topology of rigid isotopy classes}
\author{Mara Belotti}
\address{SISSA (Trieste)}
\email{marabelotti96@gmail.com}
\author{Antonio Lerario}
\date{}
\address{SISSA (Trieste)}
\email{lerario@sissa.it}
\author{Andrew Newman}\thanks{A.N. was supported by Deutsche Forschungsgemeinschaft (DFG, German Research 
Foundation) Graduiertenkolleg 2434 "Facets of Complexity".}
\date{}
\address{A.N. Technische Universit\"at Berlin, Chair of Discrete Mathematics/Geometry, Strasse des 17. Juni 136, 10623 Berlin}
\email{newman@math.tu-berlin.de}
\begin{document}

\makeatletter
\tagsleft@false
\makeatother

\maketitle 

\begin{abstract}We define a new family of graph invariants, studying the topology of the moduli space of their geometric realizations in Euclidean spaces, using a limiting procedure reminiscent of Floer homology. 


Given a labeled graph $G$ on $n$ vertices and $d \geq 1$, $W_{G, d} \subseteq \R^{d \times n}$ denotes the space of nondegenerate realizations of $G$ in $\R^d$. For example if $G$ is the empty graph then $W_{G, d}$ is homotopy equivalent to the configuration space of $n$ points in $\R^d$. Questions about when a certain graph $G$ exists as a geometric in $\R^d$ have been considered in the literature and in our notation have to do with deciding when $W_{G, d}$ is nonempty. However $W_{G, d}$ need not be connected, even when it is nonempty, and we refer to the connected components of $W_{G, d}$ as \emph{rigid isotopy classes} of $G$ in $\R^d$. We study the topology of these rigid isotopy classes. First, regarding the connectivity of $W_{G, d}$, we generalize a result of Maehara that $W_{G, d}$ is nonempty for $d \geq n$ to show that $W_{G, d}$ is $k$-connected for $d \geq n + k + 1$, and so $W_{G, \infty}$ is always contractible.

While $\pi_k(W_{G, d}) = 0$ for $G$, $k$ fixed and $d$ large enough, we also prove that, in spite of this, when $d\to \infty$ the structure of the nonvanishing homology of $W_{G, d}$ exhibits a stabilization phenomenon. The nonzero part of its homology is concentrated in at most $(n-1)$-many equally spaced clusters in degrees between $d-n$ and $(n-1)(d-1)$, and whose structure does not depend on $d$, for $d$ large enough. This leads to the definition of a family of graph invariants, capturing the asymptotic structure of the homology of the rigid isotopy class. For instance, the sum of the Betti numbers of $W_{G,d}$ does not depend on $d$, for $d$ large enough; we call this number the \emph{Floer number} of the graph $G$. This terminology comes by analogy with Floer theory, because of the shifting phenomenon in the degrees of positive Betti numbers of $W_{G, d}$ as $d$ tends to infinity.

Finally, we give asymptotic estimates on the number of rigid isotopy classes of $\R^d$--geometric graphs on $n$ vertices for $d$ fixed and $n$ tending to infinity. When $d=1$ we show that asymptotically as $n\to \infty$ each isomorphism class corresponds to a constant number of rigid isotopy classes, on average. For  $d>1$ we prove a similar statement at the logarithmic scale.
    \end{abstract}
\section{Introduction}

Let $P=(p_1, \ldots, p_n)$ be a point in $\R^{d\times n}.$ The geometric graph associated to $P$ is the labeled graph\footnote{From now on, unless differently specified, the word ``graph'' stands for ``labeled graph''.
} $G(P)$ whose vertices and edges are, respectively:  
\be\ V(G(P))=\{(1, p_1), \ldots, (n, p_n)\}\quad\textrm{and}\quad E(G(P))= \{\textrm{$((i, p_i), (j, p_j)) \, |\, i<j,\, \|p_i-p_j\|^2<1$}\}.\ee 


If a graph $G$ on $n$ vertices is isomorphic to a geometric graph $G(P)$, as a labeled graph, for some $P\in \R^{d\times n}$ we say it is realizable as an $\R^d$--geometric graph on $n$ vertices. It was proved by Maehara in \cite{Maehara} that when $d\geq n$ every graph on $n$ vertices is realizable as an $\R^d$-geometric graph. In particular, if we denote by $\#_{d,n}$ the number of isomorphims classes of labeled $\R^d$--geometric graphs on $n$ vertices, then for $d\geq n$ we have
\be\label{eq:Maheara} \#_{d,n}=2^{\binom{n}{2}}.\ee

This statement can be rephrased using the theory of \textit{discriminants} from real algebraic geometry. To explain this idea let us first introduce the notion of nondegenerate geometric graph: the $\R^d$--geometric graph $G(P)$ is called \textit{nondegenerate} if there is no pair of indices $1\leq i<j\leq n$ such that $\|p_i-p_j\|^2=1.$ Studying nondegenerate graphs is not an actual restriction, since the set of isomorphism classes of labeled nondegenerate $\R^d$--geometric graphs coincides with the set of all possible isomorphism classes of labeled $\R^d$--geometric graphs (see Lemma \ref{lem:broad} below). Moreover, nondegenerate geometric graphs are simpler to study, because of their stability under small perturbations of the defining points.

In this setting the discriminant consists of the set of \textit{degenerate} $\R^d$--geometric graphs:
\be \Delta_{d, n}=\{P\in \R^{d\times n}\,|\, \textrm{there exist $1\leq i<j\leq n$ such that $\|p_i-p_j\|^2=1$}\}\subset \R^{d\times n}.\ee
This discriminant partitions $\R^{d\times n} \setminus \Delta_{d, n}$ into many disjoint, connected open sets, which we will call \textit{chambers}. If two points $P_0$ and $P_1$ belong to the same chamber in $\R^{d \times n} \setminus \Delta_{d, n}$ then clearly $G(P_0)$ and $G(P_1)$ are isomorphic, but the reverse implication does not hold in general, leading to the following definition.
\begin{definition}If two points $P_0, P_1\in \R^{d\times n}\backslash \Delta_{d,n}$ belong to the same chamber, that is if there is a continuous curve $P:[0, 1]\to \R^{d\times n}\backslash \Delta_{d, n}$ with $P(0)=P_0$ and $P(1)=P_1$, we will say that the geometric graphs $G(P_0)$ and $G(P_1)$ are \emph{rigidly isotopic}. 
\end{definition}
As an example of $\R^d$-geometric graphs which are isomorphic but not rigidly isotopic, consider $P_0=(-2, 0)$ and $P_1=(0, -2)$: the $\R$--geometric graphs $G(P_0)$ and $G(P_1)$ are isomorphic; they are both the graph on $2$ vertices with no edges, but they are not rigid isotopic since any curve $P(t)\in \R^{1\times 2}$ with $P(0)=P_0$ and $P(1)=P_1$ must intersect the discriminant. Another example is depicted in Figure \ref{fig:exrig}.

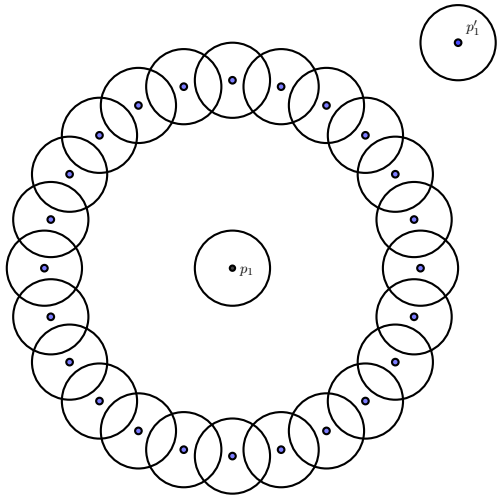
\begin{figure}
\centering
\captionsetup{margin=1.5cm}
\definecolor{ududff}{rgb}{0.30196078431372547,0.30196078431372547,1.}
\definecolor{uuuuuu}{rgb}{0.26666666666666666,0.26666666666666666,0.26666666666666666}
\definecolor{xdxdff}{rgb}{0.49019607843137253,0.49019607843137253,1.}
\begin{tikzpicture}[thick,scale=0.5, every node/.style={scale=0.5}][line cap=round,line join=round,>=triangle 45,x=1.0cm,y=1.0cm]
\clip(-7.,-7.) rectangle (7.5,7.5);
\draw [line width=0.8pt] (0.,5.) circle (1.cm);
\draw [line width=0.8pt] (1.2940952255126077,4.829629131445338) circle (1.cm);
\draw [line width=0.8pt] (2.5,4.330127018922189) circle (1.cm);
\draw [line width=0.8pt] (3.5355339059327386,3.535533905932733) circle (1.cm);
\draw [line width=0.8pt] (4.330127018922193,2.5) circle (1.cm);
\draw [line width=0.8pt] (4.829629131445341,1.2940952255126) circle (1.cm);
\draw [line width=0.8pt] (5.,0.) circle (1.cm);
\draw [line width=0.8pt] (4.829629131445339,-1.2940952255126066) circle (1.cm);
\draw [line width=0.8pt] (4.33012701892219,-2.5) circle (1.cm);
\draw [line width=0.8pt] (3.535533905932734,-3.535533905932738) circle (1.cm);
\draw [line width=0.8pt] (2.5,-4.330127018922193) circle (1.cm);
\draw [line width=0.8pt] (1.294095225512601,-4.829629131445341) circle (1.cm);
\draw [line width=0.8pt] (0.,-5.) circle (1.cm);
\draw [line width=0.8pt] (-1.2940952255126055,-4.82962913144534) circle (1.cm);
\draw [line width=0.8pt] (-2.5,-4.330127018922191) circle (1.cm);
\draw [line width=0.8pt] (-3.5355339059327378,-3.5355339059327355) circle (1.cm);
\draw [line width=0.8pt] (-4.330127018922193,-2.5) circle (1.cm);
\draw [line width=0.8pt] (-4.829629131445341,-1.2940952255126026) circle (1.cm);
\draw [line width=0.8pt] (-5.,0.) circle (1.cm);
\draw [line width=0.8pt] (-4.829629131445341,1.2940952255126041) circle (1.cm);
\draw [line width=0.8pt] (-4.330127018922193,2.5) circle (1.cm);
\draw [line width=0.8pt] (-3.5355339059327373,3.5355339059327373) circle (1.cm);
\draw [line width=0.8pt] (-2.5,4.330127018922193) circle (1.cm);
\draw [line width=0.8pt] (-1.2940952255126037,4.8296291314453415) circle (1.cm);
\draw [line width=0.8pt] (0.,0.) circle (1.cm);
\draw [line width=0.8pt] (6.,6.) circle (1.cm);
\draw (0.06,0.18) node[anchor=north west] {$p_1$};
\draw (6.08,6.74) node[anchor=north west] {$p_1'$};
\begin{scriptsize}
\draw [fill=xdxdff] (0.,5.) circle (2.5pt);
\draw [fill=uuuuuu] (0.,0.) circle (2.0pt);
\draw [fill=xdxdff] (-1.2940952255126037,4.8296291314453415) circle (2.5pt);
\draw [fill=xdxdff] (-2.5,4.330127018922193) circle (2.5pt);
\draw [fill=xdxdff] (-3.5355339059327373,3.5355339059327373) circle (2.5pt);
\draw [fill=xdxdff] (-4.330127018922193,2.5) circle (2.5pt);
\draw [fill=xdxdff] (-4.829629131445341,1.2940952255126041) circle (2.5pt);
\draw [fill=xdxdff] (-5.,0.) circle (2.5pt);
\draw [fill=xdxdff] (-4.829629131445341,-1.2940952255126026) circle (2.5pt);
\draw [fill=xdxdff] (-4.330127018922193,-2.5) circle (2.5pt);
\draw [fill=xdxdff] (-3.5355339059327378,-3.5355339059327355) circle (2.5pt);
\draw [fill=xdxdff] (-2.5,-4.330127018922191) circle (2.5pt);
\draw [fill=xdxdff] (-1.2940952255126055,-4.82962913144534) circle (2.5pt);
\draw [fill=xdxdff] (0.,-5.) circle (2.5pt);
\draw [fill=xdxdff] (1.294095225512601,-4.829629131445341) circle (2.5pt);
\draw [fill=xdxdff] (2.5,-4.330127018922193) circle (2.5pt);
\draw [fill=xdxdff] (3.535533905932734,-3.535533905932738) circle (2.5pt);
\draw [fill=xdxdff] (4.33012701892219,-2.5) circle (2.5pt);
\draw [fill=xdxdff] (4.829629131445339,-1.2940952255126066) circle (2.5pt);
\draw [fill=xdxdff] (5.,0.) circle (2.5pt);
\draw [fill=xdxdff] (4.829629131445341,1.2940952255126) circle (2.5pt);
\draw [fill=xdxdff] (4.330127018922193,2.5) circle (2.5pt);
\draw [fill=xdxdff] (3.5355339059327386,3.535533905932733) circle (2.5pt);
\draw [fill=xdxdff] (2.5,4.330127018922189) circle (2.5pt);
\draw [fill=xdxdff] (1.2940952255126077,4.829629131445338) circle (2.5pt);
\draw [fill=ududff] (6.,6.) circle (2.5pt);
\end{scriptsize}

\end{tikzpicture}
     \caption{Here we are drawing points in $\R^2$ together with the circles centered at those points and with radius $\frac{1}{\sqrt{2}}$. Now, let us define $P$ and $P'$ points in $\R^{2\times 25}$ in such a way that $p_1$ is the point inside the big circle and $p_1'$ is the point outside the big circle while $p_i=p_i'$ for $i>1$ and they are the points on the big circle. Then, the two geometric graphs $G(P)$ and $G(P')$ are isomorphic but not rigidly isotopic. }
\label{fig:exrig}
\end{figure}

For $n$ and $d$ the number of rigid isotopy classes of geometric graphs on $n$ vertices in $\R^d$ is exactly given by $b_0(\R^{d \times n} \setminus \Delta_{d, n})$, and we always clearly have 
\[b_0(\R^{d \times n} \setminus \Delta_{d, n}) \geq \#_{d, n}.\]
One natural question therefore is for what values of $n$ and $d$ do the two notions coincide. Moreover, one could consider higher-dimensional notions of connectivity of $\R^{d \times n} \setminus \Delta_{d, n}$ and study the higher homology and the homotopy groups of the space of its connected components. As we will see, this study will lead us to a definition of a new graph invariant, which reminds of Floer homology, as well as precise asymptotics for the enumeration of rigid isotopy and isomorphism classes of geometric graphs.



\subsection{The case $d\to \infty$} As we will prove in Corollary \ref{cor:conn1} below, for $d\geq {n+1}$, the two notions of isomorphic and rigidly isotopic coincide and $b_0(\R^{d\times n}\backslash \Delta_{d, n})=\#_{d, n}$. Therefore, adopting this language we can reformulate\footnote{Here and below, for a topological space $X$ we will denote by $b_k(X)=\mathrm{dim}_{\mathbb{Z}_2}(H^k(X; \mathbb{Z}_2))$ its $k$--th Betti number and by $b(X)=\sum_{k=0}^{\infty} b_k(X)$, its total Betti number, whenever these numbers are defined. This will happen for all the spaces that we will consider in this paper: they will all be homotopy equivalent to finite CW--complexes.} the identity in \eqref{eq:Maheara} as:
\be b_0(\R^{d\times n}\backslash \Delta_{d, n})=2^{\binom{n}{2}},\ee
which is true for $d\geq {n+1}.$ The realizability result of Maehara \cite{Maehara} and the fact that for large $d$ ``rigid isotopy'' and ``isomorphism'' are the same notion, seem to settle all relevant questions related to the study of the asymptotics for the number of chambers of $b_0(\R^{d \times n} \setminus \Delta_{d, n})$ for fixed $n$ and large $d$. However, as we will see, the topology of the chambers of the complement of the discriminant is extremely rich and some unexpected structure emerges as $d\to \infty.$

In order to explain this phenomenon, let us label the chambers of $\R^{d\times n}\backslash \Delta_{d,n}$ with the corresponding isomorphism class of labeled geometric graphs: given a graph $G$ on $n$ vertices we define
\be W_{G, d}=\{P\in \R^{d\times n}\backslash \Delta_{d,n}\,|\, G(P)\cong G\}\subset \R^{d\times n}.\ee
In other words, $W_{G,d}$ consists of all the points $P\in \R^{d\times n}$ not on the discriminant whose associated geometric graph is isomorphic to $G$. For small $d$ this set could be a union of several chambers, but for large $d$ it is an actual chamber, that is a connected open set. This can be rephrased by saying that for every graph $G$ on $n$ vertices and for large enough $d$, the homotopy group $\pi_0(W_{G,d})$ consists of a single element. In fact, as we will show, the same statement is true for all the homotopy groups, once the group is fixed and $d$ becomes large enough.

\begin{theorem}\label{thm:zerohomotopy}For every $k\geq 0$ and for $d\geq k+n+1$ we have $\pi_k(W_{G,d})=0.$ 
\end{theorem}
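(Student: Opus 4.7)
The plan is to proceed by induction on the number of vertices $n$. The base case $n=1$ is trivial since $W_{G,1}\cong\R^d$ is contractible. For the inductive step, let $G'$ denote the induced subgraph of $G$ on the first $n-1$ vertices and consider the forgetful projection
\[
\pi:W_{G,d}\longrightarrow W_{G',d},\qquad (p_1,\ldots,p_n)\longmapsto(p_1,\ldots,p_{n-1}).
\]
Writing $A$ and $B$ for the partition of $\{1,\ldots,n-1\}$ into neighbors and non-neighbors of vertex $n$ in $G$, the fiber of $\pi$ over $P'=(p_1,\ldots,p_{n-1})\in W_{G',d}$ is the open semi-algebraic region
\[
F_{P'}=\Bigl(\bigcap_{i\in A}B_1(p_i)^\circ\Bigr)\cap\Bigl(\bigcap_{j\in B}\bigl(\R^d\setminus\overline{B_1(p_j)}\bigr)\Bigr)\subset\R^d,
\]
i.e.\ the space of admissible positions for the new point $p_n$.

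The first technical step is to show that in the range $d\geq n+k+1$, every fiber $F_{P'}$ is $k$-connected. Writing $F_{P'}=C\setminus U$ with $C=\bigcap_{i\in A}B_1(p_i)^\circ$ convex open and $U=\bigcup_{j\in B}\overline{B_1(p_j)}$ a union of at most $n-1$ closed unit balls, I would note that arbitrary intersections of these balls are convex (hence contractible), so the nerve theorem identifies $U$ with its nerve, a simplicial complex on at most $n-1$ vertices and of dimension at most $n-2$. Alexander duality then gives $\widetilde H_i(F_{P'})\cong\widetilde H^{d-i-1}(\text{nerve})$, which vanishes for $i\leq d-n$. Together with the simple connectedness of $F_{P'}$, valid in our high-dimensional regime, this yields $(d-n)$-connectivity of the fiber, and in particular $k$-connectivity since $d-n\geq k+1$.

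If $\pi$ were a Serre fibration, combining this $k$-connectivity of the fibers with the inductive hypothesis $\pi_k(W_{G',d})=0$ (which applies since $d\geq n+k+1\geq (n-1)+k+1$) would immediately give $\pi_k(W_{G,d})=0$ by the long exact sequence in homotopy. The main obstacle is that $\pi$ is not literally a Serre fibration: as $P'$ varies across $W_{G',d}$, two bounding unit spheres of $F_{P'}$ can pass through each other, and the nerve (hence the fiber's homotopy type) jumps. I would resolve this either by (a) restricting first to a suitable deformation retract of $W_{G,d}$ on which $\pi$ becomes a genuine fiber bundle, or (b) by a direct obstruction-theoretic lifting argument: given $f:S^k\to W_{G,d}$, the inductive hypothesis extends $\pi\circ f$ to some $g:D^{k+1}\to W_{G',d}$, and one then constructs a lift $\tilde g:D^{k+1}\to W_{G,d}$ of $g$ extending $f$, cell by cell on a CW structure of $D^{k+1}$ relative to $S^k$, the obstructions vanishing from the uniform $k$-connectivity of the fibers.
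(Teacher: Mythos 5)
Your induction on $n$ via the forgetful projection has a gap at the step where you need the fibers to be uniformly $k$-connected: the fiber $F_{P'}$ can be \emph{empty} over a nonempty open subset of $W_{G',d}$, so $\pi$ is not even surjective. For a concrete instance, take $n=3$ with vertex $3$ adjacent to vertices $1$ and $2$, which are non-adjacent in $G'$; then $W_{G',d}$ contains configurations with $\|p_1-p_2\|\geq 2$, and for such $P'$ there is no admissible position for $p_3$, since $\|p_3-p_1\|<1$ and $\|p_3-p_2\|<1$ would force $\|p_1-p_2\|<2$. This defeats both of your proposed fixes: in (b), the extension $g:D^{k+1}\to W_{G',d}$ of $\pi\circ f$ supplied by the inductive hypothesis may pass through points with empty fiber, where no cell-by-cell lift exists; to avoid them you would first have to homotope $g$ into the image of $\pi$, and showing that this image (or the deformation retract of $W_{G,d}$ postulated in (a)) is itself highly connected is essentially the theorem you are trying to prove. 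There are secondary problems too: your Alexander duality is stated as if $U$ were removed from $\R^d$ (or $S^d$), but when $A\neq\emptyset$ the fiber is $C\setminus U$ with $C$ a bounded convex open set, so the duality formula needs a different (Mayer--Vietoris/nerve) justification; and the simple connectivity of the fiber is asserted rather than proved.

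For comparison, the paper sidesteps fibering over a smaller vertex set entirely and instead stabilizes in $d$. It shows (i) that for $d\geq k+n+1$ the map $[S^k,W_{G,d}]\to[S^k,W_{G,d+1}]$ induced by inclusion is injective, by reducing to full-rank configurations via transversality to the strata of non-full-rank matrices (which have codimension at least $d-n+1$) and then decoupling a configuration into its Gram--Schmidt frame and its coordinates in the spanned subspace; and (ii) that the inclusion $W_{G,d}\hookrightarrow W_{G,d+n}$ is null-homotopic via the explicit homotopy $P\mapsto\bigl(\sqrt{1-t}\,P,\sqrt{t}\,R\bigr)$ toward a fixed realization $R$ of $G$ in $\R^n$, which preserves all the sign conditions. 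Composing, $[S^k,W_{G,d}]$ injects into a one-point set. If you want to salvage an induction on $n$, you would at minimum need to replace $W_{G',d}$ by the image of $\pi$ and control its topology, which is not addressed in the proposal.
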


Theorem \ref{thm:zerohomotopy} in fact generalizes the result of Maehara \cite{Maehara}. Taking the standard convention that a topological space is said to be $(-1)$-connected provided it is nonempty, Theorem \ref{thm:zerohomotopy} for $k = -1$ is Maehara's result that every graph on $n$ vertices can be realized as a geometric graph in $\R^d$, for $d \geq n$.

Notice that there is a natural sequence of inclusions:
\be\label{eq:inclusionsWg}\cdots \longhookrightarrow W_{G, d}\longhookrightarrow W_{G, d+1}\longhookrightarrow\cdots \ee
obtained by simply including $\R^{d\times n}$ into $\R^{(d+1)\times n}$ by appending a list of zeroes to the coordinates of $P$. 
The proof of the Theorem \ref{thm:zerohomotopy} goes through two intermediate steps, which are of independent interest: we first prove that for every $k\geq 0$ and for $d\geq k+n+1$ the inclusion $W_{G, d}\longhookrightarrow W_{G, d+1}$ induces an injection on the homotopy classes of maps, and then we prove that the inclusion $W_{G, d}\longhookrightarrow W_{G, d+n}$ is homotopic to a constant map.

\begin{example}[Homotopy groups of the configuration space of $n$ points in $\R^d$] \label{ex:confhomology} Let us consider the graph $G$ consisting of $n$ vertices and no edges. It is easy to see that the corresponding chamber $W_{G, d}$ is homotopy equivalent\footnote{Here and below, for two topological spaces $X$ and $Y$ we use the symbol $X\simeq Y$ to denote that they are homeomorphic and $X\sim Y$ to denote that they are homotopy equivalent.}  to the configuration space of $n$ distinct points in $\R^d$:
\be W_{G,d}\sim \mathrm{Conf}_n(\R^d).\ee 
In this case one can compute exactly the homotopy groups of $W_{G,d}$: for every $k\geq 0$ and for $d\geq 3$ we have (see \cite[Chapter 2, Theorem 1.1]{FadellHusseini})
\be \pi_k(W_{G, d})\simeq \pi_k(\mathrm{Conf}_n(\R^d))\simeq \bigoplus_{j=1}^{n-1}\pi_k\bigg(\underbrace{S^{d-1}\vee\cdots\vee S^{d-1}}_{\textrm{bouquet of $j$ spheres}}\bigg).\ee
Since $\pi_k(S^{d-1}\vee\cdots\vee S^{d-1})=0$ for $d\geq k+2$, in this case we immediately see that also $\pi_k(W_{G, d})=0$ for $d\geq k+2.$
\end{example}

It is natural at this point to put the sequence of inclusions \eqref{eq:inclusionsWg} into the infinite dimensional space $\R^{\infty\times n}$ of $n$--tuples of sequences $(p_1, \ldots, p_n)$ such that for every $j=1, \ldots, n$ all but finitely many elements in the sequence $p_j$ are zero:
\be \R^{\infty\times n}=\varinjlim \R^{d\times n}.\ee
The definition of geometric graph and discriminant also makes sense in this infinite dimensional space, see Section \ref{sec:infinite}. The chambers, are now defined as follows: for a given graph $G$ on $n$ vertices, we set
\be W_{G, \infty}=\left\{P=(p_1, \ldots, p_n)\in \R^{\infty\times n}\backslash \Delta_{\infty, n}\,\big|\, G(P)\cong G\right\}.\ee
From Theorem \ref{thm:zerohomotopy} we deduce the following.
\begin{theorem}\label{thm:contractible} For every graph $G$, the set $W_{G, \infty}=\varinjlim W_{G, d}$ is contractible.
\end{theorem}
\subsection{Floer homology of a graph}Summarizing the picture so far: as $d\to \infty$ each $W_{G, d}$ becomes eventually $k$--connected and its direct limit $W_{G, \infty}$ has no homotopy. Moreover, by Hurewicz Theorem, each fixed reduced Betti number of $W_{G,d}$ vanishes for $d$ large enough: more precisely, for every $k>0$ there exists $d(k)>0$ such that
\be b_k(W_{G, d})=0\quad \forall d\geq d(k).\ee
But this is not the whole story. Before we continue let us discuss one more, likely familiar, example.
\begin{example}[The infinite dimensional sphere]\label{ex:sphere}Let $G$ be the graph consisting of two disjoint points. Then $W_{G, d}\simeq \R^d\times S^{d-1}\sim S^{d-1}$ and $W_{G, \infty}\simeq S^{\infty}\times \R^{d\times (n-1)}\sim S^{\infty}.$ In this case \eqref{eq:inclusionsWg} become:
\be \cdots \longhookrightarrow S^{d-1}\longhookrightarrow S^{d}\longhookrightarrow \cdots \hookrightarrow S^{\infty}=\varinjlim S^d.\ee
If now we look at the Betti numbers of $S^{d}$ we see that there is a hole in dimension $d$ that moves to infinity as $d\to \infty$ and it disappears when $d=\infty$. The sphere $S^{\infty}$ has no cohomology except in dimension zero, but still it has cohomology every time we cut it with a finite--dimensional space.
\end{example}

The phenomenon described in Example \ref{ex:sphere} can be interpreted using some extraordinary cohomology theory, in the context of the Leray--Schauder degree and, more generally, of Floer homology theories (see \cite{szulkin, gebagranas, abbondandolo}). This behaviour has also been observed for non--holonomic loop spaces in Carnot groups, see \cite{AgrachevGentileLerario}. In all these examples we are dealing with a sequence of spaces $X_d$ whose direct limit $X_\infty$ is contractible, but for every $d$ large enough each space carries the same amount of cohomology, just shifted in its dimension. A more general family of examples where this occurs is the iterated suspension.

\begin{example}[The iterated suspension]Let $X_0$ be a CW-complex and define $X_d=S X_{d-1}$, where $S X$ is the suspension of $X$:
\be SX=(X\times I)/\sim,\ee 
and the equivalence relation ``$\sim$'' is given by: $(x_1, 0)\sim (x_2, 0)$ and $(x_1, 1)\sim (x_2, 1)$ for all $x_1, x_2\in X$. We have a natural sequence of inclusions
\be \label{eq:suspension}\cdots \longhookrightarrow X_d\longhookrightarrow X_{d+1}\longhookrightarrow \cdots \ee
given by mapping $X_d\to SX_{d}$ homeomorphically to $X_d\times \{\frac{1}{2}\}.$ We denote by $X_\infty=\varinjlim X_d$ the direct limit of the sequence of inclusions \eqref{eq:suspension}. If $X_0=\{x_1, x_2\}$, then $X_d=S^d$ and $X_\infty=S^{\infty}.$ For every $k$ the space $X_d$ becomes eventually $k$--connected when $d$ is large enough, and $X_\infty$ is contractible. If one looks at the homology of $X_d$, this is made by a cluster of holes that shifts to infinity as $d\to \infty$. This can be expressed, for example, by looking at the Poincar\'e polynomial of $X_d$:
\be P_{X_d}(t)=1+t^d(P_{X_0}(t)-1).\ee
These holes are not present when $d=\infty$, but the sum of the Betti numbers of $X_d$ is constant: 
\be b(X_d)=P_{X_d}(1)\equiv P_{X_0}(1)= b(X_0).\ee
\end{example}

We will prove that a similar phenomenon happens, for all the spaces $W_{G, d}$: their reduced cohomology is made of ``clusters of holes'' that ``shift'' to infinity as $d\to \infty$, see Figure \ref{fig:chamberfloer}. In fact we show also that for $G$ on $n$ vertices there are at most $n-1$ such clusters. More precisely, we have the following result.

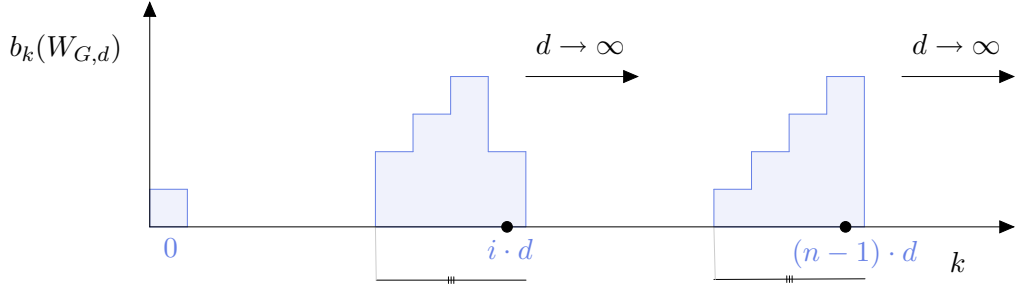
\begin{figure}
\centering
\begin{tikzpicture}[line cap=round,line join=round,>=triangle 45,x=1.0cm,y=1.0cm]
\clip(-2.1,-1.) rectangle (12.,3.);
\fill[line width=0.3pt,color=mycyan,fill=mycyan,fill opacity=0.10000000149011612] (0.,0.) -- (0.,0.5) -- (0.5,0.5) -- (0.5,0.) -- cycle;
\fill[line width=0.3pt,color=mycyan,fill=mycyan,fill opacity=0.10000000149011612] (3.,0.) -- (3.,1.) -- (3.5,1.) -- (3.5,1.5) -- (4.,1.5) -- (4.,2.) -- (4.5,2.) -- (4.5,1.) -- (5.,1.) -- (5.,0.) -- cycle;
\fill[line width=0.3pt,color=mycyan,fill=mycyan,fill opacity=0.10000000149011612] (7.5,0.) -- (7.5,0.5) -- (8.,0.5) -- (8.,1.) -- (8.5,1.) -- (8.5,1.5) -- (9.,1.5) -- (9.,2.) -- (9.5,2.) -- (9.5,0.) -- cycle;
\draw [line width=0.3pt,color=mycyan] (0.,0.)-- (0.,0.5);
\draw [line width=0.3pt,color=mycyan] (0.,0.5)-- (0.5,0.5);
\draw [line width=0.3pt,color=mycyan] (0.5,0.5)-- (0.5,0.);
\draw [line width=0.3pt,color=mycyan] (0.5,0.)-- (0.,0.);
\draw [line width=0.3pt,color=mycyan] (3.,0.)-- (3.,1.);
\draw [line width=0.3pt,color=mycyan] (3.,1.)-- (3.5,1.);
\draw [line width=0.3pt,color=mycyan] (3.5,1.)-- (3.5,1.5);
\draw [line width=0.3pt,color=mycyan] (3.5,1.5)-- (4.,1.5);
\draw [line width=0.3pt,color=mycyan] (4.,1.5)-- (4.,2.);
\draw [line width=0.3pt,color=mycyan] (4.,2.)-- (4.5,2.);
\draw [line width=0.3pt,color=mycyan] (4.5,2.)-- (4.5,1.);
\draw [line width=0.3pt,color=mycyan] (4.5,1.)-- (5.,1.);
\draw [line width=0.3pt,color=mycyan] (5.,1.)-- (5.,0.);
\draw [line width=0.3pt,color=mycyan] (5.,0.)-- (3.,0.);
\draw [line width=0.3pt,color=mycyan] (7.5,0.)-- (7.5,0.5);
\draw [line width=0.3pt,color=mycyan] (7.5,0.5)-- (8.,0.5);
\draw [line width=0.3pt,color=mycyan] (8.,0.5)-- (8.,1.);
\draw [line width=0.3pt,color=mycyan] (8.,1.)-- (8.5,1.);
\draw [line width=0.3pt,color=mycyan] (8.5,1.)-- (8.5,1.5);
\draw [line width=0.3pt,color=mycyan] (8.5,1.5)-- (9.,1.5);
\draw [line width=0.3pt,color=mycyan] (9.,1.5)-- (9.,2.);
\draw [line width=0.3pt,color=mycyan] (9.,2.)-- (9.5,2.);
\draw [line width=0.3pt,color=mycyan] (9.5,2.)-- (9.5,0.);
\draw [line width=0.3pt,color=mycyan] (9.5,0.)-- (7.5,0.);
\draw (0.05,-0.025) node[anchor=north west] {\textcolor{mycyan}{$0$}};
\draw (4.35,-0.025) node[anchor=north west] {\textcolor{mycyan}{$i\cdot d$}};
\draw (8.4,-0.025) node[anchor=north west] {\textcolor{mycyan}{$(n-1)\cdot d$}};
\draw (5.0,2.7) node[anchor=north west] {$d\to\infty$};
\draw (10.0,2.7) node[anchor=north west] {$d\to\infty$};
\draw [line width=0.3pt] (3.0111111111111115,-0.70888888888889)-- (5.0022222222222235,-0.70888888888889);
\draw [line width=0.3pt] (3.971111111111112,-0.6644444444444456) -- (3.971111111111112,-0.7533333333333345);
\draw [line width=0.3pt] (4.006666666666668,-0.6644444444444456) -- (4.006666666666668,-0.7533333333333345);
\draw [line width=0.3pt] (4.042222222222223,-0.6644444444444456) -- (4.042222222222223,-0.7533333333333345);
\draw [line width=0.3pt] (7.5,-0.7)-- (9.50888888888889,-0.6911111111111122);
\draw [line width=0.3pt] (8.468692582040255,-0.6512688701131852) -- (8.469085891857684,-0.7401568888518706);
\draw [line width=0.3pt] (8.504247789535729,-0.651111546186214) -- (8.504641099353158,-0.7399995649248994);
\draw [line width=0.3pt] (8.539802997031204,-0.6509542222592428) -- (8.540196306848634,-0.7398422409979282);
\draw [line width=0.3pt,color=mygray] (3.,0.)-- (3.0111111111111115,-0.70888888888889);
\draw [line width=0.3pt,color=mygray] (7.5,0.)-- (7.517816760316126,-0.6999211647773622);
\draw [->,line width=0.3pt] (0.,0.) -- (0.,3.);
\draw [->,line width=0.3pt] (0.,0.) -- (11.5,0.);
\draw (-2.,2.7) node[anchor=north west] {$b_k(W_{G,d})$};
\draw [->,line width=0.3pt] (5.,2.) -- (6.5,2.);
\draw [->,line width=0.3pt] (10.,2.) -- (11.5,2.);
\draw (10.5,-0.2) node[anchor=north west] {$k$};
\begin{scriptsize}
\draw [fill=black] (4.75,0.) circle (2.0pt);
\draw [fill=black] (9.25,0.) circle (2.0pt);
\end{scriptsize}
\end{tikzpicture}
	\caption{A plot of the Betti numbers of $W_{G, d}$. The width of each non zero cluster of holes is $\binom{n}{2}+1$, which is a constant. Each of these clusters is placed at a multiple of $d$ and as $d\to \infty$ they shift to infinity. The total Betti number of $W_{G, d}$, i.e. the blue area, becomes constant for $d$ large enough.} \label{fig:chamberfloer}
\end{figure}

\begin{theorem}\label{theorem:floer}For every graph $G$ on $n$ vertices there exist polynomials\footnote{These polynomials only depend on $G$ and not on $d$.} $Q_{G,1}, \ldots, Q_{G,{n-1}}$ each of degree at most $\binom{n}{2}+1$ such that for $d\geq {n\choose 2}+2$  the Poincar\'e polynomial of $W_{G,d}$ is 
\be P_{W_{G, d}}(t)=1+t^{d-{n\choose 2}-1}Q_{G, 1}(t)+\cdots +t^{md-{n\choose 2}-1}Q_{G, m}(t)+\cdots+t^{(n-1)d-{n\choose2 }-1}Q_{G, n-1}(t).\ee
In particular, there exists $\beta(G)>0$ such that
\be b(W_{G, d})=P_{W_{G, d}}(1)\equiv 1+\sum_{\ell=1}^{n-1}Q_{G, \ell}(1)=\beta(G)\ee for $d$ large enough (i.e. the sum of the Betti numbers of $W_{G, d}$ becomes a constant, which depends on $G$ only).
\end{theorem}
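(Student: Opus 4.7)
The plan is to compute the Poincar\'e polynomial of $W_{G,d}$ via the Gram matrix map, exploiting the structure of Stiefel fibers. First I would reduce to the translation-quotient $W'_{G,d}\subseteq \R^{d(n-1)}$ obtained by fixing $p_n = 0$, which has the same Poincar\'e polynomial as $W_{G,d}$ since $W_{G,d}\cong \R^d\times W'_{G,d}$. Then consider the Gram matrix map $\Phi\colon W'_{G,d}\to U_G$ sending $(p_1,\ldots,p_{n-1})$ to $(p_i\cdot p_j)_{i,j}\in \mathrm{Sym}^+_{n-1}(\R)$. Since each sphere condition $\|p_i-p_j\|^2 \gtrless 1$ is linear in the Gram matrix entries, $U_G$ is an open convex (hence contractible) subset of the positive semi-definite cone, of dimension $\binom{n}{2}$, and topologically coincides with the orbit quotient $W'_{G,d}/O(d)$.

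On the open top-rank stratum $U_G^\circ \subseteq U_G$ (matrices of rank exactly $n-1$), $\Phi$ is an $O(d)$-equivariant fiber bundle with fiber the Stiefel manifold $V_{n-1}(\R^d)=O(d)/O(d-n+1)$. For $d\geq \binom{n}{2}+2$ the mod-$2$ cohomology of $V_{n-1}(\R^d)$ is an exterior algebra on generators in degrees $d-1,d-2,\ldots,d-(n-1)$; the products of $m$ distinct generators span the cohomology in the degrees $\{md-\sum_{i\in I} i : I\subseteq\{1,\ldots,n-1\},\ |I|=m\}$, naturally grouped into $n-1$ nontrivial clusters indexed by $m=1,\ldots,n-1$. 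A direct estimate using $\sum_{i\in I} i \in [\binom{m+1}{2},\ m(n-1)-\binom{m}{2}]$ shows that the $m$-th Stiefel cluster sits inside the target range $[md-\binom{n}{2}-1,\,md]$ predicted by the theorem.

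I would then incorporate the lower-rank strata $U_G^{(r)}\subseteq U_G$ (matrices of rank $r<n-1$), above which the fiber of $\Phi$ is the smaller Stiefel manifold $V_r(\R^d)$, via a stratified Leray spectral sequence, or equivalently a Mayer--Vietoris decomposition of $W'_{G,d}$ into the top-rank open piece and tubular neighborhoods of the lower-rank strata. For $d\geq \binom{n}{2}+2$ the degree ranges of all cluster-$m$ contributions (from any stratum) remain contained in $[md-\binom{n}{2}-1,\,md]$, and distinct clusters are separated by gaps larger than $\binom{n}{2}+1$; consequently no spectral-sequence differential can transport cohomology between clusters. Defining $Q_{G,m}(t)$ as the aggregate contribution in the $m$-th cluster, the bound $\deg Q_{G,m}\leq\binom{n}{2}+1$ follows from the width of these combined cluster ranges, and the Poincar\'e polynomial takes the predicted form.

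The main obstacle is the stratification: since $U_G$ is itself contractible, the $G$-dependence of $P_{W_{G,d}}(t)$ (contrast the contractible $W_{K_n,d}$ with $W_{\emptyset_n,d}\simeq \mathrm{Conf}_n(\R^d)$, whose Poincar\'e polynomial is $\prod_{k=1}^{n-1}(1+kt^{d-1})$) must come entirely from the gluing of the positive-codimension strata. A rigorous verification likely requires either an inductive argument on $n$ (reducing the rank-$r$ contribution to an instance of $W_{G',r}$ for an induced subgraph $G'$) or an equivariant Borel-construction reformulation packaging all strata into a single $d$-independent homotopy-theoretic object. Once the cluster separation is established in the stable range, evaluating $P_{W_{G,d}}(t)$ at $t=1$ yields the Floer number $\beta(G)=1+\sum_{\ell=1}^{n-1}Q_{G,\ell}(1)$.
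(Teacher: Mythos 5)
Your route — quotient by translations, push forward along the Gram-matrix map $\Phi$ to a convex set $U_G\subseteq\mathrm{Sym}^{\geq 0}_{n-1}(\R)$, and read off the cohomology from the rank stratification with Stiefel-manifold fibers — is genuinely different from the paper's proof, which realizes $W_{G,d}$ as a system of quadratic inequalities, compactifies, passes to a double cover in $S^{nd}$, applies Alexander duality, and runs the Agrachev--Lerario spectral sequence on the space $\Omega\subset S^{\binom{n}{2}}$ of covectors. The setup of your first two paragraphs is correct (convexity of $U_G$, the fiber $V_{n-1}(\R^d)$ over the top stratum, and the location of the exterior-algebra clusters of $H^*(V_{n-1}(\R^d);\mathbb{Z}_2)$).

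However, there is a genuine gap, and it sits exactly where the content of the theorem lies. Since $U_G$ is contractible, \emph{all} of the $G$-dependence must be produced by the lower-rank strata and their gluing, and your proposal defers precisely this step ("a rigorous verification likely requires\ldots"). Two specific problems. First, the claimed containment of every stratum's contribution in $[md-\binom{n}{2}-1,\,md]$ is not justified and is not a routine estimate: in any filtration/Gysin-type spectral sequence the rank-$r$ stratum enters through a degree shift by the codimension of $\Phi^{-1}(U_G^{(r)})$ in $\R^{d(n-1)}$, namely $(n-1-r)(d-r)$, which depends on $d$ and redistributes the fiber clusters of $V_r(\R^d)$ by amounts like $r(n-1-r)$ plus the base degree $\dim U_G^{(r)}$; a crude bookkeeping allows contributions outside the target window, so cancellations or a sharper duality argument are needed and are not supplied. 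Second, and more fundamentally, showing that the clusters are \emph{separated} does not establish that the polynomials $Q_{G,m}$ are \emph{independent of $d$}: the differentials acting within a cluster, and the extension problems in the stratified spectral sequence, could a priori vary with $d$. The paper's proof is organized around exactly this point — the index-function factorization $\mathrm{ind}^+(\omega H_\epsilon)=d\cdot\mathrm{ind}_1^+(\omega)+\mathrm{ind}^+_{0,\epsilon}(\omega)$ of Lemma \ref{lemma:indexfunction} produces $d$-independent $E_2$ entries $H^*(A_\ell,B_\ell)$ and $H^*(B_\ell,A_{\ell+1})$, Proposition \ref{proposition:differential} shows $d_2=0$ via triviality of a line bundle over the contractible $\Omega$, and degeneration then follows for degree reasons once $d\geq\binom{n}{2}+2$. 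Your proposal has no analogue of these two steps, so as written it proves neither the clustering nor the stabilization $b(W_{G,d})\equiv\beta(G)$.
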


Each polynomial $Q_{G, m}$ corresponds to one of the clusters above and keeps track of the Betti numbers $b_k(W_{G, d})$ with $0\leq dm-k\leq {n\choose 2}$, i.e. with index $k$ located ``near'' $d m$ (remember that ${n\choose 2}$ is a constant in this asymptotic regime). These clusters are the ``Floer homologies'' of the graph.

The proof of Theorem \ref{theorem:floer} uses a spectral sequence argument: each $W_{G,d}$ can be described as a system of quadratic inequalities and one can use the technique developed in \cite{Agrachev, AgrachevLerario} for the study of its Betti numbers. In this case, as $d\to \infty$, the spectral sequence that we need to consider converges at the second  step (i.e. $E_2=E_\infty$). One can prove that both $E_2 $ and the second differential $d_2$ have an asymptotic stable shape: as $d\to \infty$ the nonzero part of the spectral sequence looks like a skinny table where there is no room for higher differentials and the nonzero elements of this table are located near the rows which are labeled by indices which are multiples of $d$ (see Figure \ref{fig:spectral} below). 

An interesting question arising from Theorem \ref{theorem:floer} is, for a given graph $G$ on $n$ vertices, to compute the number
\be \beta(G)=\lim_{d\to\infty}b(W_{G, d}).\ee
We call this number the \emph{Floer number} of the graph $G$. This number is a graph invariant, as well as the polynomials from Theorem \ref{theorem:floer}. Table \ref{tbl:PoincarePolynomials} shows the value of this number for all the possible graphs on $4$ vertices, but the general case is still mysterious. There is more discussion about some of the details of Table \ref{tbl:PoincarePolynomials} in Example \ref{ex:4vertices}.
\begin{table}
\begin{tabular}{c|c|c|c}
Graph & Poincar\'{e} polynomial & $\beta(G)$ &Labeled copies \\ \hline 
\centering
\begin{tikzpicture}[baseline=0, scale = 0.2]
\draw(-1, -1) node[circle, draw, fill=black, inner sep=0pt, minimum width=2pt]{};
\draw(1, -1) node[circle, draw, fill=black, inner sep=0pt, minimum width=2pt]{};
\draw(1, 1) node[circle, draw, fill=black, inner sep=0pt, minimum width=2pt]{};
\draw(-1, 1) node[circle, draw, fill=black, inner sep=0pt, minimum width=2pt]{};
\addvmargin{1mm};
\end{tikzpicture}
& $1 + 6t^{d -1} + 11t^{2d - 2} + 6t^{3d - 3}$ &  24&1 \\ \hline
\centering

\begin{tikzpicture}[baseline=0, scale = 0.2]
\draw(-1, -1) node[circle, draw, fill=black, inner sep=0pt, minimum width=2pt]{};
\draw(1, -1) node[circle, draw, fill=black, inner sep=0pt, minimum width=2pt]{};
\draw(1, 1) node[circle, draw, fill=black, inner sep=0pt, minimum width=2pt]{};
\draw(-1, 1) node[circle, draw, fill=black, inner sep=0pt, minimum width=2pt]{};
\draw(-1, -1) -- (-1, 1);
\addvmargin{1mm};
\end{tikzpicture} & $1 + 3t^{d -1} + 2t^{2d - 2}$ & 6& 6 \\ \hline

\begin{tikzpicture}[baseline=0, scale = 0.2]
\draw(-1, -1) node[circle, draw, fill=black, inner sep=0pt, minimum width=2pt]{};
\draw(1, -1) node[circle, draw, fill=black, inner sep=0pt, minimum width=2pt]{};
\draw(1, 1) node[circle, draw, fill=black, inner sep=0pt, minimum width=2pt]{};
\draw(-1, 1) node[circle, draw, fill=black, inner sep=0pt, minimum width=2pt]{};
\draw(-1, -1) -- (-1, 1);
\draw(1, -1) -- (1, 1);
\addvmargin{1mm};
\end{tikzpicture} & $1 + t^{d -1}$ & 2&3 \\ \hline

\begin{tikzpicture}[baseline=0, scale = 0.2]
\draw(-1, -1) node[circle, draw, fill=black, inner sep=0pt, minimum width=2pt]{};
\draw(1, -1) node[circle, draw, fill=black, inner sep=0pt, minimum width=2pt]{};
\draw(1, 1) node[circle, draw, fill=black, inner sep=0pt, minimum width=2pt]{};
\draw(-1, 1) node[circle, draw, fill=black, inner sep=0pt, minimum width=2pt]{};
\draw(-1, -1) -- (-1, 1);
\draw(-1, 1) -- (1, 1);
\addvmargin{1mm};
\end{tikzpicture} & $1 + 2t^{d -1} + t^{2d - 2}$ & 4&12 \\ \hline

\begin{tikzpicture}[baseline=0, scale = 0.2]
\draw(-1, -1) node[circle, draw, fill=black, inner sep=0pt, minimum width=2pt]{};
\draw(1, -1) node[circle, draw, fill=black, inner sep=0pt, minimum width=2pt]{};
\draw(1, 1) node[circle, draw, fill=black, inner sep=0pt, minimum width=2pt]{};
\draw(-1, 1) node[circle, draw, fill=black, inner sep=0pt, minimum width=2pt]{};
\draw(-1, -1) -- (-1, 1);
\draw(1, -1) -- (1, 1);
\draw(-1, 1) -- (1, 1);
\addvmargin{1mm};
\end{tikzpicture} & $1 + t^{d -1}$ &2& 12 \\ \hline

\begin{tikzpicture}[baseline=0, scale = 0.2]
\draw(-1, -1) node[circle, draw, fill=black, inner sep=0pt, minimum width=2pt]{};
\draw(1, -1) node[circle, draw, fill=black, inner sep=0pt, minimum width=2pt]{};
\draw(1, 1) node[circle, draw, fill=black, inner sep=0pt, minimum width=2pt]{};
\draw(-1, 1) node[circle, draw, fill=black, inner sep=0pt, minimum width=2pt]{};
\draw(-1, -1) -- (-1, 1);
\draw(-1, 1) -- (1, -1);
\draw(-1, 1) -- (1, 1);
\addvmargin{1mm};
\end{tikzpicture} & $1 + t^{d - 2} + t^{d - 1} + t^{2d - 3}$ & 4 &4 \\ \hline

\begin{tikzpicture}[baseline=0, scale = 0.2]
\draw(-1, -1) node[circle, draw, fill=black, inner sep=0pt, minimum width=2pt]{};
\draw(1, -1) node[circle, draw, fill=black, inner sep=0pt, minimum width=2pt]{};
\draw(1, 1) node[circle, draw, fill=black, inner sep=0pt, minimum width=2pt]{};
\draw(-1, 1) node[circle, draw, fill=black, inner sep=0pt, minimum width=2pt]{};
\draw(-1, -1) -- (-1, 1);
\draw(-1, 1) -- (1, 1);
\draw(1, 1) -- (-1, -1);
\addvmargin{1mm};
\end{tikzpicture} & $1 + t^{d -1}$ & 2&4  \\ \hline

\begin{tikzpicture}[baseline=0, scale = 0.2]
\draw(-1, -1) node[circle, draw, fill=black, inner sep=0pt, minimum width=2pt]{};
\draw(1, -1) node[circle, draw, fill=black, inner sep=0pt, minimum width=2pt]{};
\draw(1, 1) node[circle, draw, fill=black, inner sep=0pt, minimum width=2pt]{};
\draw(-1, 1) node[circle, draw, fill=black, inner sep=0pt, minimum width=2pt]{};
\draw(-1, -1) -- (-1, 1);
\draw(-1, 1) -- (1, 1);
\draw(1, 1) -- (-1, -1);
\draw(1, 1) -- (1, -1);
\addvmargin{1mm};
\end{tikzpicture} & $1 + t^{d -1}$ & 2&12 \\ \hline

\begin{tikzpicture}[baseline=0, scale = 0.2]
\draw(-1, -1) node[circle, draw, fill=black, inner sep=0pt, minimum width=2pt]{};
\draw(1, -1) node[circle, draw, fill=black, inner sep=0pt, minimum width=2pt]{};
\draw(1, 1) node[circle, draw, fill=black, inner sep=0pt, minimum width=2pt]{};
\draw(-1, 1) node[circle, draw, fill=black, inner sep=0pt, minimum width=2pt]{};
\draw(-1, -1) -- (-1, 1);
\draw(1, -1) -- (1, 1);
\draw(-1, 1) -- (1, 1);
\draw(-1, -1) -- (1, -1);
\addvmargin{1mm};
\end{tikzpicture} & $1 + t^{d - 2} + t^{d - 1} + t^{2d - 3}$ &4& 3 \\ \hline

\begin{tikzpicture}[baseline=0, scale = 0.2]
\draw(-1, -1) node[circle, draw, fill=black, inner sep=0pt, minimum width=2pt]{};
\draw(1, -1) node[circle, draw, fill=black, inner sep=0pt, minimum width=2pt]{};
\draw(1, 1) node[circle, draw, fill=black, inner sep=0pt, minimum width=2pt]{};
\draw(-1, 1) node[circle, draw, fill=black, inner sep=0pt, minimum width=2pt]{};
\draw(-1, -1) -- (-1, 1);
\draw(1, -1) -- (1, 1);
\draw(-1, 1) -- (1, 1);
\draw(-1, -1) -- (1, -1);
\draw(1, 1) -- (-1, -1);
\addvmargin{1mm};
\end{tikzpicture} & $1 + t^{d - 1}$ &2& 6 \\ \hline

\begin{tikzpicture}[baseline=0, scale = 0.2]
\draw(-1, -1) node[circle, draw, fill=black, inner sep=0pt, minimum width=2pt]{};
\draw(1, -1) node[circle, draw, fill=black, inner sep=0pt, minimum width=2pt]{};
\draw(1, 1) node[circle, draw, fill=black, inner sep=0pt, minimum width=2pt]{};
\draw(-1, 1) node[circle, draw, fill=black, inner sep=0pt, minimum width=2pt]{};
\draw(-1, -1) -- (-1, 1);
\draw(1, -1) -- (1, 1);
\draw(-1, 1) -- (1, 1);
\draw(-1, -1) -- (1, -1);
\draw(1, 1) -- (-1, -1);
\draw(-1, 1) -- (1, -1);
\addvmargin{1mm};
\end{tikzpicture} & 1 & 1&1 \\ \hline
\end{tabular}
\caption{Poincar\'e polynomials and Floer numbers for $W_{G, d}$ for $G$ on 4 vertices. For every graph ``labeled copies'' refers to the number of isomorphism classes of labeled graphs with the same unlabeled graph.}\label{tbl:PoincarePolynomials}
\end{table}

We can make some observations in a few cases that suggest conjectures about $\beta(G)$ for general graphs. In the case of graphs on four or fewer vertices, we see that the Poincar\'e polynomial of $W_{G, d}$ has a general form with exponents given in terms of $d$. Once $d$ is large enough that $G$ has a realization as a geometric graph in $\R^d$ we see that the Poincar\'{e} polynomial is determined by the general form. From this we conjecture that for every graph $G$ there is a general form of the Poincar\'{e} polynomial of $W_{G, d}$ with exponents given in terms of $d$ that is valid as long as $d$ is large enough that $W_{G, d}$ is nonempty. This would imply that as soon as $G$ can be realized as a geometric graph in $\R^d$, $b(W_{G, d}) = \beta(G)$. In the case of graphs realizable in $\R$, we would have that $\beta(G)$ counts the number of chambers of $W_{G, 1}$. To see this recall that in the $d = 1$ case  $\R^{1 \times n} \setminus \Delta_{1, n}$ is a disjoint union of polyhedra so every chamber is contractible; therefore homology can only exists in degree zero.
A first step toward the proof of this conjecture would be to prove that all the differentials of the spectral sequence that we use in the proof of Theorem \ref{theorem:floer} are zero for all $d\geq n$.


\begin{example}[Betti numbers of the configuration space of $n$ points in $\R^d$]\label{ex:confbettinumbers} The Poincar\'e polynomial of $\mathrm{Conf}_n(\R^d)$ for $d\geq 1$ is given by (see \cite[Chapter V, Corollary 1.4]{FadellHusseini})
\be P_{\mathrm{Conf}_n(\R^d)}(t)=\prod_{j=1}^{n-1}(1+j t^{d-1}).\ee
Consequently $b(\mathrm{Conf}_n(\R^d))=P_{\mathrm{Conf}_n(\R^d)}(t)(1)\equiv n!.$ In other words, for the graph $G$ consisting of $n$ disjoint points we have $\beta(G)=n!$.
\end{example}

\subsection{The case $n\to \infty$}\label{section:largen} 
 Concerning the other asymptotic regime, the first case of interest is when $d=1$: here $\Delta_{1,n}$ is a hyperplane arrangement, since each quadric $\{|p_i-p_j|^2=1\}\subset \R^{1\times n}$ is the union of the two hyperplanes $\{p_i-p_j=1\}$ and $\{p_i-p_j=-1\}.$ It turns out that the number of chambers of the complement of such a hyperplane arrangement, that is the number of rigid isotopy classes of $\R$--geometric graphs on $n$ vertices, equals the number of \emph{labeled semiorders} of $[n]$. Using techniques from analytic combinatorics, we will prove the following theorem.

\begin{theorem}The number of rigid isotopy classes of $\R$--geometric graphs on $n$ vertices equals:
\be b_0\left(\R^{1\times n}\backslash \Delta_{1,n}\right)=\frac{1}{n}\cdot\sqrt{6\, \textrm{\normalfont{log}}\frac{4}{3}}\cdot \left(\frac{n}{e\:\textrm{\normalfont{log}}\frac{4}{3} }\right)^{n}\left(1+O(n^{-\frac{1}{2}})\right).\ee
\end{theorem} 
It is in fact possible also to compute the asymptotics of $\#_{1,n}$ as $n\to \infty$, we do this in Theorem \ref{thm:number}.
It is remarkable that the two numbers $b_0(\R^{1\times n}\backslash \Delta_{1, n})$ and $\#_{1,n}$ have the same asymptotic, up to a multiplicative constant\footnote{The constant $\frac{8}{e^{\frac{1}{12}}}$ is approximatively 7.36.}:
\be\label{eq:equi} b_0(\R^{1\times n}\backslash \Delta_{1,n}) =\frac{8}{e^{\frac{1}{12}}}\cdot\#_{1, n} \left(1+O(n^{-\frac{1}{2}})\right).\ee

The case when $d\geq 2$ is more delicate to handle. This is in large part because the discriminant in higher dimensions is an arrangement of quadrics rather than an arrangment of hyperplanes. For this general case we will prove the following upper and lower bounds for the number of rigid isotopy classes and isomorphism classes. 

\begin{theorem}\label{thm:lower}
For $d \geq 2$ fixed and for $n\geq 4d+1$ one has the following bounds:

\be \left(\frac{1}{(d+1)e^2}\right)^{dn}n^{dn}\leq \#_{d, n}\leq b_0\left(\R^{d \times n}\backslash \Delta_{d,n}\right)\leq 2dn \left(\frac{3e}{2d} \right)^{dn} n^{dn}
\ee
\end{theorem}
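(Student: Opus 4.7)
The middle inequality $\#_{d,n}\leq b_0(\R^{d\times n}\setminus\Delta_{d,n})$ is essentially tautological: each isomorphism class of labeled geometric graphs corresponds to the nonempty set $W_{G,d}$, which is a union of chambers, and distinct isomorphism classes yield disjoint such unions. The two remaining inequalities require genuinely different ideas.

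For the upper bound I plan to view $\Delta_{d,n}$ as an arrangement of $s=\binom{n}{2}$ real quadric hypersurfaces (each of degree $D=2$) in $\R^N$ with $N=dn$, namely the vanishing loci of the polynomials $\|p_i-p_j\|^2-1$ for $1\leq i<j\leq n$, and then invoke a classical Milnor--Thom--Warren-type bound of the form
\[
b_0(\R^N\setminus V)\;\leq\;\sum_{k=0}^{N}\binom{s}{k}(2D-1)^k.
\]
In the regime $s\gg N$ (guaranteed by the hypothesis $n\geq 4d+1$) this sum is dominated by its last term; combining $\binom{s}{N}\leq(es/N)^N$ with $\binom{n}{2}/(dn)\leq n/(2d)$ and $(2D-1)^N=3^{dn}$, and absorbing the prefactor $(N+1)\leq 2dn$, yields after a short computation the asserted upper bound $2dn\cdot(3e/(2d))^{dn}\,n^{dn}$.

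For the lower bound I plan to exhibit an explicit family of $(n/((d+1)e^2))^{dn}$ pairwise distinct edge sets $E(G(P))$ realized by $\R^d$-geometric graphs on $n$ labeled vertices. The idea is to exploit all $dn$ real degrees of freedom: fix a well-chosen ``base'' arrangement of $n$ points in $\R^d$, partition $[n]$ into $d+1$ blocks of size roughly $n/(d+1)$ (the assumption $n\geq 4d+1$ guarantees each block is nontrivial), and within each block independently discretize the $d$-dimensional neighborhood of each point into roughly $(n/((d+1)e^2))^d$ ``cells'' distinguished by their distance-$1$ signatures with respect to every other point. Each admissible choice of one cell per point then produces a distinct labeled edge set, giving the claimed count.

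The main obstacle will be the lower-bound construction itself: one must arrange the base configuration and the cells so that (a) every choice of cells lies in $\R^{d\times n}\setminus\Delta_{d,n}$, and (b) no two distinct cell-choices yield the same labeled edge set. This requires a delicate stability/independence argument ensuring that perturbations of different coordinates of different points interact in a combinatorially transparent way; the factor $1/(d+1)$ in the bound reflects the partition into $d+1$ blocks, and the factor $1/e^2$ emerges from a Stirling-type estimate when passing from raw counts of admissible cell configurations to distinct edge sets.
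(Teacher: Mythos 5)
Your middle inequality is fine, and the arithmetic you sketch for the two outer bounds does reproduce the paper's numerology; but in both substantive inequalities the step you treat as known or deferred is exactly where all the work lies. For the upper bound, the ``classical Milnor--Thom--Warren-type bound'' $b_0(\R^N\setminus V)\leq\sum_{k=0}^{N}\binom{s}{k}(2D-1)^k$ is not a citable theorem in that form. Warren's theorem counts \emph{sign patterns}, and the paper explicitly warns that a single sign pattern can be a disjoint union of several rigid isotopy classes, so it does not bound $b_0$ of the complement; the cell-counting bounds that do control connected components of sign conditions (Pollack--Roy, Basu--Pollack--Roy) carry extra factors of the form $4^{k}$ that would destroy the constant $(3e/(2d))^{dn}$. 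The paper therefore has to \emph{prove} a bound of the shape you want: it passes to the one-point compactification $\hat{\Delta}_{d,n}$, uses Alexander--Pontryagin duality to rewrite $b_0(\R^{d\times n}\setminus\Delta_{d,n})$ as $b_{dn-1}(\hat{\Delta}_{d,n})+1$, runs the Mayer--Vietoris spectral sequence for the covering of $\hat{\Delta}_{d,n}$ by the compactified quadrics $\hat{\Delta}^{G}_{d,n}$, and bounds each $E_1$-entry by Milnor's theorem, each $\hat{\Delta}^{G}_{d,n}$ being an algebraic set cut out by degree-$2$ equations, so $b(\hat{\Delta}^{G}_{d,n})\le 2\cdot 3^{dn}$. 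This gives $\sum_{k=1}^{dn}\binom{\binom{n}{2}}{k}\,2\cdot 3^{dn}$, and the hypothesis $n\ge 4d+1$ is used precisely to replace this sum by $dn$ times its largest term. Without this (or an equivalent) argument, your upper bound is unproved.

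For the lower bound you have guessed the right combinatorial skeleton (labels clustered into $d+1$ blocks near the vertices of a unit regular $d$-simplex, $d$ degrees of freedom per added point, Stirling producing the $e^2$), but you explicitly defer the ``stability/independence argument,'' and that argument is the entire content of the proof. Moreover, the picture of \emph{independently} discretizing a neighborhood of each point into $(n/((d+1)e^2))^d$ cells cannot be realized as stated: the construction must be sequential. In the paper, point $p_k$ is placed near the unique intersection point $q_\pi$ of the $d$ unit spheres centered at a transversal $\pi$ of the previously placed clusters; making this well defined requires a Lipschitz-continuity lemma for that intersection point together with nested precision scales $\epsilon_i=\epsilon_0/C^{n-i}$, the number of choices at step $k$ is $\lfloor (k-1)/(d+1)\rfloor^{d}$ --- which varies with $k$ and only on average equals your per-point cell count --- and distinctness of the resulting labeled graphs needs a separation lemma showing that different transversals give the new vertex different neighborhoods. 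As written, your proposal correctly identifies these obstacles but does not overcome them, so neither outer inequality is established.
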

Note that these bounds imply that $\#_{d, n}$ and $b_0\left(\R^{d \times n}\backslash \Delta_{d,n}\right)$ become equivalent at the logarithmic scale, giving the following analogue of \eqref{eq:equi}:
\be \log b_0(\R^{d \times n}\backslash \Delta_{d,n})=\left(\log\#_{d, n}\right)(1+o(1))\quad \quad \textrm{as $n\to \infty$}. \ee
For the proof of the upper bound we will use the fact that $\Delta_{d,n}$ is a real algebraic set: using Alexander--Pontryagin duality, we bound the topology of the complement of the discriminant $\R^{d \times n} \setminus \Delta_{d, n}$ by studying the topology the one-point compactification of $\Delta_{d, n}$ which we denote by $\hat{\Delta}_{d, n}$. This one-point compactification can also be described in an algebraic way and studied using \cite{milnor1964betti}. 

For the lower bound, our proof is an higher dimensional version of \cite{McDiarmidMuller}, where the authors prove that in the case $d=2$ there exists a constant $\alpha>0$ such that $\#_{2,n}\geq\alpha^nn^{2n}$.

Finally we observe that one of our central questions here has been to establish bounds on the number of connected components of $\R^{d \times n} \setminus \Delta_{d, n}$ and more generally on $b_k(\R^{d \times n} \setminus \Delta_{d, n})$ for values of $k$ that are fixed but not necessarily zero. Moving from these low-degree Betti numbers, it is also interesting to look at the Betti numbers of $\R^{d \times n} \setminus \Delta_{d, n}$ in degrees close to $nd$. Because of Alexander duality to determine the $(dn - k)$--th Betti number of $\R^{d \times n} \setminus \Delta_{d, n}$ it suffices to determine the $(k - 1)$st Betti number of $\hat{\Delta}_{d, n}$. The next result gives information on some of the top Betti numbers; this result holds in general with no restriction that $d$ and $n$ be sufficiently large.
\begin{theorem}\label{thm:highbettinumbers}
For every $d$, $n$, $\hat{\Delta}_{d, n}$ is $(n + d - 3)$-connected, but not $(n + d - 2)$-connected. By Alexander--Pontryagin duality this implies that $H^{nd - n - d + 1}(\R^{d \times n} \setminus \Delta_{d, n}) \neq 0$, but all higher cohomology groups of $\R^{d \times n} \setminus \Delta_{d, n}$ vanish.
\end{theorem}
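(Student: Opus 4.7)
The proof proceeds through Alexander--Pontryagin duality, which concentrates both claims on a single cohomological degree on the complement side. Since $\Delta_{d,n}$ is a real algebraic set (hence CW), duality yields
\be
\tilde H_k(\hat\Delta_{d,n}) \;\cong\; \tilde H^{dn-k-1}(\R^{d\times n}\setminus\Delta_{d,n}) \;=\; \bigoplus_G \tilde H^{dn-k-1}(W_{G,d}),
\ee
so the theorem becomes the pair of statements: (i) $\tilde H^j(W_{G,d})=0$ for every realizable graph $G$ and every $j>(n-1)(d-1)$, and (ii) $\tilde H^{(n-1)(d-1)}(W_{G_0,d})\neq 0$ for some $G_0$. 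The homotopical $(n+d-3)$-connectivity of $\hat\Delta_{d,n}$ then follows from (i) by Hurewicz, after separately checking simple connectivity for $n+d\geq 4$; this can be done by van Kampen on the cover $\hat\Delta_{d,n}=\bigcup_e \hat\Delta_e$, each piece $\hat\Delta_e \sim S^{dn-1}\vee S^{d(n-1)}$ being simply connected when $d\geq 2$, plus a direct check in the small remaining cases.

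For (ii), I would take $G_0=\emptyset$. The map $P\mapsto \lambda(P)\cdot P$ with $\lambda(P)=\max(1,\,2/\min_{i\neq j}\|p_i-p_j\|)$ defines a deformation retraction $\mathrm{Conf}_n(\R^d)\to W_{\emptyset,d}$, so by Example \ref{ex:confbettinumbers} we get $\tilde H^{(n-1)(d-1)}(W_{\emptyset,d})\cong \mathbb{Z}^{(n-1)!}\neq 0$, providing the non-vanishing at the bottom of the connectivity range.

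The core of the proof is the uniform cohomological-dimension bound (i). By translation invariance $W_{G,d}=\R^d\times W_{G,d}^0$, so it suffices to show that the slice $W_{G,d}^0=\{P\in W_{G,d}:p_1=0\}\subset \R^{d(n-1)}$ has cohomological dimension at most $(n-1)(d-1)=d(n-1)-(n-1)$. Since $W_{G,d}^0$ is a basic semi-algebraic set cut out by $\binom{n}{2}$ strict quadratic inequalities $\varepsilon_{ij}(\|p_i-p_j\|^2-1)>0$, I would apply the quadratic spectral sequence from the machinery of Theorem \ref{theorem:floer}: the strategy is to show that its $E_2$-page is supported in bidegrees $(p,q)$ with $p+q\leq (n-1)(d-1)$, and this support bound survives passage to $E_\infty$ regardless of the higher differentials.

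\textbf{Main obstacle.} The hardest part is the $E_2$-support bound for general $d,n$, since Theorem \ref{theorem:floer} only gives collapse in the stabilization regime $d\geq \binom{n}{2}+2$. For small $d$, higher differentials could a priori shift classes into forbidden total degrees, so one needs a direct combinatorial analysis to rule this out. The identity $(|T|-1)+d\cdot c(T)=n+d-2$ saturated by spanning trees $T$ of $K_n$ (where $c(T)=1$ counts connected components and $|T|=n-1$) controls the bottom of a Vassiliev-style resolution of $\hat\Delta_{d,n}$ along its cover by the quadrics $\hat\Delta_e$, and through Alexander duality this is exactly what forces the top cohomological degree $(n-1)(d-1)$ on each $W_{G,d}$.
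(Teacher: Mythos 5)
Your reduction via Alexander duality and your non-vanishing argument (the empty graph, whose chamber is homotopy equivalent to $\mathrm{Conf}_n(\R^d)$ with top reduced cohomology in degree $(n-1)(d-1)$) are correct, and the latter coincides with the paper's. But the heart of the theorem --- your statement (i), that $\tilde H^j(W_{G,d})=0$ for every $G$ and every $j>(n-1)(d-1)$, equivalently $\tilde H_k(\hat\Delta_{d,n})=0$ for $k\leq n+d-3$ --- is never proved. You correctly flag it as the main obstacle and then offer two sketches, neither of which closes it. The chamber-by-chamber spectral sequence route only gives degree control in the stable range $d\geq\binom{n}{2}+2$, where Proposition \ref{propo:structure} confines the nonzero entries to a few rows; for small $d$ you give no argument that the $E_2$-page is supported in total degree at most $(n-1)(d-1)$, and a priori higher differentials and extra nonzero entries could produce classes in forbidden degrees. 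Your second sketch --- the spanning-tree identity controlling a ``Vassiliev-style resolution'' of $\hat\Delta_{d,n}$ --- points at the right combinatorics but is circular as written: you invoke it to ``force the top cohomological degree on each $W_{G,d}$,'' yet deducing a per-chamber bound from the discriminant requires exactly the low-degree vanishing of $H_*(\hat\Delta_{d,n})$ that you are trying to establish (and once you have that vanishing, the per-chamber bound is no longer needed). There is also a secondary gap: homological vanishing plus Hurewicz requires simple connectivity of $\hat\Delta_{d,n}$, and your van Kampen step is only asserted, not carried out.

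For comparison, the paper works entirely on the discriminant side and avoids both difficulties at once: it covers $\hat\Delta_{d,n}$ by the $\binom{n}{2}$ compactified quadrics $\hat\Delta^{i,j}_{d,n}$, shows that each $t$-fold intersection $\hat\Delta^G_{d,n}$ is the Thom space of a rank $d\beta_0(G)$ vector bundle over a compact base and hence $(d\beta_0(G)-1)$-connected, checks the numerical inequality $n-t+d-2\leq d\beta_0(G)-1$ (which holds because $n-t=\beta_0-\beta_1\leq\beta_0$ and $d\geq1$; equality is governed by your spanning-tree identity), and applies Bj\"orner's generalized nerve lemma, the nerve being a full simplex since every intersection contains the point at infinity. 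This yields the homotopical connectivity directly, for all $d$ and $n$, with no spectral sequence on the chamber side and no separate $\pi_1$ argument. To salvage your write-up you would essentially have to import this discriminant-side computation, at which point your step (i) becomes a corollary rather than an input.
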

The description of $\Delta_{d, n}$ as a union of quadrics allows us to use a Mayer-Vietoris type argument, and the theorem follows by an application of a generalized nerve lemma.

\section{Preliminaries}
\subsection{Geometric graphs}
There are several different notions of geometric graphs in the literature. The type of geometric graph we consider here are also sometimes called intersection graphs or space graphs. Formally the definition for geometric graph we use here is the following.

\begin{definition}[Geometric graph]\label{def:gg} Given a point $P\in \R^{d\times n}$ we denote by $G(P)$ the labeled graph whose vertices and edges are, respectively:
\be V(G(P))=\{(1,p_1), \ldots, (n,p_n)\}\quad \textrm{and}\quad E(G(P))= \{\textrm{$((i,p_i), (j,p_j)) \, |\, i<j,\, \|p_i-p_j\|^2 < 1$}\}.\ee
We say that $G(P)$ is an $\R^{d}$--geometric graph. If a labeled graph $G$ is isomorphic to $G(P)$ for some $P\in\R^{d\times n}$, we say that $G$ is \textit{realizable} as an $\R^{d}$--geometric graph.
 
 We say that the geometric graph $G(P)$ is nondegenerate if $P\notin \Delta_{d, n}$, where
\be \Delta_{d, n}=\{P\in \R^{d\times n}\,|\, \textrm{there exist $1\leq i<j\leq n$ such that $\|p_i-p_j\|^2=1$}\}\subset \R^{d\times n}.\ee
\end{definition}
\begin{remark}The reason for considering in our definition the list of pairs $\{(1,p_1), \ldots, (n,p_n)\}$ as the set of vertices of $G(P)$, instead of the list $\{p_1, \ldots, p_n\}$, is just formal. In other settings it may be more natural to take $p_1, ..., p_n$ to be \emph{distinct} points in $\R^d$ and then to define a graph with vertex set $\{p_1, ..., p_n\}$ and edges $(p_i, p_j)$ provided that $\| p_i - p_j \|^2< 1$. This is the approach taken by Maehara \cite{Maehara} who studies the \emph{sphericity} of graphs, the minimum dimension $d$ in which a graph may be realized as a geometric graph in $\R^d$ with vertices given by distinct points. For us it makes sense to  associate graphs on $n$ vertices in $\R^d$ to points of $\R^{d \times n}$ therefore the actual points in $\R^d$ may not all be unique from one another. The two-coordinate approach to describe the vertices allows for such repetition and naturally associates each point in $\R^d \setminus \Delta_{d, n}$ to unique labeled graph.

\end{remark}
We will consider the following notions of equivalence of geometric graphs.
\begin{definition}Let $G(P_0), G(P_1)$ be two $\R^d$--geometric graphs on $n$ vertices, with $P_0, P_1\in \R^{d\times n}$. We will say that they are \emph{isomorphic} if they are isomorphic as labeled geometric graphs. Moreover, if they are both nondegenerate, we will say that they are \emph{rigidly isotopic} if there exists a continuous curve $P:[0,1]\to \R^{d\times n}\backslash \Delta_{d,n}$ such that $P(0)=P_0$ and $P(1)=P_1.$ 
\end{definition}
Notice that, since $\Delta_{d,n}$ is an algebraic set, its complement is a semialgebraic set and its path-components are the same as its connected components. Therefore two nondegenerate geometric graphs $G(P_0), G(P_1)$ are rigidly isotopic if and only if $P_0$ and $P_1$ belong to the same connected component of $\R^{d\times n}\backslash \Delta_{d,n}$.  

Let us introduce the following notation:
\be \label{eq:count}\#_{d,n}:=\#\{\textrm{isomorphism classes of geometric graphs on $n$ vertices in $\R^d$}\}.\ee
Notice that in the definition of $\#_{d,n}$ we did not assume the nondegeneracy of the graphs. However, the following lemma proves that isomorphism classes of \emph{nondegenerate} graphs are the same as all isomorphism classes as in \eqref{eq:count} (see also \cite[Lemma 2.2]{ALL} for an analogous statement in the more general context of geometric complexes).
\begin{lemma}\label{lem:broad}For every $P\in \Delta_{d,n}$ there exists $\widetilde{P}\in\mathbb{R}^{d\times n}\backslash \Delta_{d,n}$ such that $G(P)$ and $G(\widetilde{P})$ are isomorphic as labeled graphs.
\end{lemma}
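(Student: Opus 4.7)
The plan is to exhibit $\widetilde{P}$ as a small rescaling of $P$, specifically $\widetilde{P} = \lambda P$ for an appropriate $\lambda > 1$ very close to $1$. The key algebraic fact driving the argument is that such a rescaling transforms all squared distances uniformly: $\|\lambda p_i - \lambda p_j\|^2 = \lambda^2 \|p_i - p_j\|^2$.

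First, I would partition the $\binom{n}{2}$ index pairs $(i,j)$ according to the three possibilities $\|p_i - p_j\|^2 < 1$, $=1$, or $>1$; these determine the edges, the degeneracies, and the non-edges of $G(P)$ respectively. For any $\lambda > 1$, pairs in the third class satisfy $\lambda^2 \|p_i-p_j\|^2 > 1$ automatically, and pairs in the second (degenerate) class satisfy $\lambda^2 \|p_i-p_j\|^2 = \lambda^2 > 1$. So for any $\lambda>1$ these two classes together contribute exactly the non-edges of $G(P)$, with strict inequality, in $G(\widetilde{P})$.

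The only constraint is to preserve the edges, i.e.\ to ensure that each pair with $\|p_i-p_j\|^2 < 1$ still satisfies $\lambda^2 \|p_i-p_j\|^2 < 1$. If no such pair exists, any $\lambda>1$ works. Otherwise, setting
\[
c := \max\bigl\{\|p_i - p_j\|^2 \,:\, \|p_i - p_j\|^2 < 1\bigr\} \in [0,1),
\]
any $\lambda$ with $1 < \lambda < 1/\sqrt{c}$ (interpreted as any $\lambda>1$ if $c=0$) does the job, since finiteness of the collection of pairs guarantees the maximum is attained and strictly less than $1$. For such $\lambda$, all squared distances are strictly away from $1$, so $\widetilde{P} \notin \Delta_{d,n}$, and the edge set of $G(\widetilde{P})$ coincides with that of $G(P)$; the labeled graph isomorphism is the identity on vertex labels.

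There is essentially no obstacle here: the argument is a one-line use of the homogeneity of the defining equation $\|p_i-p_j\|^2 = 1$ under dilations, together with the fact that there are only finitely many pairwise distances to control. The only mild care required is to notice that $c<1$ strictly (because the set on which the max is taken is open in value), so a genuine $\lambda>1$ exists satisfying $\lambda^2 c < 1$.
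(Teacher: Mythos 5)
Your proof is correct and uses exactly the same idea as the paper: the paper's proof also takes $\widetilde{P}=(1+\epsilon)P$ for $\epsilon>0$ small, relying on the homogeneity of the squared distances under dilation. You simply make explicit the quantitative threshold $\lambda < 1/\sqrt{c}$ that the paper leaves implicit in the phrase ``for $\epsilon$ small enough.''
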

\begin{proof}Take $P=(p_1,\dots,p_n)\in\Delta_{d,n}$ and for $\epsilon>0$ small enough consider:
\be \widetilde{P}:=(1+\epsilon)P.\ee
Then for $\epsilon$ small enough we have:
\be ((i,p_i), (j, p_j))\in E(G(P)) \iff ((i,(1+\epsilon)p_i), (j, (1+\epsilon)p_j))\in E(G(\widetilde{P})),\ee
which proves that $G(P)$ and $G(\widetilde{P})$ are isomorphic as labeled graphs. Moreover, again for $\epsilon$ small enough, we have that $\widetilde{P}\notin \Delta_{d,n}.$
\end{proof}
By definition, for nondegenerate graphs we have \be\textrm{rigidly isotopic} \Longrightarrow\textrm{isomorphic}, \ee
this means that isomorphism classes are union of rigid isotopy classes. The number of rigid isotopy classes of geometric graphs on $n$ vertices in $\R^d$ is given $b_0(\R^{d\times n}\setminus\Delta_{d,n})$ and Lemma \ref{lem:broad} implies we can compare the number of rigid isotopy classes with the number of isomorphism classes:
\be \#_{d,n}\leq b_0(\R^{d\times n}\setminus\Delta_{d,n}).\ee
Below we will prove, as Corollary \ref{cor:conn1}, that for $d\geq n+1$, two $\R^d$--geometric graphs on $n$ vertices are isomorphic if and only if they are rigid isotopic, i.e. that
\be \#_{d,n}= b_0(\R^{d\times n}\setminus\Delta_{d,n})\quad \quad \textrm{for $d\geq n+1$}.\ee
We will deal with the asymptotic of $\#_{d, n}$ and $b_0(\R^{d\times n}\backslash \Delta_{d, n})$ in the case $d$ fixed and $n\to \infty$ in Section \ref{sec:n}.

\subsection{Alexander duality and the discriminant}\label{sec:aldu}
As we are interested in the topology of $\R^{d \times n} \setminus \Delta_{d, n}$, the topology of $\Delta_{d, n}$ should play an important role as well and in some case it will be easier to study. The key tool to connect the topology of the two is Alexander duality. Given a compact, locally contractible, nonempty and proper subspace $X$ of the $N$-dimensional sphere $S^N$, Alexander duality \cite[Corollary 3.45]{Hatcher} provides a way to study the topology of $X$ from the topology of $S^N \setminus X$. Namely for every $k$ we have the following isomorphisms between the homology of $X$ and the cohomology of $S^N \setminus X$
\[\tilde{H}_k(X) \cong \tilde{H}^{N - k - 1}(S^N \setminus X).\]

\begin{remark} If we are working with $\mathbb{Z}_2$ coefficients, as we will throughout, and with a space $X\subset S^N$ with finitely generated homology, we can relate the Betti numbers of $X$ with those of its complement in the sphere (i.e. we can freely identify homology and cohomology). When working with compact semialgebraic sets in the sphere, this last requirement will be satisfied thanks to \cite[Theorem 9.4.1]{BCR}.
\end{remark}
In order to use this duality in the present setting we work in the one-point compactification of $\Delta_{d, n} \subset \R^{d \times n}$ which we will denote by $\hat{\Delta}_{d, n} \subset S^{d \times n}$. Now $\hat{\Delta}_{d, n}$ contains the point at infinity so $S^{d \times n} \setminus \hat{\Delta}_{d, n}  = \R^{d \times n} \setminus \Delta_{d, n}$.

The discriminant itself is a union of quadratic hypersurfaces of the form
\be\label{eq:ij}\Delta_{d, n}^{i, j} = \{(x_1, ..., x_n) \in \R^{d \times n} \mid \|x_i - x_j\|^2 = 1\}.\ee Each of these quadrics is topologically $S^{d - 1} \times \R^{d \times (n - 1)}$ and establishing bounds on the top Betti number of $\hat{\Delta}_{d, n}$ establishes bounds on the number of rigid isotopy classes of $\R^d$-geometric graphs on $n$ vertices. We take such an approach in Section \ref{sec:upperboundb0}.

The discriminant itself is also interesting from the perspective of graph theory. One defines the unit-distance graph on $\R^d$ to be the graph whose vertex set is all of $\R^d$ and two vertices $x$ and $y$ are connected by an edge if and only if $\norm{x-y} = 1$. As a remark, the unit distance graph on $\R^d$, especially in the case $d = 2$, is studied in the case of the well known Hadwiger--Nelson problem to establish the chromatic number of the plane; that is the chromatic number of the unit distance graph on $\R^2$. 

In our situation $\Delta_{d, n}$ is connected to homomorphisms from graphs on $n$ vertices to the unit distance graph on $\R^d$. Indeed each quadric $\Delta_{d, n}^{i, j}$ is the space of images of homomorphisms from the graph on $[n]$ with an edge between vertex $i$ and vertex $j$. More generally, we denote an intersection of quadrics by $\Delta_{d, n}^G$ for a graph $G = ([n], E)$ by 
\be \label{eq:DG}\Delta_{d,n}^{G}:=\bigcap_{(i,j)\in E}\Delta_{d,n}^{(i,j)}.\ee
Then $\Delta_{d, n}^G$ is the space of images of homomorphisms from the graph $G$ to the unit distance graph on $\R^d$. Putting all of this together we have that $\Delta_{d, n}$ itself is the set of all points in $\R^{d \times n}$ that are the image of a graph homomorphism for a nonempty graph on $n$ vertices. 
\subsection{Semialgebraic triviality} A most useful technical tool that we will use in the paper is Theorem \ref{thm:sat}, which relates the structure of semialgebraic families and their homotopy.

\begin{definition}
Let $S,$ $T$ and $T'$ be semialgebraic sets, $T'\subset T$, and let
$f:S\to T$ be a continuous semialgebraic mapping. A semialgebraic trivialization
of $f$ over $T'$, with fibre $F$, is a semi-algebraic homeomorphism
$\theta:T'\times F\to f^{-1}(T')$, such that $f\circ\theta$ is the projection mapping $\pi:T'\times F\to T'$.
We say that the semialgebraic trivialization $\theta$ is compatible with a subset
$S'$ of $S$ if there is a subset $F'$ of F such that $\theta(T'\times F') = S'\cap f^{-1}(T')$.
\end{definition}

\begin{theorem}[Semialgebraic triviality]\label{thm:sat} Let $S$ and $T$ be two semialgebraic sets, $f:S\to T$ a semialgebraic mapping, $(S_j )_{j=1,\dots,q}$ a finite family of semialgebraic subsets of $S$. There exist a finite partition of $T$ into semialgebraic sets $T=\bigcup_
{l=1}^r T_l$ and, for
each $l$, a semialgebraic trivialization $\theta_l: T_l\times F_l\to f^{-1}(T_l)$ of $f$ over $T_l$ compatible with
$S_j$ , for $j = 1,\dots, q$, i.e, there exists $F_l^j\subset F_l$ such that $\theta_l(T_l\times F^j_l)= S_j\cap f^{-1}(T_l)$.
\end{theorem}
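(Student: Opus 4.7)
The plan is to prove Hardt's triviality theorem by combining cylindrical algebraic decomposition (CAD) with the Tarski--Seidenberg principle. First I would refine the given family: replace $\{S_j\}$ by the finer family of all Boolean intersections of the $S_j$ and their complements in $S$. This way the refined pieces partition $S$, and once a trivialization respects the resulting cell decomposition it is automatically compatible with each original $S_j$.

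Realizing $f$ as a semialgebraic map between semialgebraic subsets of affine spaces $\R^N \supset S$ and $\R^M \supset T$, I would then apply CAD to the finite collection of polynomials defining $S$, $T$, the graph of $f$, and each $S_j$. This produces a finite semialgebraic cell decomposition of $\R^{N+M}$ adapted to all these sets; projecting to $\R^M$ yields a finite partition $T = \bigsqcup_{l=1}^{r} T_l$ into semialgebraic cells, each homeomorphic to an open cube, and over each $T_l$ the preimage $f^{-1}(T_l)$ is cylindrically decomposed into ``bands'' bounded by continuous semialgebraic delineating functions $\xi_i : T_l \to \R$. Fix $T_l$, pick a base point $t_0 \in T_l$, and set $F_l = f^{-1}(t_0)$. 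I would construct $\theta_l : T_l \times F_l \to f^{-1}(T_l)$ coordinate by coordinate: in the first fiber coordinate, for each band and each $t \in T_l$, use the affine reparametrization that sends $[\xi_{i-1}(t_0), \xi_i(t_0)]$ linearly onto $[\xi_{i-1}(t), \xi_i(t)]$, and iterate through the tower of cylindrical coordinates. By construction $f \circ \theta_l$ equals the projection to $T_l$, $\theta_l$ preserves the cell decomposition (hence respects each $S_j$), and its inverse is obtained by swapping the roles of $t$ and $t_0$ in the same prescription, so $\theta_l$ is bijective.

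The principal obstacle is verifying that $\theta_l$ is a semialgebraic \emph{homeomorphism}, not merely a continuous bijection. Semialgebraicity follows from Tarski--Seidenberg since $\theta_l$ is defined by a first-order formula in the delineating functions $\xi_i$, which are themselves semialgebraic by CAD. Continuity of both $\theta_l$ and $\theta_l^{-1}$ relies on the delineability property of a CAD: on the open cell $T_l$ the functions $\xi_i$ are continuous and strictly ordered, so the affine reparametrizations vary continuously in $t$, and iteration through the cylindrical tower preserves continuity at each stage. The delicate point is that the $\xi_i$ may coalesce or blow up on $\partial T_l$, which is precisely why one cannot hope for a single global trivialization and must partition $T$ into the cells $T_l$. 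If one wishes to avoid invoking a full CAD, the same argument can be recast as a noetherian induction on $\dim T$: local triviality at a generic point of $T$ is established by the cylindrical construction above, and the inductive hypothesis applied to the closed semialgebraic locus where genericity fails completes the finite partition.
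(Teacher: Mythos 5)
The statement you are proving is Hardt's semialgebraic triviality theorem; the paper does not prove it but simply cites \cite[Theorem 9.3.2]{BCR}, so your proposal is an attempt at a from-scratch proof, and as written it has a genuine gap at exactly the point where Hardt's theorem is harder than cylindrical decomposition. Your fibre-preserving map is defined cell by cell: over a base cell $T_l$, the first fibre coordinate is handled by piecewise-affine matching of the stack $\xi_1<\dots<\xi_k$, and then you ``iterate through the tower.'' Inside a single open cylindrical cell this is fine, but $f^{-1}(T_l)$ (viewed in the graph of $f$) is a union of many cells of different dimensions, and you give no argument that the piecewise prescription is continuous where these cells meet. Delineability does not supply this: at level $j\geq 2$ the stack functions are continuous only on each open cell $D$ of the previous level, and as a point of a band $D$ approaches a lower-dimensional cell in its frontier the section functions over $D$ may merge, oscillate, or tend to $\pm\infty$, and the number of sections over the limiting cell may differ; a CAD need not satisfy the frontier condition, and its sections need not extend continuously to cell closures. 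Consequently the affine reparametrizations over $D$ need not converge to the ones prescribed over the boundary cell, so the glued map need not be continuous (nor a homeomorphism), even though each individual cell is trivialized. This gluing problem is precisely what the standard proofs (Hardt's original argument, \cite[Theorem 9.3.2]{BCR}, and the o-minimal version in van den Dries) spend most of their effort on, via induction on the fibre dimension with repartitioning and careful matching of stacks along frontiers; it is not a routine consequence of Tarski--Seidenberg plus continuity of the $\xi_i$ on open cells.

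Two smaller points. First, your closing remark that one could instead run a ``noetherian induction on $\dim T$'' does not address the difficulty: removing the bad locus in $T$ only refines the partition of the base, whereas the failure above occurs in the fibre direction over a fixed good cell $T_l$. Second, the Boolean refinement of the $S_j$ and the choice $F_l=f^{-1}(t_0)$ are fine, and compatibility with the $S_j$ would indeed follow automatically from a cell-respecting trivialization; the semialgebraicity of $\theta_l$ via quantifier elimination is also unproblematic once the map is actually well defined and continuous. So the missing ingredient is a proof that the cell-by-cell trivializations can be chosen (or corrected) so as to match continuously along cell frontiers; without that, the proposal does not establish the theorem, and for the purposes of this paper the honest route is the one taken in the text, namely invoking \cite[Theorem 9.3.2]{BCR}.
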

\begin{proof}This is \cite[Theorem 9.3.2]{BCR}.
\end{proof}

\begin{corollary} \label{coro:semtriv}Let $S$ be a semialgebraic set and $f:S → \R$ be a continuous semialgebraic function. Then for $\epsilon>0$ small enough the inclusion
\[\{f \geq \epsilon\}\xhookrightarrow{} \{f > 0\}\]
is an homotopy equivalence.
\end{corollary}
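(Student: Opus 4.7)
The plan is to apply Theorem \ref{thm:sat} to the restriction $f\colon \{f>0\}\to (0,+\infty)$, viewed as a continuous semialgebraic map between semialgebraic sets. This produces a finite partition of $(0,+\infty)$ into semialgebraic pieces $T_1,\ldots,T_r$ and, for each $l$, a semialgebraic trivialization $\theta_l\colon T_l\times F_l\to f^{-1}(T_l)$ with $f\circ \theta_l$ equal to the projection onto the first factor. Since each $T_l\subset \R$ is semialgebraic, it is a finite disjoint union of points and intervals; the collection of all endpoints of these intervals is therefore a finite subset of $\R_{\geq 0}$. Consequently there exists $a>0$ such that the whole interval $(0,a)$ is contained in a single piece $T_{l_0}$.

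Fix any $\epsilon\in (0,a)$. I would then construct an explicit strong deformation retraction of $\{f>0\}$ onto $\{f\geq \epsilon\}$ by leaving $f^{-1}([a,+\infty))$ unchanged and, on $f^{-1}((0,a))$, using the identification with $(0,a)\times F_{l_0}$ given by $\theta_{l_0}$ to push the first coordinate upward to level $\epsilon$. Concretely, I would set
\begin{equation}
R_t\bigl(\theta_{l_0}(s,x)\bigr)=\theta_{l_0}\bigl((1-t)s+t\max(s,\epsilon),\,x\bigr),\qquad t\in[0,1],
\end{equation}
and let $R_t$ be the identity on $f^{-1}([a,+\infty))$. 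Because $(1-t)s+t\max(s,\epsilon)=s$ for every $s\geq \epsilon$, the map $R_t$ fixes $\{f\geq \epsilon\}$ pointwise, and in particular the two pieces of the definition agree and glue continuously across the common boundary $\{f=a\}$. Then $R_0$ is the identity on $\{f>0\}$, $R_1$ takes values in $\{f\geq \epsilon\}$, and every $R_t$ fixes $\{f\geq \epsilon\}$, so the inclusion $\{f\geq \epsilon\}\hookrightarrow \{f>0\}$ is a strong deformation retract and hence a homotopy equivalence.

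The only point that requires genuine care is the first one: confirming that the partition produced by Theorem \ref{thm:sat} contains an honest open interval of the form $(0,a)$ inside one of its pieces. This reduces to the elementary fact that a semialgebraic subset of $\R$ is a finite union of points and intervals; once this is granted, the rest of the argument is a routine gluing, made possible because the trivialization $\theta_{l_0}$ lets one deform in the fiber direction without ever touching the part of $\{f>0\}$ that lies above the first nontrivial stratum endpoint.
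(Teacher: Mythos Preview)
Your proof is correct and follows essentially the same strategy as the paper: apply semialgebraic triviality to obtain an interval $(0,a)$ (the paper takes $(0,\epsilon]$) lying in a single piece of the partition, then use the trivialization $\theta_{l_0}$ to push the level sets upward and produce a deformation retraction onto $\{f\geq\epsilon\}$. The only cosmetic differences are your use of $\max(s,\epsilon)$ in the homotopy formula and your phrasing of the gluing ``across $\{f=a\}$''; for the latter it is cleaner to invoke the pasting lemma on the closed cover $f^{-1}((0,\epsilon])\cup f^{-1}([\epsilon,+\infty))$ (on both pieces $R_t$ is continuous and they agree on $f^{-1}(\epsilon)$), but your observation that $R_t$ is the identity on all of $f^{-1}([\epsilon,+\infty))$ already makes this work.
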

\begin{proof}
Thanks to semialgebraic triviality, we know that for $\epsilon$ sufficiently small there exists $T_l$ such that $(0,\epsilon]\subset T_l$. Then, we define a map \[H:\{f>0\}\times[0,1]\to \{f>0\}\]
\[H(x,t)=\begin{cases} x & \textrm{if } f(x)\not\in (0,\epsilon) \\\theta_l((1-t)\cdot(\pi_1\circ\theta_l^{-1})+t\epsilon,\pi_2\circ\theta_l^{-1}) & \textrm{if } f(x)\in (0,\epsilon]\end{cases}\]
and this is a continuous function because the two expressions agree on $f^{-1}(\epsilon)\times[0,1]$ and both of them are continuous on closed subsets.
The map $H$ is therefore a deformation retraction of $\{f>0\}$ onto $f\geq\epsilon$.
\end{proof}

\subsection{Systems of quadratic inequalities }\label{sec:spectral}

In this section we recall a general construction from \cite{Agrachev, AgrachevLerario} for computing the Betti numbers of the set of solutions of a system of quadratic inequalities.

To start with, let $h:\R^{N+1}\to \R^{k+1}$ be a \emph{quadratic map}, i.e. a map whose components $h=(h_0, \ldots, h_k)$ are homogeneous quadratic forms. Let also $K\subseteq \R^{k+1}$ be a closed convex polyhedral cone (centered at the origin). We are interested in the Betti numbers of
\be\label{eq:V} V=h^{-1}(K)\cap S^{N}\subset \R^{N+1}\ee
Such a set $V$ can be seen as the set of solutions of a system of homogeneous quadratic inequalities on the sphere $S^N$: in fact, since $K$ is polyhedral, we have
\be K=\{\eta_1\leq 0, \ldots, \eta_\ell\leq 0\}\ee
for some linear forms $\eta_1, \ldots, \eta_\ell\in (\R^{k+1})^*$ and 
\be V=\{\eta_1 h\leq 0, \ldots, \eta_\ell h\leq 0\}\cap S^N,\ee
which is a system of quadratic inequalities. (Here given a linear form $\eta \in (\R^{k+1})^*$ and a quadratic map $h:\R^{N+1}\to \R^{k+1}$ we simply denote by $\eta h$ the composition of the two). Every homogeneous system can be written in this way. 

We denote by $K^\circ$ the polar of $K$, i.e. $K^{\circ}=\{\eta\in (\R^{k+1})^*\,|\, \eta(y)\leq 0\quad \forall y\in K\}$, and we set:
\be \Omega=K^{\circ}\cap S^k,\ee
where $S^k$ denotes the unit sphere in $(\R^{k+1})^*$, with respect to a fixed scalar product. The scalar product on $\R^{k+1}$ plays no role, but we will also use a scalar product on $\R^{N+1}$, by choosing a positive definite quadratic form $g$ on $\R^{N+1}$. This scalar product will play a role, and we denote it by $\langle \cdot, \cdot\rangle_{g}$; it is defined by $\langle x, x\rangle_g=g(x)$ for all $x\in \R^{N+1}$; for practical purposes we will omit the ``$g$'' subscripts when not needed.

Once the scalar product on $\R^{N+1}$ has been fixed, we can associate to a quadratic form $q:\R^{N+1}\to \R$ a real symmetric matrix, via the equation:
\be\label{eq:polarization} q(x)=\langle x, Qx \rangle_g\quad \forall x\in \R^{N+1}.\ee

We will often use small letters for the quadratic form and capital letters for the associated matrices.

Accordingly we can define the eigenvalues of $q$ (with respect to $g$) as those of $Q$:
\be \lambda_1(q)\geq \cdots \geq\lambda_{N+1}(q).\ee

The eigenvalues (and the eigenvectors) of $q$ depend therefore on the chosen scalar product, but again we will omit this dependence in the notation if not needed.

\subsubsection{The index function}
Using the above notation, we will denote by $\textrm{ind}^{+}(q)=\textrm{ind}^{+}(Q)$ the positive inertia index, i.e. the number of positive eigenvalues of the symmetric matrix $Q$. Note that the index of a quadratic form \emph{does not} depend on the chosen scalar product.

When we are in the situation as above, i.e. when we are given a homogeneous quadratic map $h:\R^{N+1}\to \R^{k+1}$, for every covector $\eta\in (\R^{k+1})^*$ we can consider the composition $\eta h$, which is a quadratic form. For every natural number $ j\geq 0$ we define the sets:
\be \Omega^j=\{\omega\in \Omega\,|\, \mathrm{ind}^+(\omega h)\geq j\}.\ee
These sets are open and semialgebraic, as it is easily verified. Moreover, these sets \emph{do not} depend on the choice of the scalar product $g$.

Observe now that over each set $\Omega^{j}\backslash \Omega^{j+1}$ the function $\textrm{ind}^+\equiv j$ is constant, i.e. the number of positive eigenvalues of the corresponding matrices is $j$ and there exists a natural vector bundle $P^{j}\subseteq \Omega^{j}\backslash \Omega^{j+1}\times \R^{N+1}$
\be \label{eq:bundle}
\begin{tikzcd}
\mathbb{R}^j \arrow[r, hook] & P^j \arrow[d] \\
                             &   \Omega^{j}\backslash \Omega^{j+1}        
\end{tikzcd}
\ee
whose fiber over a point $\omega$ is the positive eigenspace of $\omega H=\omega_0H_0+\cdots +\omega_kH_k.$ In fact this bundle is the restriction of a more general bundle over the set
\be D_j=\{\omega\,|\, \lambda_j(\omega H)\neq \lambda_{j+1}(\omega H)\}\ee
i.e. the set where the $j$-th eigenvalue of $\omega H$ is distinct from the $(j+1)$-th. We still denote this bundle by $P_j\subset D_{j}\times \R^{N+1}$: 
\be \label{eq:bundle2}
\begin{tikzcd}
\mathbb{R}^j \arrow[r, hook] & P^j \arrow[d] \\
                             &   D_j                
\end{tikzcd}
\ee 
Here the fiber over a point $\omega\in D_{j}$ consists of the eigenspace of $\omega H$ associated to the first $j$ eigenvalues (this is well defined); however note that the bundle over $D_j$ \emph{depends} on the choice of the scalar product (since $D_j$ itself depends on this choice).

We denote by
\be \label{eq:sw}\nu_j\in H^{1}(D_j)\ee
the first Stiefel-Whitney class of this bundle. The following Lemma will be useful for us: \begin{lemma}\label{lem:makesense}The cup-product with the class $\nu_j$ defines a map:
\be\label{eq:smile} (\cdot)\smallsmile \nu_j:H^*(\Omega ^{j}, \Omega^{j+1})\to H^{*+1}(\Omega ^{j}, \Omega^{j+1}).\ee
\end{lemma}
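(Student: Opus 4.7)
The issue that needs to be addressed is that $\nu_j$ lives a priori in $H^1(D_j)$, and $D_j$ need not contain all of $\Omega^j$, so there is no immediate definition of a cup product on $H^*(\Omega^j, \Omega^{j+1})$. My plan is to reduce to a pair on which $\nu_j$ is globally defined by excising away the ``bad'' subset where $D_j$ fails, and then use the standard cup product of relative cohomology.

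First, I would record two geometric observations about how $\Omega^j$, $\Omega^{j+1}$ and $D_j$ interact. (i) The subset $\Omega^j\setminus \Omega^{j+1}$, on which $\mathrm{ind}^+(\omega h)=j$ exactly, is contained in $D_j$: there $\lambda_j(\omega h)>0\geq \lambda_{j+1}(\omega h)$, so in particular $\lambda_j\neq \lambda_{j+1}$. (ii) The complementary set $Z:=\Omega^j\setminus D_j$ is closed in $\Omega^j$, since $D_j$ is open (the functions $\omega\mapsto \lambda_j(\omega h)$ and $\omega\mapsto \lambda_{j+1}(\omega h)$ depend continuously on $\omega$); and $Z$ is moreover contained in $\Omega^{j+1}$, because on $Z$ the equality $\lambda_j(\omega h)=\lambda_{j+1}(\omega h)$ holds, and $\lambda_j>0$ by $\omega\in \Omega^j$, forcing $\lambda_{j+1}>0$ and hence $\mathrm{ind}^+(\omega h)\geq j+1$.

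By (ii), the closed set $Z$ sits inside the open subset $\Omega^{j+1}$ of $\Omega^j$, so excision yields an isomorphism
\[
H^*(\Omega^j,\Omega^{j+1})\;\xrightarrow{\;\cong\;}\;H^*\!\left(\Omega^j\cap D_j,\,\Omega^{j+1}\cap D_j\right).
\]
On the right--hand side, the class $\nu_j\in H^1(D_j)$ restricts to a well-defined class in $H^1(\Omega^j\cap D_j)$, and the standard cup product pairing of relative cohomology gives a map
\[
H^*(\Omega^j\cap D_j,\,\Omega^{j+1}\cap D_j)\;\xrightarrow{\;\smallsmile\,\nu_j\;}\;H^{*+1}(\Omega^j\cap D_j,\,\Omega^{j+1}\cap D_j).
\]
Transporting this through the excision isomorphism defines the operator $\smallsmile\nu_j$ on $H^*(\Omega^j,\Omega^{j+1})$ that the lemma asserts.

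The only non-formal content is the pair of observations (i)--(ii) identifying where $D_j$ sits relative to the filtration $\Omega^{j+1}\subset \Omega^j$, and this is essentially an eigenvalue bookkeeping argument. Everything else is a direct application of excision and the naturality of the cup product pairing, so I do not anticipate any serious technical obstacle.
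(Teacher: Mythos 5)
Your proof is correct and follows essentially the same route as the paper's: both arguments rest on the observation that $\Omega^j\setminus\Omega^{j+1}\subseteq D_j$ (equivalently, that $\Omega^j\setminus D_j\subseteq\Omega^{j+1}$), use excision to identify $H^*(\Omega^j,\Omega^{j+1})$ with $H^*(\Omega^j\cap D_j,\Omega^{j+1}\cap D_j)$, and then cup with the restriction of $\nu_j$; the paper phrases the excision via the open cover $\Omega^j=\Omega^{j+1}\cup(\Omega^j\cap D_j)$ and spells out the relative cup product at the cochain level, but these are cosmetic differences.
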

\begin{proof}To see that the previous cup product is well defined observe that we can write:
\be \Omega^{j}=A\cup B, \quad A=\Omega^{j+1}, \quad B=\Omega^{j}\cap D_j.\ee
In fact, if a point $\omega$ belongs to $\Omega^{j}$ then, either $\omega \in \Omega^{j+1}$ or $\mathrm{ind}^{+}(\omega H)=j$ and consequently $\omega \in \Omega^{j}\cap D_{j}.$ Since both $A$ and $B$ are open, by excision we get:
\be H^*(\Omega^{j}, \Omega^{j+1})\simeq H^*(\Omega^{j}\cap D_{j}, \Omega^{j+1}\cap D_{j}).
\ee
In particular, in order to see that \eqref{eq:smile} is well defined, it is enough to see that 
\be (\cdot)\smallsmile \nu_j:H^*(\Omega ^{j}\cap D_{j}, \Omega^{j+1}\cap D_{j})\to H^{*+1}(\Omega ^{j}\cap D_{j}, \Omega^{j+1}\cap D_{j})\ee
is well defined. 
Let $\psi\in C^{k}(\Omega^{j})$ a singular cochain representing a cohomology class in $H^{k}(\Omega ^{j}\cap D_{j}, \Omega^{j+1}\cap D_{j})$ and $\phi_j\in C^1(D_{j})$ a cochain representing $\nu_j.$ The cup product of $\psi$ and $\phi_j$ is defined on a singular chain $\sigma:[v_0, \ldots , v_{k + 1}]\to \Omega^{j}\cap D_{j}$ in the usual way:
\be (\psi\smallsmile \phi_j)(\sigma)=\psi(\sigma|_{[v_0, \ldots, v_k]})\phi_j(\sigma|_{[v_k, v_{k+1}]}),\ee
from which we see that $\psi\smallsmile \phi_j$ vanishes on $C_{k+1}(\Omega^{j+1}\cap D_j)$ and defines an element of  $H^{k+1}(\Omega ^{j}\cap D_{j}, \Omega^{j+1}\cap D_{j})$.
\end{proof}
Since we have inclusions $\Omega^{j}\supseteq \Omega^{j+1}\supseteq \Omega^{j+2}$, we also consider the connecting homomophisms
\be \partial:H^{*}(\Omega^{j+1} , \Omega^{j+2} )\to H^{*+1}(\Omega^{j} , \Omega^{j+1} )\ee
of the long exact sequence for the triple $(\Omega^{j} ,\Omega^{j+1} , \Omega^{j+2} ).$

We summarize the directions of these homomorphism in the following \emph{non-commutative} diagram of maps:
\be 
\begin{tikzcd}
{H^i(\Omega^{j+1} , \Omega^{j+2} )} \arrow[rr, "\partial"] \arrow[dd, "(\cdot)\smallsmile \nu_{j+1} "'] &  & {H^{i+1}(\Omega^{j} , \Omega^{j+1} )} \arrow[dd, "(\cdot)\smallsmile \nu_{j} "] \\
                                                                                                                                   &  &                                                                                                            \\
{H^{i+1}(\Omega^{j+1} , \Omega^{j+2} )} \arrow[rr, "\partial"]                                                   &  & {H^{i+2}(\Omega^{j} , \Omega^{j+1} )}                                                   
\end{tikzcd}
\ee

\begin{remark}
\label{rem:delta} Let $\partial:H^{i}(X,Y)\to H^{i+1}(Z,X)$ be the boundary operator in the exact sequence of the triple $(Z,X,Y)$, where all spaces are open. Following \cite[pag. 201]{Hatcher}, thanks to the fact that we are working with $\mathbb{Z}_2$ coefficients, we have that $\partial([\phi])=[\phi\circ\pi\circ\delta]$ where $\delta:C_{i+1}(Z,X)\to C_{i}(X)$ is the boundary operator and $\pi: C_{i}(X)\to  C_{i}(X,Y)$ is the projection operator. Let us also consider $\tilde{X}\subset A$ both open and such that $(X\cap\tilde{X})\cup Y$ is open. If we take the relative cup product
\[H^i(X,Y)\times H^1(\tilde{X})\to H^{1}(X,Y)\]
as defined in Lemma \ref{lem:makesense}, then this coincides with the cup product 
\[H^i(X,Y)\times H^1(A)\to H^{1}(X,Y)\]
as defined in Lemma \ref{lem:makesense}, meaning that given $a\in H^i(X,Y)$ and $b\in H^1(A)$ then $a\smallsmile b= a\smallsmile r^*(b)$ where $r^*$ is just the restriction. In the same way, if we suppose that $\tilde{Z}\subset A$, we can repeat a similar reasoning for the cup product $H^i(Z,X)\times H^1(\tilde{Z})\to H^{i+1}(Z,X)$.
Now, given $[\gamma]\in H^1(A)$, we claim that 
\be\label{eq:cc}\partial([a]\smallsmile [\gamma])=\partial [a]\smallsmile [\gamma].\ee
At the level of cochains if we take $c_{i+1}\in C_{i+1}(Z,X)$ a singular chain we have $\partial(a\smallsmile \gamma)(c_{i+1})=(a\smallsmile \gamma)(\pi\circ\delta c_{i+1})$. Reasoning as in the proof of Lemma 3.6 in \cite[pag. 206]{Hatcher}, we see that $(a\smallsmile \gamma)(\pi\circ\delta c_{i+1})=\partial a\smallsmile \gamma+(-1)^i a\smallsmile \delta^*(\gamma)$ and \eqref{eq:cc} follows from $\delta^*(\gamma)=0$.
\end{remark}

\subsubsection{The spectral sequence}
For the computation of the Betti numbers of $V$, defined in \eqref{eq:V} we will need the following result, which is an adaptation from \cite{Agrachev, AgrachevLerario}. Clearly the computation of the cohomology of $V$ is equivalent to that of $S^{N}\backslash V$, by Alexander duality, and in \cite{Agrachev, AgrachevLerario} it is introduced a spectral sequence for computing the latter.

The delicate part here is that in \cite{Agrachev} the spectral sequence is defined for \emph{nondegenerate} systems of quadrics, i.e. for systems such that the map $h$ is transversal to the cone $K$, in the sense of \cite{AgrachevLerario}; in \cite{lerario2011homology} the spectral sequence is defined also for degenerate systems, but the second differential is not computed explicitly, and in \cite{AgrachevLerario} it is defined also for degenerate systems, and the second differential is explicitly computed, but the solutions are studied in the projective space rather than the sphere. Since in our case the system of quadratic inequalities might be degenerate, we will need to prove the existence of such a spectral sequence and to compute its second differential. 

\begin{theorem}\label{thm:spectralbasis}Let $V=h^{-1}(K)\cap S^{N}$ be defined by a system of quadratic inequalities, as above. There exists a cohomology spectral sequence $(E_r, d_r)_{r\geq 1}$ converging to $H^{*}(S^N\backslash V;\mathbb{Z}_2)$ such that:
\begin{enumerate}
\item 
the second page of the spectral sequence is given, for $j>0$, by
\be E_2^{i,j}=
 H^i(\Omega^{j+1}, \Omega^{j+2};\mathbb{Z}_2).\ee
 For $j=0$, the elements of the second page of the spectral sequence fit into a long exact sequence:
\be\label{eq:exact} \cdots \rightarrow H^i(\Omega^{1};\mathbb{Z}_2)\rightarrow E_{2}^{i, 0}\rightarrow H^i(\Omega^{1} , \Omega^{2} ;\mathbb{Z}_2)\stackrel{(\cdot) \smile \nu_1 }{\longrightarrow} H^{i+1}(\Omega^{1} ;\mathbb{Z}_2)\rightarrow\cdots.\ee
\item for $j\geq{1}$ the second differential $d_2^{i,j} :H^i(\Omega^{j+1} , \Omega^{j+2} )\to H^{i+2}(\Omega^{j} , \Omega^{j+1} )$ is given by 
\be d_2^{i,j} \xi=\partial(\xi\smallsmile \nu_{j+1} )+\partial \xi\smallsmile \nu_{j} .\ee
\end{enumerate} 
\end{theorem}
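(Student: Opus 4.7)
The plan is to follow the strategy of \cite{Agrachev, AgrachevLerario}, adapted to the case where the system of quadratic inequalities may be degenerate and where we study solutions on the sphere rather than in projective space. The key device is an auxiliary space that resolves $S^N\setminus V$ and which carries a natural filtration by the positive inertia index of $\omega h$. Concretely, I would introduce
\[
\mathcal{B}=\{(\omega,x)\in \Omega\times S^N\,|\, (\omega h)(x)\geq 0\}
\]
with its two projections $\pi_1:\mathcal{B}\to S^N$ and $\pi_2:\mathcal{B}\to \Omega$. By the separation theorem for convex sets, $x\notin V$ if and only if there exists some $\omega\in \Omega$ with $(\omega h)(x)>0$; moreover, over any such $x$ the fiber of $\pi_1$ is a convex (hence contractible) subset of $\Omega$. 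Combining this with semialgebraic triviality (Theorem \ref{thm:sat} and Corollary \ref{coro:semtriv}), one shows that, after restricting away from the locus lying over $V$, the projection $\pi_1$ is a homotopy equivalence onto $S^N\setminus V$, giving $H^*(\mathcal{B}';\mathbb{Z}_2)\cong H^*(S^N\setminus V;\mathbb{Z}_2)$ for the relevant open part $\mathcal{B}'$.

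The spectral sequence then arises from the filtration of $\mathcal{B}'$ induced by $\pi_2$ together with the filtration $\Omega\supseteq \Omega^1\supseteq \Omega^2\supseteq\cdots$. Over a point $\omega\in \Omega$ with $\mathrm{ind}^+(\omega h)=j$, the fiber of $\pi_2$ deformation retracts onto the unit sphere of the positive eigenspace of $\omega H$, which is a $(j-1)$-sphere, and this is exactly where the bundles $P^j\to \Omega^j\setminus \Omega^{j+1}$ of \eqref{eq:bundle} enter: applying a Thom-isomorphism/Leray argument stratum by stratum, one identifies the $E_2$-terms with $H^i(\Omega^{j+1},\Omega^{j+2};\mathbb{Z}_2)$ for $j\geq 1$. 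The row $j=0$ is exceptional, since the corresponding fiber degenerates to a point (or is empty), and its behaviour is governed by the long exact sequence of the pair $(\Omega,\Omega^1)$; chasing this sequence together with the Thom class of $P^1$ yields the long exact sequence \eqref{eq:exact} for $E_2^{i,0}$.

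The main obstacle is to compute the second differential $d_2$. Here I would adapt \cite{AgrachevLerario}: the class $d_2\xi$ measures the failure of a relative class to extend two steps down the filtration, and this obstruction decomposes into two geometric contributions. The first comes from the change of the positive eigenbundle when one crosses from the open stratum over $\Omega^{j+1}\setminus \Omega^{j+2}$ down into $\Omega^j\setminus \Omega^{j+1}$, producing the term $\partial(\xi\smallsmile\nu_{j+1})$, with $\nu_{j+1}$ the first Stiefel--Whitney class of $P^{j+1}$; the second is the direct connecting homomorphism of the triple $(\Omega^j,\Omega^{j+1},\Omega^{j+2})$ composed with the cup product with $\nu_j$, producing $\partial\xi\smallsmile\nu_j$. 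The identity $d_2^{i,j}\xi=\partial(\xi\smallsmile\nu_{j+1})+\partial\xi\smallsmile\nu_j$ would then follow from a careful chase in the relative exact sequences together with formula \eqref{eq:cc} of Remark \ref{rem:delta}, which relates the boundary map to cup products by classes pulled back from an ambient open set. The hard part, which is genuinely new with respect to \cite{Agrachev}, is that in the degenerate setting the strata $\Omega^j\setminus\Omega^{j+1}$ need not be smooth manifolds and the eigenbundles $P^j$ are only a priori defined over the open sets $D_j$; overcoming this requires using Corollary \ref{coro:semtriv} to replace $\Omega^j$ by slightly smaller, well-behaved semialgebraic neighborhoods inside $D_j$, and then verifying that all the relevant excisions and Mayer--Vietoris arguments remain valid in this generality so that the spectral sequence is both well-defined and convergent.
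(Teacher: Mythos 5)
Your overall architecture (the incidence set $\mathcal{B}\subset\Omega\times S^N$, the homotopy equivalence of its relevant open part with $S^N\setminus V$, the Leray-type spectral sequence of the projection to $\Omega$ filtered by the index function, and the role of the Stiefel--Whitney classes $\nu_j$ in $d_2$) matches the framework of \cite{Agrachev, AgrachevLerario} that the paper also uses. But there is a genuine gap exactly at the point you flag as ``the hard part'': the degenerate case. Your stratum-by-stratum identification of $E_2$ rests on the claim that the fiber of $\pi_2$ over a point $\omega$ with $\mathrm{ind}^+(\omega h)=j$ deformation retracts onto the $(j-1)$-sphere of the positive eigenspace. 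This is false when $\omega h$ has a nontrivial kernel: the set $\{x\in S^N\,:\,\omega h(x)\geq 0\}$ retracts onto the unit sphere of the span of the eigenvectors with \emph{nonnegative} eigenvalue, whose dimension exceeds $j-1$ whenever $\lambda_{j+1}(\omega h)=0$. Your proposed remedy --- shrinking $\Omega^j$ to well-behaved semialgebraic neighborhoods inside $D_j$ --- does not address this, because the defect lies in the fibers rather than in the regularity of the base strata, and because $D_j=\{\lambda_j\neq\lambda_{j+1}\}$ does not exclude zero eigenvalues below the $j$-th. Run naively, your construction would produce the relative cohomology of the sets $\{\omega\,:\,\mathrm{ind}^{\geq 0}(\omega h)\geq j\}$ rather than of the sets $\Omega^j$ defined by the strictly positive index.

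The paper's proof resolves this with a device your proposal omits: regularization in an auxiliary parameter. One fixes a positive definite form $q_0$ (as in \cite[Lemma 13]{AgrachevLerario}) and replaces $\mathcal{B}$ by $B(t)=\{(\omega,x)\,:\,\omega h(x)-tq_0(x)\geq 0\}$ for $t>0$; each $\omega h-tq_0$ is then nondegenerate, so the results of \cite{agrachev1988homology, AgrachevLerario} apply to give the Leray spectral sequence of $p_1|_{B(t)}$, its $E_2$ page in terms of the sets $\Omega_k[t]=\{\omega\,:\,i^-(\omega h - tq_0)\leq k\}$, and the explicit second differential. Semialgebraic triviality shows that $B(t)$ is homotopy equivalent to $B=\{\omega h(x)>0\}\sim S^N\setminus V$ for small $t$ and that the inverse system of spectral sequences over $t$ stabilizes; the inverse limit as $t\to 0^+$ converts the sets $\Omega_k[t]$ into the sets $\Omega^j$, and the formula for $d_2$ is transported through a diagram of excision-type inclusions together with Remark \ref{rem:delta}. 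Without this limiting step (or an equivalent substitute) your argument establishes neither the stated form of $E_2$ nor the formula for $d_2$ in the degenerate case, which is precisely the case the theorem needs to add to the literature.
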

\begin{proof}The proof proceeds similar to \cite[Theorem 25 and Theorem 28]{AgrachevLerario}, using a regularization process and taking the limit over the regularizing parameter. More precisely, let $q_0$ be a positive definite quadratic form, chosen as in \cite[Lemma 13]{AgrachevLerario}, and for $t>0$ consider the set:
\be B(t)=\{(\omega, x)\in \Omega\times S^N\,|\, \omega h(x)-tq_0(x)\geq 0\}.\ee
The choice of $q_0$ as in \cite[Lemma 13]{AgrachevLerario} makes the map $\omega\mapsto \omega h-tq_0$ nondegenerate with respect to $K$ and will allow to compute the second differential of our spectral sequence.
By semialgebraic trivilality, for $t>0$ small enough the set $B(t)$ is homotopy equivalent to
\be B=\{(\omega, x)\in \Omega\times S^N\,|\, \omega h(x)>0\}.\ee Moreover the projection on the second factor (i.e. $p_2:\Omega\times S^n\to S^N$) restricts to a homotopy equivalence $B\sim p_2(B)=S^{N}\backslash V$, see \cite[Section 3.2]{lerario2011homology}. Therefore, for $t>0$ small enough 
\be H^*(S^{N}\backslash V)\simeq H^*(B(t)).
\ee

We consider now the Leray spectral sequence $(E_r[t], d_r[t])_{r\geq 0}$ of the map 
\be p_{t}:=p_1|_{B(t)}:B(t)\to \Omega.\ee
This spectral sequence converges to the cohomology of $B(t),$ 

  For the first part of the statement, the structure of  $E_2^{i,j} $ in the case $j>0$ is proved in \cite[Section 3.2]{lerario2011homology}, as follows.
If $t_1<t_2$ then $B(t_2)\xhookrightarrow{} B(t_1)$ is an homotopy equivalence and $p_{t_1}\vert_{B(t_2)}=p_{t_2}$. For $0<t_1<t_2<\delta$ the inclusion defines a morphism of filtered differential graded modules \[\phi_0(t_1,t_2):(E_0 [t_1],d_0 [t_1])\to(E_0 [t_2],d_0 [t_2])\] turning $\{E_0 [t]\}_{t}$ into an inverse system and thus $\{(E_r [t],d_r [t])\}_t$ into an inverse system of spectral sequences. Then, we can define a new spectral sequence \[(E_r ,d_r ):=\varprojlim_t\{(E_r [t],d_r [t])\}.\] 

The proof shows that for $j>0$ we have $E_2^{i,j} [t]=H^i (\Omega_{n−j} [t], \Omega_{n−j−1} [t];\mathbb{Z}_2)$ where the sets $\Omega_{k}[t]$ are defined by:
\[\Omega_k [t]:=\{w\in\Omega\mid\;i^{-}(w\cdot h-tg)\leq k\}.\] 
Moreover we also have that for $j>0$ the isomorphism $\phi_2(t_1,t_2)$ is just the homomorphism induced in cohomology  by the inclusion $\Omega_j [t_2]\subseteq\Omega_j [t_1]$ and that
\[E_{2}^{i,j} =\varprojlim_{t} E_{2}^{i,j} [t]=H^{i}(\Omega^{j} ,\Omega^{j+1} ;\mathbb{Z}_2).\]
Thanks to this, by semialgebraic triviality $\phi_2(t_1,t_2)$ is an isomorphism for $0<t_1<t_2<\delta$ with $\delta$ sufficiently small and therefore also $\phi_{\infty}(t_1,t_2)$ is an isomorphism, assuring the convergence of $(E_{r} ,d_r )$ to $B(t)$ (see also the proof of \cite[Theorem 25]{AgrachevLerario} for more details on this point).
Let us call \[e_{t}^*:H^{*}(\Omega^{j},\Omega^{j+1};\mathbb{Z}_2)\to H^*(\Omega_{n−j}(t), \Omega_{n−j−1}(t);\mathbb{Z}_2)\] the isomorphism induced by the inclusion.
From now on we choose our scalar product on $\R^{N+1}$ to be $g=q_0$, in such a way that the  matrix associated to $q_0$ through the polarization identity \eqref{eq:polarization} is the identity matrix.

For the case $j=0$ we know thanks to \cite{agrachev1988homology} that there exist a long exact sequence
\begin{align} \cdots \rightarrow H^i(\Omega_{n-1} [t];\mathbb{Z}_2)\rightarrow E_{2}^{i, 0} [t]\rightarrow\\\rightarrow H^i(\Omega_{n-1} [t], \Omega_{n-2} [t]&;\mathbb{Z}_2)\stackrel{(\cdot) \smile \nu_1 }{\longrightarrow} H^{i+1}(\Omega_{n-1} [t];\mathbb{Z}_2)\rightarrow\cdots.\end{align}
We can pass to the inverse limit of these long exact sequences respect to $t$ in the obvious way obtaining a long exact sequence\footnote{In this long exact sequence we are still using $\nu_1$ because we chose our scalar product to be $g_0$. Same for the definition of $d_2(t)$, where we used $\nu_j$.}
\be\cdots \rightarrow H^i(\Omega^{1} ;\mathbb{Z}_2)\rightarrow E_{2}^{i,0} \rightarrow H^i(\Omega^{1} , \Omega^{2} ;\mathbb{Z}_2)\stackrel{(\cdot) \smile \nu_1 }{\longrightarrow} H^{i+1}(\Omega^{1} ;\mathbb{Z}_2)\rightarrow\cdots,\ee
where we have used the fact that $(e^{*})^{-1}\circ((\cdot)\smallsmile \nu_j )\circ e^*=(\cdot)\smallsmile \nu_j $ (we will get back to this point later).

This proves point (1) of the statement. For what concerns the differential, thanks to \cite[Theorem 3]{agrachev1988homology} we know that the second differential $d_2 [t]$ of the spectral sequence $(E_r [t],d_r [t])$ with \[d_2^{i,j} [t]: H^i (\Omega_{n−j-1} [t], \Omega_{n−j−2} [t];\mathbb{Z}_2)\to H^{i+2} (\Omega_{n−j} [t], \Omega_{n−j-1} [t];\mathbb{Z}_2)\] has the form 
\[d_2^{i,j} [t]\xi=\partial_{t}(i_{t}^*)^{-1}( i_{t}^*\xi\smallsmile \nu_{j+1} )+(i_{t}^*)^{-1}(i_{t}^*\partial_{t}\xi\smallsmile \nu_{j} ),\]
where \[\partial_{t}:H^i (\Omega_{n−j-1} [t], \Omega_{n−j−2} [t];\mathbb{Z}_2)\to H^{i+1} (\Omega_{n−j} [t], \Omega_{n−j-1} [t];\mathbb{Z}_2)\] is the connecting homomorphism in the exact sequence of the triple $(\Omega_{n−j} [t], \Omega_{n−j-1} [t],$ $\Omega_{n−j-2} [t])$ and the map \[i_t^*:H^i(\Omega_{n−j} [t], \Omega_{n−j−1} [t];\mathbb{Z}_2)\to H^i(\Omega_{n−j} [t]\cap\mathcal{D}_j , \Omega_{n−j−1}(t)\cap\mathcal{D}_j ;\mathbb{Z}_2)\] is the map induced by the inclusion; this map is an isomorphism by excision.

The second differential for $j>1$ of our new spectral sequence $(E^{i,j}_r ,d_r )$ is given by $d_2^{i,j}:=(e^*_{t})^{-1}\circ d_2^{i,j}(t)\circ e^*_{t}. $ More explicitly,
\[ d_2^{i,j}=\partial(i_{t}^*\circ e^*_{t})^{-1}((i_{t}^*\circ e^*_{t})\xi\smallsmile \nu_{j+1} )+(i_{t}^*\circ e_{t}^*)^{-1}( (i_{t}^*\circ e_{t}^*)\partial\xi\smallsmile \nu_{j} ) \]
thanks to the naturality of the connecting homomorphism.

Let us now consider the following diagram where all the maps are inclusions:
\[\begin{tikzcd}
                                                              & {(\Omega^j ,\Omega^{j+1} )} \\
                                                              &                                                                                             &                                                                            \\
(\Omega_{n−j-1} [t]\cap\mathcal{D}_j , \Omega_{n−j−2} [t]\cap\mathcal{D}_j ) \arrow[rr, "j_t"] \arrow[ruu, "{e_t\circ i_t}"] & 
                              & {(\Omega^j \cap \mathcal{D}_j ,\Omega^{j+1} \cap\mathcal{D}_{j} )} \arrow[luu, "i"']          
\end{tikzcd}\]
and all the induced homomorphisms in cohomology are isomorphisms. We can write
\begin{align}d_2^{i,j}=&\partial(j_t^*\circ i^*)^{-1}((j_t^*\circ i^*)\xi\smallsmile \nu_{j+1} )+(j_t^*\circ i^*)^{-1}( (j_t^*\circ i^*)\partial\xi\smallsmile \nu_{j} )=\\  =&\partial(i^*)^{-1}(j^*)^{-1}(j_t^*(i^*\xi)\smallsmile j_t^*\circ \nu_{j+1} )+(i^*)^{-1}(j_t^*)^{-1}(j_t^*(i^*\partial\xi)\smallsmile j_t^*\circ \nu_{j} )=\\=&\partial(i^*)^{-1}(i^*\xi\smallsmile \nu_{j+1} )+(i^*)^{-1}(i^*\partial\xi\smallsmile \nu_{j} )\end{align}
where the pull-back property of the pullback in the third equality holds true because in that case it is just the standard cup-product. Because of how we defined the cup product in Lemma \ref{lem:makesense}, we have the claim.
\end{proof}

\subsection{Analytic Combinatorics}
In order to study the asymptotic of the number of isotopy classes of geometric graphs on the real line we will need some tools from analytic combinatorics. For a full introduction to the topic see \cite{flajolet2009analytic}.
Given a generating function $G(x)=\sum_{n=0}^{\infty}a_n x^n$ of a sequence $a_n$ we want to study the asymptotic of such a sequence. There are various techniques to do this.
\begin{definition}
We say that a sequence $\{a_n\}$ is of exponential order $K^n$ which we abbreviate as $a_n\bowtie K^n$ if and only if $\limsup |a_n|^{\frac{1}{n}}=K$.
\end{definition}

If we have $a_n\bowtie K^n$ then $a_n=K^n\theta(n)$ with $\limsup |\theta(n)|^{\frac{1}{n}}=1$. The term $\theta(n)$ is called subexponential factor.
In order to study the subexponential factor $\theta(n)$ we should look at the singularities of the generating function.


\begin{definition}
Given two numbers $\phi$, $R$ with $R>1$ and $0<\phi<\frac{\pi}{2}$, the open domain $D(\phi,R)$ is defined as 
\[D(\phi,R):=\{z\mid |z|<R,\:z\neq 1,\: |arg(z-1)|>\phi\}.\]
A domain of this type is called $D-$domain.
\end{definition}

Denoting by $S$ the set of all meromorphic functions of the form
\[S:=\{(1-z)^{-\alpha}\mid\alpha\in\R\},\]
we recall the next result \cite[Theorem VI.4]{flajolet2009analytic}, which we will need in the sequel.
\begin{theorem}\label{theorem:anacomb}
Let $G(z)$ be an analytic function at $0$ with a singularity at $\zeta$, such that $G(z)$ can be continued to a domain of the form $\zeta\cdot D_0$, for a $D-$domain $D_0$, where $\zeta\cdot D_0$ is the image of $D_0$ by the mapping $z\to\zeta z$. Assume there exists two functions $\sigma,\tau,$ where $\sigma$ is a finite linear combination of elements in $S$ and $\tau\in S$ such that
\[G(z)=\sigma  \left( \frac{z}{\zeta} \right) +O(\tau\left( \frac{z}{\zeta}\right))\:\:\:\textrm{\normalfont{as}}\:\:z\to \zeta \:\:\textrm{\normalfont{in}}\:\:\zeta\cdot D_0.\]
Then the coefficients of $G(z)$ satisfy the asymptotic estimate
\[a_n=\zeta^{-n}\sigma_n+O(\zeta^{-n}\tau_n^{*})\]
where $\sigma(z)=\sum_{n=0}^{\infty}\sigma_n z^n$ and $\tau_n^{*}=n^{\alpha-1}$, if $\tau(z)=(1-z)^{-\alpha}$.
\end{theorem}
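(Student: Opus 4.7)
The plan is to deform Cauchy's coefficient contour into a Hankel-type contour hugging the singularity, and reduce the statement to the standard transfer for the basic scale $(1-z)^{-\alpha}$. By substituting $z \mapsto z/\zeta$ we may assume $\zeta = 1$, so $G$ extends to the $D$-domain $D_0$ and the claim becomes $a_n = \sigma_n + O(\tau_n^*)$.

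First, I would apply Cauchy's formula $a_n = \tfrac{1}{2\pi i}\oint G(z)\,z^{-n-1}\,dz$ on a small circle about $0$ and deform it, using analyticity of $G$ inside $D_0$, to a Hankel-type contour $\gamma_n = \gamma_n^{\mathrm{in}} \cup \gamma_n^{\mathrm{out}}$. The inner part $\gamma_n^{\mathrm{in}}$ consists of a tiny circle of radius $1/n$ centered at $z=1$ joined to two rays making angle $\pm\phi'$ with the positive real axis (for some $\phi < \phi' < \pi/2$, to stay safely inside $D_0$), while $\gamma_n^{\mathrm{out}}$ closes the contour by a large arc of radius $r$ with $1 < r < R$. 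Crucially, on $\gamma_n^{\mathrm{in}}$ one has $|1 - z| \asymp 1/n$ and $|z|^{-n-1} = O(1)$ because $n\log|z|$ stays bounded, while on $\gamma_n^{\mathrm{out}}$ the factor $|z|^{-n-1}$ decays like $r^{-n}$.

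Second, I would establish the transfer for a single scale function $\tau(z) = (1-z)^{-\alpha}$. Using the Hankel contour representation of $1/\Gamma(\alpha)$, a direct computation gives
\be [z^n](1-z)^{-\alpha} = \binom{n+\alpha-1}{n} = \frac{n^{\alpha-1}}{\Gamma(\alpha)}\bigl(1 + O(1/n)\bigr), \ee
so $\tau_n \sim \tau_n^*$ up to a constant. By linearity, the same method applied to every term of $\sigma$ (a finite linear combination of such scales) produces exactly the coefficient $\sigma_n$, yielding the leading term of the claim.

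Third, I would handle the error. Writing $G(z) = \sigma(z) + E(z)$ with $|E(z)| \leq C\,|1-z|^{-\alpha}$ near $z=1$ in $D_0$, one must estimate $[z^n]E(z) = \tfrac{1}{2\pi i}\int_{\gamma_n} E(z)z^{-n-1}\,dz$. On $\gamma_n^{\mathrm{in}}$, combining $|E(z)| = O(n^{\alpha})$, $|z|^{-n-1} = O(1)$ and arc-length $O(1/n)$ yields a contribution of order $n^{\alpha-1}$; splitting the two rays into a bounded neighborhood of $1$ (parametrize by $z = 1 + \tfrac{t}{n}e^{\pm i\phi'}$, $t\in[1, \infty)$, or $t\in[1, n\eta)$ for a small $\eta$) and using $|z|^{-n-1} \leq e^{-c t}$ still gives $O(n^{\alpha-1})$. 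The outer arc $\gamma_n^{\mathrm{out}}$ contributes only exponentially small terms, absorbed into $O(\tau_n^*)$.

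The main obstacle will be the uniform geometric control on the Hankel contour: one has to verify that the inner rays stay in $D_0$ (which is where the angle $\phi$ of the $D$-domain enters — hence the need to pick $\phi' > \phi$), and that the standard estimates $|1-z| \asymp (1+t)/n$ and $|z|^n = e^{t\cos\phi'}\bigl(1+O(1/n)\bigr)$ hold on the parametrized rays uniformly in $t$ up to a fixed cutoff. Once these estimates are in place, the bound for the error term follows from the same integral computation that underlies the transfer for a single $(1-z)^{-\alpha}$, and the asymptotic equality $a_n = \zeta^{-n}\sigma_n + O(\zeta^{-n}\tau_n^*)$ drops out immediately.
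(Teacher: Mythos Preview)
The paper does not prove this theorem at all: it is stated as a quotation of \cite[Theorem VI.4]{flajolet2009analytic} (Flajolet--Sedgewick's transfer theorem) and used as a black box. Your sketch is correct and is precisely the standard argument given in that reference---Cauchy's formula deformed to a Hankel contour inside the $D$-domain, exact transfer for the scale functions $(1-z)^{-\alpha}$ giving the main term $\sigma_n$, and the $O$-transfer for the error term via the same contour estimates---so there is nothing to compare beyond noting that you have supplied the proof the paper chose to cite rather than reproduce.
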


\begin{remark}\label{remark:anadelt}
For a later use, we record the following. The Newton Binomial Series  is defined by
\[(1-z)^{-\alpha}=\sum_{n=0}^{\infty}b_n z^n\textnormal{  where  }b_n=\binom{n+\alpha-1}{n}.\]
Using 
\[\binom{n+\alpha-1}{n}=\frac{\Gamma(n+\alpha)}{\Gamma(\alpha)\Gamma(n+1)},\]
we get the following asymptotic for its coefficients:
\[b_n=\frac{n^{\alpha-1}}{\Gamma(\alpha)}\left( 1+O\left(\frac{1}{n}\right)\right).\]
\end{remark}

\section{Homology of the chambers and the Floer number}
\subsection{Graphs and sign conditions}Recall that given a graph $G$ on $n$ vertices we have defined
\be W_{G, d}=\{P\in \R^{d\times n}\backslash \Delta_{d,n}\,|\, G(P)\cong G\}\subset \R^{d\times n}.\ee
In other words, $W_{G,d}$ consists of all the points $P\in \R^{d\times n}$ not on the discriminant whose corresponding graph is isomorphic to $G$. For small $d$ this set could be a union of several chambers, but for large $d$ it is an actual chamber (a connected open set).

Now we introduce an alternative notation for labelling the sets $W_{G, d}$. For every $1\leq i<j \leq n$ let us denote by $q_{ij}:\R^{d\times n}\to \R$ the quadratic polynomial:
\be\label{eq:qp} q_{ij}(x_1, \ldots, x_n)=\|x_i-x_j\|^2-1,\quad(x_1, \ldots x_n)\in \R^{d\times n}.\ee
Notice that the discriminant $\Delta_{d, n}$ is given by:
\begin{align}\label{eq:deltap} \Delta_{d,n}&=\bigg\{(x_1, \ldots, x_n)\in \R^{d\times n}\,\bigg|\, \prod_{i<j}q_{ij}(x_1,\ldots, x_n)=0\bigg\}\\
&=\bigcup_{i<j}\Delta_{d,n}^{(i,j)},\end{align}
where the sets $\Delta_{d,n}^{(i,j)}$ are defined in \eqref{eq:ij}.
We denote by $\binom{[n]}{2}$ the set of all possible pairs $1\leq i<j\leq n$  and by $2^{\binom{[n]}{2}}$ the set of all possible choices of signs $\sigma_{ij}\in\{\pm\}$ for elements in $\binom{[n]}{2}$. 
\begin{definition}[Sign condition]For every $\sigma\in 2^{\genfrac{\{ }{\}}{0pt}{}{n}{2}}$ we denote by $W_{\sigma, d}\in \R^{d\times n}$ the open set:
\be W_{ \sigma,d}=\left\{x=(x_1, \ldots, x_n)\in \R^{d\times n}\,\big|\, \mathrm{sign}(q_{ij}(x))=\sigma_{ij}\, \right\}.\ee
\end{definition}
At this point what is clear from \eqref{eq:deltap} is that $\R^{d\times n}\backslash \Delta_{d,n}$ can be written as the union of all the possible sign conditions. 
The following lemma will be useful. It tells that we can label the chambers of $\R^{d\times n}\backslash \Delta_{d,n}$ either with a graph or with a sign condition -- however at this point we only prove that the sets $\{W_{G, d}\}$ and $\{W_{\sigma, d}\}$ coincide; the fact that the chambers of $\R^{d\times n}\backslash \Delta_{d,n}$ are exactly the sign conditions will follow from Corollary \ref{cor:conn1}.

\begin{lemma}\label{lemma:identity}For every $G$ graph on $n$ vertices there exists a sign condition $\sigma=\sigma(G)$ such that $W_{G,d}=W_{\sigma, d}.$
Viceversa, for every $\sigma$ there exists $G(\sigma)$ such that $W_{\sigma, d}=W_{G, \sigma}.$
In other words, the signs of the family of quadrics $\{q_{ij}\}_{1\leq i<j\leq n}$ on a point $P$ determine the isomorphism class of $G(P)$ uniquely as a labeled graph.
\end{lemma}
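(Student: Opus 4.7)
The statement is essentially a definition-chasing exercise, and my plan is to simply encode the edge relation of a labeled geometric graph as a sign vector indexed by $\binom{[n]}{2}$.

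First, given a graph $G$ on $n$ vertices with edge set $E(G) \subseteq \binom{[n]}{2}$, I define the sign condition $\sigma(G) \in 2^{\binom{[n]}{2}}$ by setting
\be \sigma(G)_{ij} = \begin{cases} - & \text{if } (i,j) \in E(G), \\ + & \text{if } (i,j) \notin E(G). \end{cases} \ee
Note that since we work with \emph{labeled} graphs, the pair $\sigma(G)$ is well defined with no ambiguity coming from relabeling vertices. The point is that the definition of a geometric graph in Definition \ref{def:gg} reads edges directly off of the sign of $q_{ij}$: by \eqref{eq:qp} we have $(i,j) \in E(G(P))$ if and only if $q_{ij}(P) < 0$, and for $P \notin \Delta_{d,n}$ the alternative is $q_{ij}(P) > 0$.

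Next I unravel the definition of $W_{G,d}$. A point $P \in \R^{d\times n}\setminus\Delta_{d,n}$ lies in $W_{G,d}$ iff $G(P)\cong G$ as labeled graphs, i.e.\ iff for every $1\le i<j\le n$ the pair $(i,j)$ is an edge of $G(P)$ exactly when it is an edge of $G$. Combining this with the previous observation, this is equivalent to $\mathrm{sign}(q_{ij}(P)) = \sigma(G)_{ij}$ for all $i<j$, which is the defining condition of $W_{\sigma(G),d}$. Hence $W_{G,d} = W_{\sigma(G),d}$, proving the first half.

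For the converse, given any $\sigma \in 2^{\binom{[n]}{2}}$ I define $G(\sigma)$ to be the labeled graph on $[n]$ with edge set $E(G(\sigma)) = \{(i,j) : \sigma_{ij} = -\}$. The same argument then gives $W_{\sigma,d} = W_{G(\sigma),d}$. The two assignments $G\mapsto \sigma(G)$ and $\sigma\mapsto G(\sigma)$ are manifestly inverse to each other, which makes explicit the assertion that the signs of the family $\{q_{ij}\}$ at a nondegenerate point determine the labeled isomorphism class of $G(P)$ uniquely. There is no real obstacle here; the only subtlety worth flagging is that the argument uses the \emph{labeled} nature of $G(P)$ in an essential way, since it is precisely the labels $i,j$ that index the quadrics $q_{ij}$.
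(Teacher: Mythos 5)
Your proof is correct and follows essentially the same definition-unwinding argument as the paper, which likewise reads the edge relation of $G(P)$ directly off the signs of the $q_{ij}$. If anything, your sign convention ($\sigma_{ij}=-$ for edges, since an edge means $q_{ij}<0$) is more carefully stated than in the paper's own proof, and you make the converse direction and the inverse bijection explicit where the paper leaves them implicit.
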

\begin{proof}
Given $P\in\R^{d\times n}\setminus\Delta_{d,n}$ is clear from the definition of geometric graph that $G(P)\cong G$ iff $q_{i,j}<0$ when $(i,j)\in G$ and $q_{i,j}>0$ when $(i,j)\notin G$. From this it follows that $W_{G,d}=W_{\sigma,d}$ where $\sigma_{i,j}=1$ if $(i,j)\in G$ and $\sigma_{i,j}=-1$ if $(i,j)\notin G$.
\end{proof}

\subsection{Betti numbers of the chambers}In this section we study the asymptotic distribution of the Betti numbers of the chambers. Before giving the main result, we will need some intermediate steps.

\subsubsection{Some preliminary reductions}Using Lemma \ref{lemma:identity} we can immediately switch from the graph labelling to the sign condition one and given $G$ there exists $\sigma$ such that $W_{G, d}=W_{\sigma, d}$. In this way we describe the chamber we are interested in with a system of quadratic inequalities, and we will take advantage of this description.

Our first step is to replace $W_{\sigma, d}$ with another set which has the same homology and which is compact. To start with, we have $W_{\sigma, d}=\{\textrm{sign}(q_{ij})=\sigma_{i,j}, \forall 1\leq i<j\leq n\},$ with the quadrics $q_{ij}:\R^{d\times n}\to \R$ defined above. Since $\sigma$ is fixed, it will be convenient for us to define the new quadrics:
\be s_{ij}=\sigma_{ij}q_{ij}\quad \textrm{and} \quad h_{ij}(x, z)=\sigma_{ij}(\|x_i-x_j\|^2-z^2).\ee
We set $N=nd$ and $k={n\choose 2}$ and for every $\epsilon>0$ consider the set:
\be W_{\sigma, d}(\epsilon)=\{[x]\in \mathbb{R}\mathrm{P}^N\, |\,h_{ij}(x, z)\geq \epsilon z^2\quad \forall 1\leq i<j\leq n, \quad \|x\|^2\leq \epsilon^{-1}z^2\}\subseteq \mathbb{R}\mathrm{P}^N.\ee
Notice that $W_{\sigma, d}(\epsilon)\cap \{z=0\}=\emptyset$, because if $z=0$ then the last inequality defining $W_{\sigma, d}(\epsilon)$ forces $x=0$. Therefore, in the affine chart $\{z=1\}$ the set $W_{\sigma, d}(\epsilon)$ can be described as
\be W_{\sigma, d}(\epsilon)=\{s_{ij}(x)\geq \epsilon\quad \forall 1\leq i<j\leq n, \quad \|x\|^2\leq \epsilon^{-1}\}\ee
and can be identified with a subset of $W_{\sigma, d}.$

\begin{proposition}\label{proposition:ed}For every $d>0$ there exists $\epsilon(d)$ such that  for all $\epsilon<\epsilon(d)$ the inclusion
\be W_{\sigma, d}(\epsilon)\longhookrightarrow W_{\sigma, d}\ee
induces an isomorphism on the homology groups.
\end{proposition}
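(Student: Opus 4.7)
The plan is to apply Corollary~\ref{coro:semtriv} to a carefully chosen continuous semialgebraic function $f$ on the semialgebraic set $W_{\sigma,d}$, constructed so that its superlevel sets $\{f\ge\epsilon\}$ coincide exactly with $W_{\sigma,d}(\epsilon)$ (in the affine chart $\{z=1\}$, as described immediately before the proposition). Once such an $f$ is in hand, the Corollary immediately yields that the inclusion $W_{\sigma,d}(\epsilon)\hookrightarrow\{f>0\}=W_{\sigma,d}$ is a homotopy equivalence for every $\epsilon<\epsilon(d)$, and therefore induces an isomorphism on every homology group -- which is exactly the statement of the proposition.

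The natural candidate is
\[
f(x)=\min\!\left(\min_{1\le i<j\le n}s_{ij}(x),\;\tfrac{1}{\|x\|^{2}}\right),
\]
with the convention $f(0)=\min_{i<j}s_{ij}(0)$ in the (only) case in which $0\in W_{\sigma,d}$, namely when $\sigma_{ij}=-$ for all $i<j$ (the complete graph); then $s_{ij}(0)=1$ for all $i<j$. By construction, for every $x\in W_{\sigma,d}$ and every $\epsilon>0$ one has
\[
f(x)\ge\epsilon \;\iff\; s_{ij}(x)\ge\epsilon\text{ for all }i<j\text{ and }\|x\|^{2}\le\epsilon^{-1},
\]
so $\{f\ge\epsilon\}=W_{\sigma,d}(\epsilon)$, and $\{f>0\}=W_{\sigma,d}$ follows at once from the definition of the chamber. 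This is the whole point of the construction: the proposition then reduces to the Corollary.

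What remains is to check that $f$ is continuous and semialgebraic on $W_{\sigma,d}$. Continuity away from the origin is immediate as a pointwise minimum of continuous functions; at $x=0$, one observes that $1/\|x\|^{2}\to+\infty$, so in a small neighbourhood of $0$ the minimum is realized by $\min_{i<j}s_{ij}(x)$, giving continuous matching with the value $f(0)$. Semialgebraicity is verified by exhibiting the graph of $f$ as the finite union of two semialgebraic pieces,
\[
\{(x,t):t=\min_{i<j}s_{ij}(x),\ t\|x\|^{2}\le 1\}\;\cup\;\{(x,t):t\|x\|^{2}=1,\ t\le\min_{i<j}s_{ij}(x)\}.
\]

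The only real technical point I anticipate is the careful treatment of $f$ at the origin in the complete--graph case, where $1/\|x\|^{2}$ is singular. Should this prove awkward to write cleanly, a backup is to replace $1/\|x\|^{2}$ by $1/(\|x\|^{2}+c)$ for a sufficiently small $c=c(d)>0$: this gives a globally continuous semialgebraic function on $\R^{d\times n}$ whose superlevel set $\{s_{ij}\ge\epsilon,\ \|x\|^{2}\le\epsilon^{-1}-c\}$ differs from $W_{\sigma,d}(\epsilon)$ only by shrinking the radius bound by a fixed additive constant. A second application of semialgebraic triviality (Theorem~\ref{thm:sat}) to the family of radial annuli then shows this difference is absorbed by a deformation retraction, yielding the same conclusion.
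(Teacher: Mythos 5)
Your argument is correct. The key identity $\{f\ge\epsilon\}=W_{\sigma,d}(\epsilon)$ and $\{f>0\}=W_{\sigma,d}$ holds (including at the origin in the complete-graph case, where your convention gives $f(0)=1$, matching the fact that $0\in W_{\sigma,d}(\epsilon)$ exactly when $\epsilon\le 1$), the continuity and semialgebraicity checks are sound, and Corollary~\ref{coro:semtriv} then delivers even more than the proposition asks for, namely a homotopy equivalence. The paper takes a slightly different route through the same underlying tool: instead of encoding the family into a single min-type function, it applies Theorem~\ref{thm:sat} directly to the projection $(x,\epsilon)\mapsto\epsilon$ of the total space $S=\{(x,\epsilon): s_{ij}(x)\ge\epsilon,\ \epsilon\|x\|^2\le 1\}$, concluding that the inclusions $W_{\sigma,d}(\epsilon_1)\hookrightarrow W_{\sigma,d}(\epsilon_2)$ are homotopy equivalences for small $\epsilon_1<\epsilon_2$, and then combines this with the fact that singular homology commutes with the direct limit over the increasing compact exhaustion $\bigcup_{\epsilon>0}W_{\sigma,d}(\epsilon)=W_{\sigma,d}$. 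Your version buys a cleaner statement (a deformation retraction of the open chamber onto the compact approximation, rather than only an isomorphism in homology obtained through a limiting argument) at the cost of the bookkeeping needed to verify that the ad hoc function $f$ is continuous and semialgebraic; the paper's version avoids constructing $f$ but needs the extra direct-limit step. Since Corollary~\ref{coro:semtriv} is itself proved via Theorem~\ref{thm:sat}, the two proofs ultimately rest on the same application of Hardt's semialgebraic triviality to the one-parameter family in $\epsilon$.
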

\begin{proof}Observe that $\{W_{\sigma, d}(\epsilon)\}_{\epsilon>0}$ is an increasing family of compact sets such that:
\be \bigcup_{\epsilon>0}W_{\sigma, d}(\epsilon)=W_{\sigma, d}.\ee
Therefore:
\be\label{eq:dirlim} H_*(W_{\sigma, d})=\varinjlim \{H_*(W_{\sigma, d}(\epsilon))\}.\ee
On the other hand, consider the semialgebraic set
\be S=\{(x, \epsilon)\,\in \R^{d\times n}\times \R\,|\,s_{ij}(x)\geq \epsilon\quad \forall 1\leq i<j\leq n, \quad \epsilon\|x\|^2\leq 1\}\ee
together with the semialgebraic map $f:S\to \R$ given by $(x, \epsilon)\mapsto \epsilon$. For $\epsilon>0$ we have that $W_{\sigma, d}(\epsilon)=f^{-1}(\epsilon)$ and,  by Semialgebraic Triviality, there exists $\epsilon(d)>0$ such that for all $\epsilon_1<\epsilon_2<\epsilon(d)$ the inclusion $W_{\sigma, d}(\epsilon_1)\longhookrightarrow W_{\sigma, d}(\epsilon_2)$ is a homotopy equivalence and the direct limit \eqref{eq:dirlim} stabilizes. This proves the statement.
\end{proof}

The set $W_{\sigma, d}(\epsilon)$ is now compact and for $\epsilon<\epsilon(d)$ has the same homology of $W_{\sigma, d}$. For technical reasons, this is not yet the set we will work with. Instead we will work with its double cover $V_{\sigma, d}(\epsilon)\subset S^{N}$:
\be V_{\sigma, d}(\epsilon)=\{x\in S^N\, |\,h_{ij}(x, z)\geq \epsilon z^2\quad \forall 1\leq i<j\leq n, \quad \|x\|^2\leq \epsilon^{-1}z^2\}\subset S^{N}. \ee
This will not be an obstacle for computing the Betti numbers of $W_{\sigma, d}(\epsilon)$, because of next lemma.
\begin{lemma}\label{lemma:sphere}For every $\epsilon>0$ the set $V_{d, \epsilon}(\epsilon)\subset S^N$ consists of two disjoint copies of $W_{\sigma, d}(\epsilon)$. In particular for all $k\geq 0$\be b_k(W_{\sigma, d}(\epsilon))=\frac{1}{2}b_k(V_{\sigma, d}(\epsilon)).\ee
\end{lemma}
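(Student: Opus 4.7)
The strategy is to exploit the fact that all the defining inequalities are homogeneous of degree $2$ in $(x,z)\in \R^{N+1}$, so they descend to $\mathbb{R}\mathrm{P}^N$; thus $W_{\sigma,d}(\epsilon)\subset \mathbb{R}\mathrm{P}^N$ is covered two-to-one by its preimage in $S^N$, which is exactly $V_{\sigma,d}(\epsilon)$. The only thing to verify is that this double cover is trivial, i.e.\ that the two sheets are separated inside $S^N$. The key observation is that the hyperplane $\{z=0\}$ does not meet $V_{\sigma,d}(\epsilon)$: indeed, the constraint $\|x\|^2\leq \epsilon^{-1}z^2$ forces $x=0$ whenever $z=0$, and $(0,0)\notin S^N$. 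Consequently
\begin{equation}
V_{\sigma,d}(\epsilon)=V^+\sqcup V^-,\qquad V^{\pm}:=V_{\sigma,d}(\epsilon)\cap \{\pm z>0\},
\end{equation}
and the two pieces are clopen in $V_{\sigma,d}(\epsilon)$.

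Next I would exhibit an explicit homeomorphism $V^+\cong W_{\sigma,d}(\epsilon)$. On $V^+$ one has $z>0$ and the rescaling map
\begin{equation}
\varphi:V^+\longrightarrow W_{\sigma,d}(\epsilon),\qquad (x,z)\longmapsto x/z,
\end{equation}
is continuous, with continuous inverse $y\mapsto (y,1)/\sqrt{\|y\|^2+1}$. Homogeneity of the defining inequalities shows that $\varphi$ is well-defined into the affine chart $\{z=1\}$ model of $W_{\sigma,d}(\epsilon)$: dividing $h_{ij}(x,z)\geq \epsilon z^2$ and $\|x\|^2\leq \epsilon^{-1}z^2$ by $z^2>0$ returns exactly the inequalities $s_{ij}\geq \epsilon$ and $\|y\|^2\leq \epsilon^{-1}$. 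An identical argument, using $(x,z)\mapsto -x/z$ (or equivalently the antipodal map $S^N\to S^N$ composed with $\varphi$), gives $V^-\cong W_{\sigma,d}(\epsilon)$.

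Therefore $V_{\sigma,d}(\epsilon)$ is homeomorphic to the disjoint union of two copies of $W_{\sigma,d}(\epsilon)$, and for every $k\geq 0$
\begin{equation}
b_k(V_{\sigma,d}(\epsilon))=2\,b_k(W_{\sigma,d}(\epsilon)),
\end{equation}
which is the claimed identity. There is no genuine obstacle here; the only subtle point is making sure that the open condition $\|x\|^2\leq \epsilon^{-1}z^2$ rules out the equator $\{z=0\}$, since without this bound the antipodal quotient could fail to split and one would only recover a $\mathbb{Z}_2$-equivariant statement rather than a literal disjoint union.
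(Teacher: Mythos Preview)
Your argument is correct and follows essentially the same route as the paper: both proofs note that the constraint $\|x\|^2\leq \epsilon^{-1}z^2$ forces $V_{\sigma,d}(\epsilon)$ to miss the equator $\{z=0\}$, split it into the two open halves $\{z>0\}$ and $\{z<0\}$ interchanged by the antipodal map, and identify each half with $W_{\sigma,d}(\epsilon)$ via the affine chart $\{z=1\}$. Your version is slightly more explicit in writing down the chart map $(x,z)\mapsto x/z$ and its inverse, but the content is identical.
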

\begin{proof}Let $\{z=0\}\simeq S^{N-1}$ be the equator in $S^N$ and observe that $\{z=0\}\cap V_{\sigma, d}(\epsilon)=\emptyset.$
This implies that 
\be V_{\sigma, d}(\epsilon)=\left(V_{\sigma, d}(\epsilon)\cap \{z>0\}\right) \sqcup\left(V_{\sigma, d}(\epsilon)\cap \{z<0\}\right).\ee
The involution $(x,z)\mapsto (-x-z)$ on the sphere $S^N$ restricts to a homeomorphism between $V_{\sigma, d}(\epsilon)\cap \{z>0\} $ and $V_{\sigma, d}(\epsilon)\cap \{z<0\}$. Each of these sets is homeomoprhic to its projection to the projective space $\mathbb{R}\mathrm{P}^N$, which is the set $W_{\sigma,d}(\epsilon).$ This concludes the proof.
\end{proof}

\subsubsection{Systems of quadratic inequalities}
The set $V_{\sigma, d}(\epsilon)$ defined above is the set of solutions of a system of quadratic inequalities,  and we will now use the spectral sequence from Section \ref{sec:spectral} for computing its Betti numbers with $\mathbb{Z}_2$ coefficients. 

In order to reduce to the  framework of Section \ref{sec:spectral}, let us introduce the homogeneous quadrics $h_{ij,\epsilon}, h_{0,\epsilon}:\R^{N+1}\to \R^{k+1}$ defined for all $1\leq i<j\leq n$ by:
\be h_{ij,\epsilon}(x, z)=\sigma_{ij}\|x_i-x_j\|^2-\sigma_{ij}z^2-\epsilon z^2 \quad \textrm{and} \quad h_{0,\epsilon}(x, z)=\|x\|^2-\epsilon^{-1} z^2.\ee
These quadrics can be put as the components of a quadratic \emph{map} defined by
\be h_\epsilon=(h_{0,\epsilon},h_{1,\epsilon}, h_{2, \epsilon}, \ldots, h_{k, \epsilon}):\R^{N+1}\to \R^{k+1},\ee 
where we are using the identification of sets of indices $\{1, 2, \ldots, k\}=\{(1,2),(1,3), \ldots, (n-1,n)\}.$
Inside the space $\R^{k+1}$ we can consider the closed convex cone
\be K=\{y_0\leq 0, y_{1}\geq 0 , \ldots, y_k\geq 0\},\ee
so that our original set can be written as
\be V_{\sigma, d}(\epsilon)=h_\epsilon^{-1}(K).\ee
In this case the set $\Omega\subset S^{k}$ is the set 
\be \Omega=\{(\omega_0, \ldots, \omega_k)\in S^k\,|\, \omega_0\geq 0, \omega_1\leq 0, \ldots, \omega_k\leq 0\}.\ee
For every point $\omega=(\omega_0, \ldots, \omega_k)$ we can consider the quadratic form $\omega h_\epsilon$ defined by:
\be \omega h_\epsilon=\omega_0h_{0, \epsilon}+\cdots +\omega_kh_{k, \epsilon}.\ee
Using this notation, for every $ j\geq 0$ we define the sets:
\be \Omega^j(\epsilon)=\{(\omega_0, \ldots, \omega_k)\in \Omega\,|\, \mathrm{ind}^+(\omega H_\epsilon)\geq j\}.\ee
These are just the sets $\Omega^j$ defined in Section \ref{sec:spectral}, in the case of the quadratic map $h_\epsilon$.

For every $1\leq i<j\leq n$ let us also denote by $U_{ij}\in \mathrm{Sym}(n, \R)$ the symmetric matrix representing the quadratic form $u_{ij}:\R^n\to \R$ defined by:
\be u_{ij}(t_1, \ldots, t_n)=\sigma_{ij}(t_i-t_j)^2.\ee
Then, if $H_{ij}\in \mathrm{Sym}(dn, \R)$ is the matrix representing the quadratic form $x\mapsto \sigma_{ij}\|x_i-x_j\|^2$, we have:
\be H_{ij}=U_{ij}\otimes \mathbf{1}_d\ee

\begin{lemma}\label{lemma:indexfunction}The index function $\mathrm{ind}^+:\Omega\to \mathbb{N}$ for our family of quadrics can be written as:
\be \mathrm{ind}^+(\omega H_\epsilon)=d\cdot \mathrm{ind}_1^+(\omega)+\mathrm{ind}_{0, \epsilon}^+(\omega),\ee
where 
\be \mathrm{ind}^+_1(\omega)=\mathrm{ind}^+\left(\omega_0\mathbf{1}_n+\sum_{i<j}\omega_{ij}U_{ij}\right)\ee
and 
\be \mathrm{ind}_{0, \epsilon}^+(\omega)=\mathrm{ind}^{+}\left(-\frac{\omega_0}{\epsilon}-\sum_{i<j}\omega_{ij}(\sigma_{ij}+\epsilon)\right).\ee
\end{lemma}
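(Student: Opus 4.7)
The plan is to exploit the block structure of the quadratic form $\omega h_\epsilon$ on $\R^{N+1} = \R^{dn} \oplus \R_z$. The first observation is that each component $h_{ij,\epsilon}$ and $h_{0,\epsilon}$ of $h_\epsilon$ splits as a quadratic form in the $x$-variables plus a scalar multiple of $z^2$, with no cross terms. Therefore the symmetric matrix $\omega H_\epsilon$ is block diagonal with a $dn\times dn$ block $M_x(\omega)$ and a $1\times 1$ block $M_z(\omega,\epsilon)$, and the positive inertia index is additive across the blocks:
\[
\mathrm{ind}^+(\omega H_\epsilon) \;=\; \mathrm{ind}^+(M_x(\omega)) + \mathrm{ind}^+(M_z(\omega,\epsilon)).
\]

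Next I would identify each block from the definitions. Collecting the $z^2$ coefficients one reads
\[
M_z(\omega,\epsilon) \;=\; -\frac{\omega_0}{\epsilon} - \sum_{i<j}\omega_{ij}(\sigma_{ij}+\epsilon),
\]
whose positive inertia index is, by definition, $\mathrm{ind}_{0,\epsilon}^+(\omega)$. For the $x$-block, the quadratic form in $x$ arising from $\omega h_\epsilon$ is
\[
\omega_0\|x\|^2 + \sum_{i<j}\omega_{ij}\sigma_{ij}\|x_i-x_j\|^2 \;=\; \sum_{\ell=1}^{d}\Bigl(\omega_0\|x^{(\ell)}\|^2 + \sum_{i<j}\omega_{ij}u_{ij}(x^{(\ell)})\Bigr),
\]
where $x^{(\ell)}=(x_1^{(\ell)},\ldots,x_n^{(\ell)})\in\R^n$ is the $\ell$-th row of $x\in\R^{d\times n}$. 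Equivalently, using the Kronecker identity $H_{ij}=U_{ij}\otimes \mathbf{1}_d$ stated just above the lemma (together with $\omega_0\mathbf{1}_{dn}=(\omega_0\mathbf{1}_n)\otimes\mathbf{1}_d$), one has $M_x(\omega)=A(\omega)\otimes\mathbf{1}_d$ with $A(\omega)=\omega_0\mathbf{1}_n+\sum_{i<j}\omega_{ij}U_{ij}$.

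The final step is to use the fact that the spectrum of a Kronecker product $A(\omega)\otimes\mathbf{1}_d$ is the spectrum of $A(\omega)$ with each eigenvalue repeated $d$ times, so $\mathrm{ind}^+(M_x(\omega))=d\cdot\mathrm{ind}^+(A(\omega))=d\cdot\mathrm{ind}_1^+(\omega)$. Substituting into the additivity formula yields the claim. No step looks genuinely difficult here; the only thing requiring care is keeping track of the signs $\sigma_{ij}$ and of the factor $\epsilon^{-1}$ when matching terms between the two blocks, which is a routine inspection of the definitions of $h_{ij,\epsilon}$ and $h_{0,\epsilon}$.
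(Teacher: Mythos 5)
Your proposal is correct and follows essentially the same route as the paper: block-diagonal decomposition of $\omega H_\epsilon$ into the $z^2$-scalar block and the $x$-block, additivity of the positive inertia index over blocks, identification of the $x$-block as the Kronecker product $\left(\omega_0\mathbf{1}_n+\sum_{i<j}\omega_{ij}U_{ij}\right)\otimes\mathbf{1}_d$, and the multiplicity-$d$ repetition of eigenvalues under tensoring with the identity. The row-by-row rewriting of the $x$-quadratic form is a harmless extra verification of the Kronecker identity already recorded in the paper.
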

Before giving the proof, observe that none of the functions $\mathrm{ind}^+_1,\mathrm{ind}^{+}_{0, \epsilon}:\Omega\to \mathbb{N}$ depends on $d$ and that $\mathrm{ind}^+_1$ does not even depend on $\epsilon.$ 
\begin{proof}Observe that, for $\omega=(\omega_0, \omega_{ij})\in \Omega$, the matrix $\omega H_\epsilon$ is a block matrix:
\be \omega H_\epsilon=\left(\begin{array}{c|ccc} -\frac{\omega_0}{\epsilon}-\sum_{i<j}\omega_{ij}(\sigma_{ij}+\epsilon)& 0 & \cdots & 0 \\\hline 0 &  &  &  \\\vdots &  &\omega_0\mathbf{1}_{dn}+\sum_{i<j}\omega_{ij}H_{ij}  &  \\0 &  &  & \end{array}\right)
\ee
and in particular:
\be \mathrm{ind}^+(\omega H_\epsilon)=\mathrm{ind}^+\left(-\frac{\omega_0}{\epsilon}-\sum_{i<j}\omega_{ij}(\sigma_{ij}+\epsilon)\right)+\mathrm{ind}^+\left(\omega_0\mathbf{1}_{dn}+\sum_{i<j}\omega_{ij}H_{ij}\right).\ee
The matrix $\omega_0\mathbf{1}_{dn}+\sum_{i<j}\omega_{ij}H_{ij}$ is a tensor product of matrices:
\be \omega_0\mathbf{1}_{dn}+\sum_{i<j}\omega_{ij}H_{ij}=\left(\omega_0\mathbf{1}_{d}+\sum_{i<j}\omega_{ij}U_{ij}\right)\otimes \mathbf{1}_n.\ee
If a matrix $Q\in \mathrm{Sym}(n, \R)$ has eigenvalues $\lambda_1(Q)\geq \cdots\geq \lambda_n(Q)$ (possibly with repetitions), the matrix $Q\otimes \mathbf{1}_d$ has eigenvalues:
\be \lambda_{i,j}(Q\otimes \mathbf{1}_d)=\lambda_i(Q)\quad i=1, \ldots, n,\, j=1, \ldots, d.\ee
In particular
\be \mathrm{ind}^{+}(Q\otimes \mathbf{1}_d)=d\cdot \mathrm{ind}^+(Q),\ee
and the result now follows.\end{proof}
\begin{corollary}\label{corollary:contractible}For $d\geq  n+1$ the set $\Omega^{nd}(\epsilon)$ is contractible and $\Omega^{nd+1}(\epsilon)$ is empty.
\end{corollary}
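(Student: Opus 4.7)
The plan is to reduce the statement, via the index decomposition of Lemma \ref{lemma:indexfunction}, to two concrete conditions on $\omega\in\Omega$: the positive definiteness of the $n\times n$ matrix $M(\omega):=\omega_0\mathbf{1}_n+\sum_{i<j}\omega_{ij}U_{ij}$, and the positivity of the scalar $-\omega_0/\epsilon-\sum_{i<j}\omega_{ij}(\sigma_{ij}+\epsilon)$. Using $\mathrm{ind}_1^+\le n$ and $\mathrm{ind}_{0,\epsilon}^+\le 1$, the additivity formula forces $\mathrm{ind}^+(\omega H_\epsilon)\le nd+1$; moreover, whenever $\mathrm{ind}_1^+(\omega)\le n-1$, one has $\mathrm{ind}^+(\omega H_\epsilon)\le d(n-1)+1<nd$ as soon as $d\ge 2$, so in particular the hypothesis $d\ge n+1$ is far more than what is needed here. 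It follows that $\Omega^{nd}(\epsilon)=\{\omega\in\Omega:M(\omega)\succ 0\}$ and $\Omega^{nd+1}(\epsilon)=\{\omega\in\Omega:M(\omega)\succ 0,\ \mathrm{ind}_{0,\epsilon}^+(\omega)=1\}$.

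For contractibility I consider the set $C:=\{\omega\in K^\circ:M(\omega)\succ 0\}\subset\R^{k+1}$. Since $M$ is linear in $\omega$, since the cone of positive definite matrices is convex, and since $K^\circ=\{\omega_0\ge 0,\ \omega_{ij}\le 0\}$ is a closed convex pointed cone, $C$ is itself a convex pointed cone; it is nonempty because $\omega^\ast:=e_0$ lies in $C$ (indeed $M(e_0)=\mathbf{1}_n\succ 0$). The straight-line homotopy $H(\omega,t)=(1-t)\omega+t\omega^\ast$ stays in $C$ by convexity, and it never hits the origin: if $(1-t)\omega+t\omega^\ast=0$ for some $t\in(0,1)$, then the first coordinate of $\omega$ would have to be strictly negative, contradicting $\omega\in K^\circ$. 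Hence $C\setminus\{0\}$ contracts to $\omega^\ast$, and radial projection identifies $C\setminus\{0\}$ up to homotopy with $C\cap S^k=\Omega^{nd}(\epsilon)$, which is therefore contractible.

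The emptiness of $\Omega^{nd+1}(\epsilon)$ is the most delicate point, and it must be read for $\epsilon$ in the small-parameter range set up by Proposition \ref{proposition:ed}; I argue by contradiction via a compactness-limit argument. Given a sequence $\epsilon_k\downarrow 0$ with $\omega^{(k)}\in\Omega^{nd+1}(\epsilon_k)$, extract a convergent subsequence $\omega^{(k)}\to\omega^\ast\in\Omega\subset S^k$. The inequality $\omega_0^{(k)}<-\epsilon_k\sum_{i<j}\omega_{ij}^{(k)}(\sigma_{ij}+\epsilon_k)$ has right-hand side of order $\epsilon_k\to 0$, so $\omega_0^\ast=0$; dividing by $\epsilon_k$ and using $\omega_0^{(k)}\ge 0$ gives $\sum_{i<j}\omega_{ij}^\ast\sigma_{ij}\le 0$ in the limit. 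Meanwhile $M(\omega^{(k)})\succ 0$ passes to $M(\omega^\ast)\succeq 0$; with $\omega_0^\ast=0$ this becomes $M(\omega^\ast)=\sum_{i<j}\omega_{ij}^\ast\sigma_{ij}(e_i-e_j)(e_i-e_j)^T\succeq 0$, and $\mathrm{tr}\,M(\omega^\ast)=2\sum_{i<j}\omega_{ij}^\ast\sigma_{ij}\le 0$, so the matrix must vanish. A short direct check (apply a hypothetical linear relation to each $e_k$) shows that the rank-one matrices $\{(e_i-e_j)(e_i-e_j)^T\}_{i<j}$ are linearly independent in $\mathrm{Sym}(n,\R)$, forcing $\omega_{ij}^\ast=0$ for every $i<j$. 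Hence $\omega^\ast=0$, contradicting $\omega^\ast\in S^k$. The main obstacle is precisely this last step, where one has to match the order-$\epsilon$ competition between positive definiteness of $M$ and the scalar inequality; compactness together with the linear independence of the rank-one generators of the signed Laplacian is what resolves it cleanly.
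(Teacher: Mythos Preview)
Your proof is correct, and for the contractibility of $\Omega^{nd}(\epsilon)$ it is essentially the paper's convexity argument, organized a bit more cleanly: you observe directly that $\Omega^{nd}(\epsilon)=\{\omega\in\Omega:M(\omega)\succ 0\}$ for $d\ge 2$, without first needing $\Omega^{nd+1}(\epsilon)=\emptyset$, whereas the paper establishes emptiness first and then writes $\Omega^{nd}(\epsilon)=\{\mathrm{ind}_1^+=n\}\cap\{\mathrm{ind}_{0,\epsilon}^+=0\}$. Both reduce to the same convex-cone picture.

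For the emptiness of $\Omega^{nd+1}(\epsilon)$ your route is genuinely different. The paper argues topologically: the set $B(\epsilon)=\{(\omega,[x]):\omega h_\epsilon(x)\ge 0\}$ projects onto $\mathbb{R}\mathrm{P}^N\setminus W_{\sigma,d}(\epsilon)$; if some $\omega$ had full positive index $N+1$, then $\{\omega\}\times\mathbb{R}\mathrm{P}^N\subset B(\epsilon)$, forcing the projection to be all of $\mathbb{R}\mathrm{P}^N$ and contradicting the nonemptiness of $W_{\sigma,d}(\epsilon)$ (which holds for $d\ge n$ by Maehara's realizability result and $\epsilon$ small). Your argument is purely analytic: a compactness--limit reduction combined with the linear independence of the rank-one matrices $(e_i-e_j)(e_i-e_j)^T$. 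This buys self-containment---you never invoke realizability or the auxiliary space $B(\epsilon)$---and in fact your argument is independent of $d$. The paper's argument, on the other hand, is shorter and conceptually ties the emptiness directly to the existence of a geometric realization of $G$. Both proofs, as you correctly note, only establish emptiness for $\epsilon$ sufficiently small; this is not a defect, since the paper's own argument also implicitly requires $W_{\sigma,d}(\epsilon)\neq\emptyset$, and the corollary is only ever applied in that regime.
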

\begin{proof}
Let us first show that $\Omega^{nd+1}(\epsilon)=\emptyset$. To this end consider the set:
\be B(\epsilon)=\{(\omega, [x])\in \Omega\times \mathbb{R}\mathrm{P}^{N}\,|\, \omega h_\epsilon(x)\geq 0\}.\ee
By \cite[Lemma 24]{AgrachevLerario} the projection $\pi=p_2|_{B(\epsilon)}$ on the second factor gives a homotopy equivalence between $B(\epsilon)$ and its image 
\be \pi(B(\epsilon))=\mathbb{R}\mathrm{P}^N\backslash W_{\sigma, d}(\epsilon).\ee
Since $W_{\sigma, d}(\epsilon)$ is nonempty, we know that
\be\label{eq:cont} \pi(B(\epsilon))\neq \mathbb{R}\mathrm{P}^N.\ee If now there was $\omega\in \Omega$ such that $\mathrm{ind}^+(\omega)=N+1$, then $\omega h_\epsilon>0$ and $\{\omega\}\times \mathbb{R}\mathrm{P}^N\subset
B(\epsilon)$. This would imply $\pi(B(\epsilon))=\mathbb{R}\mathrm{P}^N$, which contradicts \eqref{eq:cont}.

Let us now prove that $\Omega^{nd}(\epsilon)$ is contractible. For $d\geq n+1$, since $\Omega^{nd+1}(\epsilon)=\emptyset$, then the set $\Omega^{nd}(\epsilon)$ can be described as:
\begin{align}
\Omega^{nd}(\epsilon)&=\{\mathrm{ind}^{+}=nd\}\\
&=\{d\cdot \mathrm{ind}_{1}^++\mathrm{ind}_{0, \epsilon}\geq nd\}\\
&=\{\mathrm{ind}_{1}^+=n\}\cap \{\mathrm{ind}_{0, \epsilon}=0\}.
\end{align}
Observe that the point $\omega=(1, 0, \ldots, 0)\in \Omega$ belongs to both the sets $\{\mathrm{ind}_{1}^+=n\}$ and $\{\mathrm{ind}_{0, \epsilon}=0\}$, and their intersection is nonempty. 

Now, $\{\mathrm{ind}_{1}^+=n\}$ and $\{\mathrm{ind}_{0, \epsilon}=0\}$ are obtained by intersecting a convex set in $\R^{k+1}$ with $\Omega\cap S^{k}$, as they coincide with the set of the points where the linear families of symmetric matrices $\omega_0\mathbf{1}_{n}+\sum_{i<j}\omega_{ij}U_{ij}$ and $-\frac{\omega_0}{\epsilon}-\sum_{i<j}\omega_{ij}(\sigma_{ij}+\epsilon)$ are positive definite (respectively negative semidefinite). In other words, $\{\mathrm{ind}_{1}^+=n\}$ is the preimage of the positive definite cone under the linear map $\omega\mapsto \omega_0\mathbf{1}_{d}+\sum_{i<j}\omega_{ij}U_{ij}$ and $\{\mathrm{ind}_{0, \epsilon}=0\}$ is the preimage of the negative semidefinite cone under the linear map $\omega\mapsto-\frac{\omega_0}{\epsilon}-\sum_{i<j}\omega_{ij}(\sigma_{ij}+\epsilon)$. 

Therefore $\Omega^{nd}(\epsilon)$ is the intersection in $S^{k}\cap \Omega$ of convex sets, and being $\Omega$ itself also convex this intersection is contractible.
\end{proof}

Recalling the notation of Section \ref{sec:spectral}, but making it dependent on $\epsilon$, we have the vector bundle $P^{j}(\epsilon)\subseteq \Omega^{j}(\epsilon)\backslash \Omega^{j+1}(\epsilon)\times \R^{N+1}$
\be \label{eq:bundle}
\begin{tikzcd}
\mathbb{R}^j \arrow[r, hook] & P^j(\epsilon) \arrow[d] \\
                             &   \Omega^{j}(\epsilon)\backslash \Omega^{j+1}(\epsilon)                  
\end{tikzcd}
\ee
whose fiber over a point $\omega$ is the positive eigenspace of $\omega H_\epsilon.$ As above, this bundle is the restriction of a bundle over the set
\be D_j(\epsilon)=\{\omega\,|\, \lambda_j(\omega H_\epsilon)\neq \lambda_{j+1}(\omega H_\epsilon)\}\ee
(i.e. the set where the $j$th eigenvalue of $\omega H_\epsilon$ is distinct from the $(j+1)$st). We still denote this bundle by $P_j(\epsilon)\subset D_{j}(\epsilon)\times \R^{N+1}$: 
\be \label{eq:bundle2}
\begin{tikzcd}
\mathbb{R}^j \arrow[r, hook] & P^j(\epsilon) \arrow[d] \\
                             &   D_j(\epsilon)                 
\end{tikzcd}
\ee 
Here the fiber over a point $\omega\in D_{j}(\epsilon)$ consists of the eigenspace of $\omega H_\epsilon$ associated to the first $j$ eigenvalues. We denote by
\be \label{eq:sw}\nu_j(\epsilon)\in H^{1}(D_j(\epsilon))\ee
the first Stiefel-Whitney class of this bundle.

Restating Theorem \ref{thm:spectralbasis} in this setting, we get the following.

\begin{theorem}\label{thm:spectral}There exists a cohomology spectral sequence $(E_r(\epsilon), d_r(\epsilon)_{r\geq 1})$ converging to $H^{*}(S^N\backslash V_{\sigma, d}(\epsilon);\mathbb{Z}_2)$ such that:
\begin{enumerate}
\item the second page of the spectral sequence is given, for $j>0$, by
\be E_2^{i,j}(\epsilon)=
 H^i(\Omega^{j+1}(\epsilon), \Omega^{j+2}(\epsilon);\mathbb{Z}_2).\ee
 For $j=0$, the elements of the second page of the spectral sequence fit into a long exact sequence:
\be\label{eq:exact} \cdots \rightarrow H^i(\Omega^{1}(\epsilon);\mathbb{Z}_2)\rightarrow E_{2}^{i, 0}(\epsilon)\rightarrow H^i(\Omega^{1}(\epsilon), \Omega^{2}(\epsilon);\mathbb{Z}_2)\stackrel{(\cdot) \smile \nu_1(\epsilon)}{\longrightarrow} H^{i+1}(\Omega^{1}(\epsilon);\mathbb{Z}_2)\rightarrow\cdots.\ee
\item for $j\geq 1$ the second differential $d_2^{i,j}(\epsilon):H^i(\Omega^{j+1}(\epsilon), \Omega^{j+2}(\epsilon))\to H^{i+2}(\Omega^{j}(\epsilon), \Omega^{j+1}(\epsilon))$ is given by 
\be d_2^{i,j}(\epsilon)\xi=\partial(\xi\smallsmile \nu_{j+1}(\epsilon))+\partial \xi\smallsmile \nu_{j}(\epsilon).\ee
\end{enumerate} 
\end{theorem}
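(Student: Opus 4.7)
The plan is to recognize Theorem \ref{thm:spectral} as the direct specialization of the general Theorem \ref{thm:spectralbasis} to the system of quadratic inequalities defining $V_{\sigma, d}(\epsilon)$, and then to verify that every object appearing in the abstract formulation corresponds to the $\epsilon$-dependent object in the statement.

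First, I would check that the setup of Section \ref{sec:spectral} applies verbatim. The map $h_\epsilon=(h_{0,\epsilon}, h_{1,\epsilon}, \ldots, h_{k,\epsilon}):\R^{N+1}\to \R^{k+1}$ is a homogeneous quadratic map, and
\be K=\{y_0\leq 0, \, y_1\geq 0, \, \ldots, \, y_k\geq 0\}\subset \R^{k+1} \ee
is a closed convex polyhedral cone centered at the origin. By construction,
\be V_{\sigma, d}(\epsilon)=h_\epsilon^{-1}(K)\cap S^N, \ee
which is exactly the shape considered in \eqref{eq:V}. A short calculation with the defining inequalities of $K$ gives the polar cone $K^\circ$, and intersecting with the unit sphere in $(\R^{k+1})^*$ produces precisely the set $\Omega=\{(\omega_0, \ldots, \omega_k)\in S^k\mid \omega_0\geq 0,\, \omega_1\leq 0, \ldots, \omega_k\leq 0\}$ written in the statement.

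Next, I would match the auxiliary objects. The sets $\Omega^j(\epsilon)$ defined here are, by definition, the sets $\{\omega\in\Omega\mid \mathrm{ind}^+(\omega h_\epsilon)\geq j\}$, i.e.\ the sets $\Omega^j$ from Section \ref{sec:spectral} for the quadratic map $h_\epsilon$. The vector bundles $P^j(\epsilon)\to\Omega^j(\epsilon)\setminus\Omega^{j+1}(\epsilon)$ and their extensions to $D_j(\epsilon)$ are exactly the bundles \eqref{eq:bundle} and \eqref{eq:bundle2} attached to $h_\epsilon$, so the Stiefel-Whitney classes $\nu_j(\epsilon)\in H^1(D_j(\epsilon))$ are the classes \eqref{eq:sw} for this map. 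Consequently, the cup product of Lemma \ref{lem:makesense} and the boundary homomorphism $\partial$ from the long exact sequence of the triple $(\Omega^j(\epsilon),\Omega^{j+1}(\epsilon),\Omega^{j+2}(\epsilon))$ are well-defined in the present setting.

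With the dictionary in place, the two conclusions of the theorem are just the two conclusions of Theorem \ref{thm:spectralbasis} transported through this identification: point (1) is the description of $E_2^{i,j}$ for $j>0$ and the long exact sequence governing $E_2^{i,0}$ from \eqref{eq:exact}, while point (2) is the formula
\be d_2^{i,j}(\epsilon)\xi=\partial(\xi\smallsmile \nu_{j+1}(\epsilon))+\partial\xi\smallsmile \nu_j(\epsilon). \ee
The only subtlety, and therefore the step I expect to require the most care, is that the system of quadrics $h_\epsilon$ can be degenerate in the sense of \cite{AgrachevLerario}. This is exactly the issue already addressed in the proof of Theorem \ref{thm:spectralbasis}, where a regularization by a generic positive definite form $q_0$ and a passage to the inverse limit over the regularizing parameter are used. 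Therefore I would simply invoke Theorem \ref{thm:spectralbasis} rather than redo this regularization, noting that none of its hypotheses place any non-degeneracy requirement on $h_\epsilon$ itself.
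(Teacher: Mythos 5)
Your proposal is correct and matches the paper exactly: the paper introduces Theorem \ref{thm:spectral} with the single line ``Restating Theorem \ref{thm:spectralbasis} in this setting, we get the following,'' i.e.\ it too obtains the result as the specialization of the general spectral sequence of Theorem \ref{thm:spectralbasis} to the quadratic map $h_\epsilon$ and the cone $K$, with the degeneracy issue already absorbed into the regularization argument of that earlier proof. Your verification of the dictionary (polar cone giving $\Omega$, the sets $\Omega^j(\epsilon)$, the bundles and classes $\nu_j(\epsilon)$) is exactly the bookkeeping the paper leaves implicit.
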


\begin{remark}As explained in \cite[Introduction]{AgrachevLerario}, the second differential only depends on the restriction of $\nu_j(\epsilon)$ to the set $\Omega^{j}(\epsilon)\backslash\Omega^{j+1}(\epsilon)$.
\end{remark}

\begin{remark}In the previous spectral sequence the coefficient group for the various cohomologies is the field $\mathbb{Z}_2$. There is an analogous spectral sequence for coefficients in $\mathbb{Z}$, but the description of its differentials is less clear. 
\end{remark}

\subsubsection{The analysis of the spectral sequence and its asymptotic structure}
We will start by proving the following proposition, which deals with the stabilization of entries of the second page of the spectral sequence of Theorem \ref{thm:spectral}.
\begin{proposition}\label{propo:structure}There exist semialgebraic topological spaces
\be \Omega=A_0\supseteq B_0\supseteq A_1\supseteq B_1\supseteq \cdots \supseteq A_{n}\supseteq B_n=\emptyset,\ee vector spaces $N^{0, 0}, \ldots, N^{k, 0}$
and $\epsilon_1>0$ such that for all $\epsilon\leq\epsilon_1$ the second page of the spectral sequence of Theorem \ref{thm:spectral} has the following structure
\be\label{eq:structure} E_{2}^{i,j}(\epsilon)\simeq \left\{\begin{matrix} 
H^i(B_\ell, A_{\ell+1})&\textrm{if $j=\ell d$}\\[3pt]
H^i(A_\ell, B_{\ell})&\textrm{if $j=\ell d-1$}\\[3pt]
N^{i,0}&\textrm{if $j=0$}\\[3pt]
0&\textrm{otherwise}\end{matrix}\right.
\ee
\end{proposition}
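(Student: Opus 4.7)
The plan is to exploit the index decomposition $\mathrm{ind}^+(\omega H_\epsilon) = d \cdot \mathrm{ind}_1^+(\omega) + \mathrm{ind}_{0,\epsilon}^+(\omega)$ from Lemma \ref{lemma:indexfunction}. The second summand is the positive inertia of a one-dimensional quadratic form and hence takes values in $\{0, 1\}$, while the first summand depends neither on $d$ nor on $\epsilon$. Setting
\[A_\ell := \{\omega \in \Omega : \mathrm{ind}_1^+(\omega) \geq \ell\}, \qquad 0 \leq \ell \leq n,\]
produces a decreasing chain of semialgebraic sets, genuinely independent of $d$ and $\epsilon$, with $A_0 = \Omega$.

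Next I would stratify $\Omega^j(\epsilon)$ by $j$ modulo $d$. Writing $j = \ell d + r$ with $0 \leq r \leq d - 1$, the inequality $d \cdot \mathrm{ind}_1^+ + \mathrm{ind}_{0, \epsilon}^+ \geq j$ combined with $\mathrm{ind}_{0, \epsilon}^+ \in \{0, 1\}$ forces
\[\Omega^{\ell d}(\epsilon) = A_\ell, \qquad \Omega^{\ell d + r}(\epsilon) = A_{\ell+1} \quad\text{for } 2 \leq r \leq d - 1,\]
while for $r = 1$ the set equals $A_{\ell+1} \cup \{\mathrm{ind}_1^+ = \ell,\ \mathrm{ind}_{0, \epsilon}^+ = 1\} =: B_\ell(\epsilon)$, satisfying the sandwich $A_{\ell+1} \subseteq B_\ell(\epsilon) \subseteq A_\ell$. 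Plugged into Theorem \ref{thm:spectral}, this means $E_2^{i, j}(\epsilon) = H^i(\Omega^{j+1}(\epsilon), \Omega^{j+2}(\epsilon))$ vanishes unless $j$ is of the form $\ell d$ or $\ell d - 1$ with $\ell \geq 1$, in which cases it equals $H^i(B_\ell(\epsilon), A_{\ell+1})$ or $H^i(A_\ell, B_\ell(\epsilon))$ respectively. The boundary identity $B_n = \emptyset$ is exactly $\Omega^{nd + 1}(\epsilon) = \emptyset$ from Corollary \ref{corollary:contractible}.

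Removing the $\epsilon$-dependence from $B_\ell(\epsilon)$ is what remains, and here I would invoke Semialgebraic Triviality (Theorem \ref{thm:sat}) for the projection $\Omega \times (0, \epsilon_0] \to (0, \epsilon_0]$ restricted to the semialgebraic set $\mathcal{B}_\ell := \{(\omega, \epsilon) : \omega \in B_\ell(\epsilon)\}$, made compatible with the product subset $A_{\ell+1} \times (0, \epsilon_0]$. For $\epsilon \leq \epsilon_1$ sufficiently small this yields a single pair $(B_\ell, A_{\ell+1})$ homeomorphic to each $(B_\ell(\epsilon), A_{\ell+1})$; taking the minimum $\epsilon_1$ over $\ell \in \{0, \ldots, n-1\}$ gives the claimed isomorphisms for $j \geq 1$. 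For $j = 0$ I would feed $\Omega^1(\epsilon) = B_0(\epsilon)$ and $\Omega^2(\epsilon) = A_1$ into the long exact sequence of Theorem \ref{thm:spectral}(1), defining $N^{i, 0}$ as the resulting vector space; the same triviality argument makes it $\epsilon$-independent once the Stiefel--Whitney class $\nu_1(\epsilon) \in H^1(D_1(\epsilon))$ has been stabilized. The main obstacle I anticipate is verifying that a single semialgebraic trivialization can be chosen compatibly with all the subsets $A_{\ell+1}$ simultaneously and with the loci $D_j(\epsilon)$ carrying the Stiefel--Whitney classes entering the $j = 0$ long exact sequence; this should follow from the compatibility clause of Theorem \ref{thm:sat} applied to the joint semialgebraic family, but requires some care.
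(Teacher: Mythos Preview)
Your proposal is correct and follows essentially the same approach as the paper's proof: both rely on the index decomposition of Lemma \ref{lemma:indexfunction} to see that $\mathrm{ind}^+$ only takes values $\ell d$ and $\ell d+1$, define $A_\ell=\{\mathrm{ind}_1^+\geq\ell\}$ and $B_\ell(\epsilon)=\Omega^{\ell d+1}(\epsilon)$, and then invoke Semialgebraic Triviality to stabilize the $\epsilon$-dependent pieces (including the bundle $P^1(\epsilon)\to D_1(\epsilon)$ for the $j=0$ row). Your observation that $A_\ell$ is already $\epsilon$-independent is a minor streamlining over the paper, which formally writes $A_\ell(\epsilon)$ before noting it equals $\{\mathrm{ind}_1^+\geq\ell\}$.
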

\begin{proof}Observe first that the second page of the spectral sequence is zero in the region $\{(i,j)\,|\, i\geq k+1, j>0\}$, because all the sets $\Omega^{j}(\epsilon)$ are semialgebraic and of dimension at most $k$ (since they are contained in $\Omega\subset S^{k}$). The $j = 0$ row of the spectral sequence is also zero for $i\geq k+2$, since for the same reason all the groups in the exact sequence in \eqref{eq:exact}  are zero.

Observe now that Lemma \ref{lemma:indexfunction} implies that the only possible values of the function $\mathrm{ind}^+:\Omega \to \mathbb{N}$ are $0, 1, d, d+1, \ldots, nd, nd+1$ and in particular:
\begin{align} \Omega=\Omega^0(\epsilon)\supseteq \Omega^1(\epsilon)\supseteq \Omega^2(\epsilon)=\Omega^3(\epsilon)=\cdots=&\Omega^d(\epsilon)\supseteq \Omega^{d+1}(\epsilon)\supseteq\Omega^{d+2}(\epsilon)=\Omega^{d+3}(\epsilon)=\cdots\\
\label{eq:inclusions}\cdots =\Omega^{nd-1}(\epsilon)=&\Omega^{nd}(\epsilon)\supseteq\Omega^{nd+1}(\epsilon)\supseteq\emptyset.
\end{align}
In particular, for every $\ell=0, \ldots, n$, we deduce the vanishing of the homology of all the relative pairs:
\be H^*(\Omega^{d\ell+2}(\epsilon), \Omega^{d\ell+3}(\epsilon))=\cdots=H^*(\Omega^{(\ell+1) d-1}(\epsilon), \Omega^{(\ell+1) d}(\epsilon))=0.\ee
This proves the ``otherwise'' part of the claim in \eqref{eq:structure}.

We now defined the sets $A_\ell(\epsilon)=\{\textrm{ind}^+\geq d\ell\}$ and $B_\ell(\epsilon)=\{\textrm{ind}^+\geq d\ell+1\}$ and observe that:
\begin{align} A_\ell(\epsilon)&=\{\textrm{ind}_1^+\geq \ell\}\\
B_{\ell}(\epsilon)&=\left(\{\textrm{ind}_1^+\geq \ell\}\cap \{\textrm{ind}_{0, \epsilon}^+=1\}\right)\cup\{\mathrm{ind}_1^{+}\geq \ell+1\},
\end{align}
where the index functions $\textrm{ind}_{0, \epsilon}^+, \textrm{ind}_1^+:\Omega\to \mathbb{N}$ are defined in Lemma \ref{lemma:indexfunction}. Since $\textrm{ind}^+_{1}$ does not depend on $d$ nor on $\epsilon$ and $\textrm{ind}_{0, \epsilon}$ does not depend on $d$, by semialgebraic triviality it follows that there exists $\epsilon_1>0$ such that the homotopy of the sequence of inclusions
\be \Omega=A_0(\epsilon)\supseteq B_0(\epsilon)\supseteq A_1(\epsilon)\supseteq B_1(\epsilon)\supseteq \cdots \supseteq A_{n}(\epsilon)\supseteq B_n(\epsilon)=\emptyset\ee
stabilizes for $\epsilon\leq \epsilon_1.$ 

We define $A_\ell=A_\ell(\epsilon_1)$ and $B_\ell=B_\ell(\epsilon_1).$ With this notation we have that the sequence of inclusions \eqref{eq:inclusions} for $\epsilon\leq \epsilon_0$ becomes (up to natural homotopy equivalences):
\begin{align} \Omega=A_0\supseteq B_0\supseteq A_1=A_1=\cdots=&A_1\supseteq B_1\supseteq A_2=A_2=\cdots\\
\cdots =A_n=&A_n\supseteq B_n\supseteq\emptyset.
\end{align}
This proves the statement for the term $E_2^{i,j}(\epsilon)$ of the spectral sequence with $j=\ell d-1,\ell d.$ 

In the case $j=0$, we observe that the dimension of $E_2^{i, 0}(\epsilon)$ is determined by the exact sequence:
\begin{align} 0\rightarrow \mathrm{ker}\rightarrow H^{i-1}(\Omega^1(\epsilon),\Omega^{2}(\epsilon))\rightarrow &H^{i}(\Omega^1(\epsilon))\rightarrow E_2^{i, 0}(\epsilon)\rightarrow\\&\rightarrow  H^i(\Omega^{1}(\epsilon), \Omega^2(\epsilon))\rightarrow H^{i+1}(\Omega^1(\epsilon))\rightarrow \mathrm{coker}\rightarrow 0,\end{align}
where $\mathrm{ker}$ and $\mathrm{coker}$ refer to the map $x\mapsto x\smile \nu_1(\epsilon).$ The homotopy of the first, the third and the fourth element of the above sequence stabilizes for $\epsilon\leq \epsilon_1$; moreover
(possibly choosing a smaller $\epsilon_1$) also the homotopy of the bundle $P^1(\epsilon)\to D_1(\epsilon)$ from \eqref{eq:bundle2} stabilizes for $\epsilon\leq \epsilon_1$ and therefore the map $x\mapsto x\smile \nu_1(\epsilon))$ stabilizes as well, and consequently the ranks of $\mathrm{ker}$ and $\mathrm{coker}$ stabilize. This gives the stabilization of $\mathrm{dim}_{\mathbb{Z}_2}(E_2^{i,0}(\epsilon))$ to a finite number for $\epsilon\leq \epsilon_1$. We set:
\be N^{i,0}:=\mathbb{Z}_2^{\mathrm{dim}_{\mathbb{Z}_2}(E_2^{i,0}(\epsilon))}\quad \forall\epsilon\leq\epsilon_1. \ee
This concludes the proof.
\end{proof}

Next we deal with the stabilization of the second differential. 

\begin{proposition}\label{proposition:differential}The second differential of the spectral sequence \eqref{eq:structure} is zero.
\end{proposition}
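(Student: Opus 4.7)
The plan is to show that, for each potentially nonzero differential $d_2^{i,\ell d}\colon E_2^{i,\ell d}\to E_2^{i+2,\ell d-1}$ (with $\ell\geq 1$), the two Stiefel--Whitney classes $\nu_{\ell d+1}$ and $\nu_{\ell d}$ that enter the formula of Theorem~\ref{thm:spectral}(2) are pullbacks of a single class defined on a common open set, so that the two summands in the formula coincide and cancel modulo~$2$.

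First I would reduce to the case $j=\ell d$, $\ell\geq 1$: the differential $d_2$ has bidegree $(2,-1)$, and by Proposition~\ref{propo:structure} the second page is zero outside the rows $j\in\{0\}\cup\{\ell d-1,\ell d:1\leq\ell\leq n-1\}$, so for $d\geq 3$ every other bidegree has either zero source or zero target. Next I would analyze the positive eigenspace bundles $P^{\ell d}$ over $D_{\ell d}$ and $P^{\ell d+1}$ over $D_{\ell d+1}$ via the block decomposition from Lemma~\ref{lemma:indexfunction}: $\omega H_\epsilon$ is block diagonal with a scalar $1\times 1$ block of eigenvalue $\alpha(\omega):=-\omega_0/\epsilon-\sum_{i<j}\omega_{ij}(\sigma_{ij}+\epsilon)$, whose eigenvector is the fixed vector $e_0\in\R^{N+1}$, and a block $M(\omega)\otimes\mathbf{1}_d$ with $M(\omega):=\omega_0\mathbf{1}_n+\sum_{i<j}\omega_{ij}U_{ij}$ of eigenvalues $\mu_1(\omega)\geq\cdots\geq\mu_n(\omega)$, so that each $\mu_i(\omega)$ contributes $d$ equal eigenvalues of $\omega H_\epsilon$. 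A short case analysis on the insertion slot of $\alpha(\omega)$ among the $\mu_i$'s then shows that both $D_{\ell d}$ and $D_{\ell d+1}$ are contained in the open set
\[
\tilde D_\ell:=\{\omega\in\Omega : \mu_\ell(\omega)>\mu_{\ell+1}(\omega)\},
\]
and that on them the positive eigenspaces decompose as
\[
P^{\ell d}\big|_{D_{\ell d}}\cong E\otimes\R^d\cong E^{\oplus d},\qquad P^{\ell d+1}\big|_{D_{\ell d+1}}\cong \R\,e_0\oplus\bigl(E\otimes\R^d\bigr),
\]
where $E$ is the rank-$\ell$ bundle over $\tilde D_\ell$ whose fiber at $\omega$ is the span of the top $\ell$ eigenvectors of $M(\omega)$. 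Since $\R\,e_0$ is a trivial line bundle, the Whitney sum formula in $\mathbb{Z}_2$-cohomology gives $\nu_{\ell d}=d\cdot w_1(E)|_{D_{\ell d}}$ and $\nu_{\ell d+1}=d\cdot w_1(E)|_{D_{\ell d+1}}$, both pulled back from the single class $\gamma:=d\cdot w_1(E)\in H^1(\tilde D_\ell;\mathbb{Z}_2)$.

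Finally I would invoke Remark~\ref{rem:delta}. A collision $\mu_\ell(\omega)=\mu_{\ell+1}(\omega)$ forces $\mathrm{ind}^+(\omega H_\epsilon)$ to be either $\geq(\ell+1)d$ or $\leq(\ell-1)d+1$, hence for $d\geq 2$ one has $\Omega^{\ell d+1}\setminus\tilde D_\ell\subseteq\Omega^{\ell d+2}$ and $\Omega^{\ell d}\setminus\tilde D_\ell\subseteq\Omega^{\ell d+1}$, so excision identifies the relative pairs appearing in the differential with their intersections with $\tilde D_\ell$. Remark~\ref{rem:delta} with ambient open set $A=\tilde D_\ell$ and ambient class $\gamma$ then yields $\partial(\xi\smallsmile\gamma)=\partial\xi\smallsmile\gamma$, and substituting into Theorem~\ref{thm:spectral}(2) we obtain
\[
d_2^{i,\ell d}\xi=\partial(\xi\smallsmile\gamma)+\partial\xi\smallsmile\gamma=2\,\partial\xi\smallsmile\gamma=0\quad\text{in }\mathbb{Z}_2.
\]
The main obstacle is the eigenvalue case analysis underlying the bundle decompositions and the inclusions $D_{\ell d},D_{\ell d+1}\subseteq\tilde D_\ell$; once these are in hand, the cancellation is a formal consequence of Remark~\ref{rem:delta} and the Whitney sum formula.
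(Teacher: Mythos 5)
Your proposal is correct and follows essentially the same route as the paper's proof: the same block decomposition of $\omega H_\epsilon$, the same identification of $P^{\ell d}$ and $P^{\ell d+1}$ as restrictions of a common eigenspace bundle (plus a trivial line for the scalar block) over a common open set containing both $D_{\ell d}$ and $D_{\ell d+1}$, and the same appeal to Remark~\ref{rem:delta} to commute $\partial$ past the cup product. The only cosmetic difference is that you identify $\nu_{\ell d}$ and $\nu_{\ell d+1}$ outright as restrictions of one class $\gamma$ (so the two summands cancel mod $2$), whereas the paper first cancels the common summand $\gamma_{\ell d}$ and then kills the residual class $\eta(\epsilon)$ by triviality of the line bundle.
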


\begin{proof}
Observe that the only possible nonzero differential of the spectral sequence is (for $d\geq 2$)
\be d_2^{*, \ell d}(\epsilon):E_2^{*, \ell d}(\epsilon)\to E_2^{*+2, \ell d-1}(\epsilon).\ee
Let us recall that we have defined $\omega H_{\epsilon}=\omega q_1+\omega q_2$ where $\omega q_1=(\omega_0\mathbf{1}_{dn}+\sum_{i<j}\omega_{ij}H_{ij}) $ and $\omega q_2=\left(-\frac{\omega_0}{\epsilon}-\sum_{i<j}\omega_{ij}(\sigma_{ij}+\epsilon)\right)z^2$. We introduce the vector bundles
\be \begin{tikzcd}
\R^{d\times l} \arrow[r, hook] & N_{\ell d} \arrow[d]        \\
                           & \mathcal{D}_{\ell d}^{1}
\end{tikzcd}\quad \quad \begin{tikzcd}
\R \arrow[r, hook] & E(\epsilon) \arrow[d]                \\
                   & {\Omega}
\end{tikzcd}\ee
where $\mathcal{D}_{\ell d}^{1}:=\{\omega\in\Omega\mid\lambda_{\ell d}(\omega q_1)\neq\lambda_{ld+1}(\omega q_1)\}$ is given by  $N_{\ell d}\subset \mathcal{D}_{\ell d}^{1}\times \R^{d n}$ is the bundle of the eigenspace of the first $\ell d$ eigenvalues of the upper-left block of $\omega H_\epsilon$ and the bundle $E(\epsilon)$ associates to every point of $\Omega$ the unique eigenvector of $\omega q_2$. 

Observe that $\mathcal{D}_{\ell d}(\epsilon)\subset\mathcal{D}_{ \ell d}^{1}$ and also $\mathcal{D}_{\ell d+1}(\epsilon)\subset\mathcal{D}_{\ell d}^{1}$. 

The vector bundle $P_{\ell d}(\epsilon)$ from \eqref{eq:bundle2} for $j=\ell d$ has the property that
\be P_{\ell d}(\epsilon)=N_{\ell d}|_{\mathcal{D}_{\ell d}(\epsilon)}\ee
when $N_{\ell d}$ is thought as a sub-bundle of $\mathcal{D}_{\ell d}^{1}\times\R^{nd+1}$.
When $j=\ell d+1$ we have 
\[P_{\ell d+1}(\epsilon)=N_{\ell d}|_{\mathcal{D}_{\ell d+1}(\epsilon)}\oplus E(\epsilon)|_{\mathcal{D}_{\ell d+1}(\epsilon)} \]
because the quadratic form $\omega q$ has two diagonal blocks. In particular, denoting by $\gamma_{\ell d}$ and by $\eta(\epsilon)$  the first Stiefel--Whitney classe of $N_{\ell d}$ and $E(\epsilon)$ respectively, by naturality of characteristic classes we have the following identities:
\be \nu_{\ell d}=\gamma_{\ell d}|_{\mathcal{D}_{\ell d}(\epsilon)}\quad \textrm{and}\quad \nu_{\ell d+1}=\gamma_{\ell d}|_{\mathcal{D}_{\ell d+1}(\epsilon)}+\eta(\epsilon)|_{\mathcal{D}_{\ell d+1}(\epsilon)}.\ee
Notice that both $\nu_{\ell d}$ and $\nu_{\ell d +1}$ contain the restriction of the same class $\gamma_{\ell d}$ as a summand.
Thanks to Theorem \ref{thm:spectral}, the second differential  $d_2^{*, \ell d}(\epsilon)$ can be written as
\begin{align}
d_2^{*, \ell d}(\epsilon)\xi&=\partial(\xi\smallsmile \nu_{\ell d+1})+\partial \xi\smallsmile \nu_{\ell d}\\
&=\partial\left(\xi\smallsmile\left(\gamma_{\ell d}|_{\mathcal{D}_{\ell d+1}(\epsilon)}+\eta(\epsilon)|_{\mathcal{D}_{\ell d+1}(\epsilon)}\right)\right)+\partial \xi\smallsmile \gamma_{\ell d}|_{\mathcal{D}_{\ell d}(\epsilon)}\\
&=\partial\left(\xi \smallsmile \eta(\epsilon)|_{\mathcal{D}_{\ell d+1}(\epsilon)}\right)=\partial\left(\xi \smallsmile \eta(\epsilon)\right),
\end{align}
where we have used Remark \ref{rem:delta} (taking $(Z,X,Y)=(\Omega^{\ell d}(\epsilon),\Omega^{\ell d+1}(\epsilon),\Omega^{\ell d+2}(\epsilon))$ and $(\tilde{X},\tilde{Z},A)=(\mathcal{D}_{\ell d+1}(\epsilon),\mathcal{D}_{\ell d}(\epsilon),\mathcal{D}_{\ell d}^1)$) and the fact that we are working with $\mathbb{Z}_2$--coefficients.

On the other hand the bundle $E(\epsilon)$ is trivial, 
because the space $\Omega$ is contractible and the class $\eta(\epsilon)$ is zero. Therefore the differential is zero and this concludes the proof.
\end{proof}
\begin{remark}It is actually possible to prove the stabilization of the second differential, up to subsequences, in a simpler way. In fact $\{d_2(\epsilon)^{*, \ell d}: H^*(A_\ell, B_\ell)\to H^{*+2}(B_\ell, A_{\ell+1})\}_{d\geq 0}$ is a sequence of maps between finite dimensional $\mathbb{Z}_2$-vector spaces, i.e. 
\be d_2(\epsilon)^{*, \ell d}\in \mathrm{Hom}(\mathbb{Z}_2^{a}, \mathbb{Z}_2^b),\ee
where $a=\dim_{\mathbb{Z}_2}(H^*(A_\ell, B_\ell))$ and $b=\dim_{\mathbb{Z}_2}(H^{*+2}(B_\ell, A_{\ell+1})).$
Since $\mathrm{Hom}(\mathbb{Z}_2^{a}, \mathbb{Z}_2^b)\simeq \mathbb{Z}_2^{a\times b}$ is a finite set, then up to subsequences $d_2(\epsilon)^{*, \ell d}$ is eventually constant.
\end{remark}
\subsubsection{The asymptotic for the Betti numbers of the chamber}

\begin{figure}
\centering
\begin{tikzpicture}[thick,scale=0.7, every node/.style={scale=0.7}][line cap=round,line join=round,>=triangle 45,x=1.0cm,y=1.0cm]
\clip(0.,-1.) rectangle (21.,16.5);
\fill[line width=0.5pt,color=mycyan,fill=mycyan,fill opacity=0.10000000149011612] (3.,15.802222222222214) -- (5.,15.796296296296289) -- (5.,15.597777777777797) -- (3.,15.600740740740736) -- cycle;
\fill[line width=0.5pt,color=mycyan,fill=mycyan,fill opacity=0.10000000149011612] (3.,9.304444444444455) -- (5.,9.295555555555568) -- (5.,9.1) -- (3.,9.1) -- cycle;
\fill[line width=0.5pt,color=mycyan,fill=mycyan,fill opacity=0.10000000149011612] (3.,8.904444444444456) -- (3.,9.1) -- (5.,9.1) -- (5.,8.89555555555557) -- cycle;
\fill[line width=0.5pt,color=mycyan,fill=mycyan,fill opacity=0.10000000149011612] (3.,6.602222222222234) -- (5.,6.602222222222233) -- (5.,6.397777777777787) -- (3.,6.3977777777777876) -- cycle;
\fill[line width=0.5pt,color=mycyan,fill=mycyan,fill opacity=0.10000000149011612] (3.,6.202222222222232) -- (3.,6.3977777777777876) -- (5.,6.397777777777787) -- (5.,6.2022222222222325) -- cycle;
\fill[line width=0.5pt,color=mycyan,fill=mycyan,fill opacity=0.10000000149011612] (3.,3.9) -- (5.,3.9) -- (5.,3.7044444444444475) -- (3.,3.6955555555555595) -- cycle;
\fill[line width=0.5pt,color=mycyan,fill=mycyan,fill opacity=0.10000000149011612] (3.,3.5) -- (3.,3.6955555555555595) -- (5.,3.7044444444444475) -- (5.,3.5) -- cycle;
\fill[line width=0.5pt,color=mycyan,fill=mycyan,fill opacity=0.10000000149011612] (3.,1.1977777777777803) -- (5.,1.2092592592592668) -- (5.,1.) -- (3.,1.) -- cycle;
\fill[line width=0.5pt,color=mycyan,fill=mycyan,fill opacity=0.10000000149011612] (8.,13.) -- (20.,13.) -- (20.,11.) -- (8.,11.) -- cycle;
\draw [line width=0.5pt] (3.,1.)-- (5.,1.);
\draw [line width=0.5pt] (3.,1.)-- (3.,16.);
\draw [line width=0.3pt,dash pattern=on 5pt off 5pt] (5.,16.)-- (5.,1.);
\draw [line width=0.5pt] (3.,1.1977777777777805)-- (5.,1.209259259259267);
\draw [line width=0.5pt] (3.,3.5)-- (5.,3.5);
\draw [line width=0.5pt] (3.,3.69555555555556)-- (5.,3.704444444444448);
\draw [line width=0.5pt] (3.,3.9)-- (5.,3.9);
\draw [line width=0.5pt] (5.,6.2022222222222325)-- (3.,6.202222222222232);
\draw [line width=0.5pt] (5.,6.397777777777787)-- (3.,6.3977777777777876);
\draw [line width=0.5pt] (3.,6.602222222222232)-- (5.,6.602222222222231);
\draw [line width=0.5pt] (5.,8.895555555555568)-- (3.,8.904444444444456);
\draw [line width=0.5pt] (3.,9.1)-- (5.,9.1);
\draw [line width=0.5pt] (5.,9.295555555555566)-- (3.,9.304444444444457);
\draw [line width=0.3pt,dash pattern=on 5pt off 5pt] (1.1,10.0)-- (1.1,15.0);
\draw [line width=0.3pt,dash pattern=on 5pt off 5pt] (2.1,10.0)-- (2.1,15.0);
\draw (0.9872222222222141,15.962592592592616) node[anchor=north west] {$n\cdot d -1$};
\draw (0.996111111111103,6.513703703703718) node[anchor=north west] {$2\cdot d-1$};
\draw (0.8538888888888808,9.269259259259275) node[anchor=north west] {$3\cdot d -1$};
\draw [line width=0.3pt,dash pattern=on 5pt off 5pt] (3.,16.)-- (5.,16.);
\draw [line width=0.5pt] (3.,15.600740740740736)-- (5.,15.5977777777778);
\draw [line width=0.5pt] (3.,15.802222222222216)-- (5.,15.79629629629629);
\draw (1.3961111111111026,3.829259259259271) node[anchor=north west] {$d-1$};
\draw (2.0361111111111025,1.3848148148148238) node[anchor=north west] {$0$};
\draw [rotate around={-2.6978056333162965:(4.665555555555554,9.191481481481492)},line width=0.5pt,color=mycyan] (4.665555555555554,9.191481481481492) ellipse (2.261128078549663cm and 1.2068554957432387cm);
\draw [->,line width=0.5pt, color=mycyan] (6.489615740370821,9.842763696128216) -- (8.,11.);
\draw (8.231666666666653,12.824814814814834) node[anchor=north west] {$H^{0}(B_3, A_{4};\mathbb{Z}_2)$};
\draw (14.276111111111094,11.909259259259278) node[anchor=north west] {$H^{2}(A_3,B_3;\mathbb{Z}_2)$};
\draw [line width=0.5pt,color=blue] (11.135555555555554,12.561481481481493)-- (15.365,12.569259259259283);
\draw [->,line width=0.5pt,color=blue] (15.365,12.569259259259283) -- (15.365,11.769259259259282);
\draw [color=blue](12.516111111111096,12.664814814814834) node[anchor=north west] {$d_2$};
\draw [->,line width=0.5pt,color=mygray] (9.480555555555547,12.130740740740771) -- (18.440555555555555,10.4);
\draw (18.51611111111109,10.771481481481498) node[anchor=north west] {$0$};
\draw [->,line width=0.5pt,color=blue] (3.1155555555555545,9.181481481481491) -- (3.915555555555555,8.981481481481492);
\draw [->,line width=0.5pt,color=mygray] (3.115555555555563,9.18148148148149) -- (4.215555555555555,8.661481481481491);
\draw [->,line width=0.5pt] (3.115555555555568,9.181481481481487) -- (4.495555555555555,8.201481481481492);
\draw [line width=0.3pt,dash pattern=on 5pt off 5pt] (1.6,10.0)-- (1.6,15.0);
\draw [line width=0.3pt,dash pattern=on 5pt off 5pt] (4.495555555555555,8.201481481481492)-- (6.655555555555556,6.501481481481493);
\draw [line width=0.5pt] (5.,15.79629629629629)-- (5.,15.5977777777778);
\draw [line width=0.5pt] (4.997138447231981,9.295568273567868)-- (4.997170501965439,8.895568131102388);
\draw [line width=0.5pt] (4.997169143665958,6.602222222222231)-- (4.997169143665958,6.2022222222222325);
\draw [line width=0.5pt] (4.997169143665958,3.9)-- (5.,3.5);
\draw [line width=0.5pt] (5.,1.209259259259267)-- (5.,1.);
\draw [line width=0.3pt,dash pattern=on 5pt off 5pt] (3.1,0.45)-- (5.,0.45);
\draw [line width=0.3pt] (3.,0.6)-- (3.,0.3);
\draw [line width=0.3pt] (5.0,0.6)-- (5.0,0.3);
\draw (3.4,0.3) node[anchor=north west] {${n \choose 2}+1$};
\draw [line width=0.5pt] (8.,13.)-- (8.,10.);
\draw [line width=0.5pt] (8.,10.)-- (20.,10.);
\draw [line width=0.5pt] (20.,10.)-- (20.,13.);
\draw [line width=0.5pt] (20.,13.)-- (8.,13.);
\draw [line width=0.5pt] (8.,13.)-- (8.,14.);
\draw [line width=0.5pt] (8.,14.)-- (20.,14.);
\draw [line width=0.5pt] (20.,14.)-- (20.,13.);
\draw [line width=0.5pt] (11.,14.)-- (11.,10.);
\draw [line width=0.5pt] (14.,14.)-- (14.,10.);
\draw [line width=0.5pt] (17.,14.)-- (17.,10.);
\draw [line width=0.5pt] (8.,12.)-- (20.,12.);
\draw [line width=0.5pt] (8.,11.)-- (20.,11.);
\draw [line width=0.5pt] (20.,13.)-- (8.,13.);
\draw [color=mygray](11.947222222222207,11.615925925925945) node[anchor=north west] {$d_3$};
\begin{scriptsize}
\draw [fill=black] (3.,1.) circle (2.5pt);
\draw [fill=black] (3.,3.5) circle (2.5pt);
\draw [fill=black] (3.,6.202222222222232) circle (2.5pt);
\draw [fill=black] (3.,8.904444444444456) circle (2.5pt);
\draw [fill=black] (3.,15.600740740740736) circle (2.5pt);
\end{scriptsize}
\end{tikzpicture}
       \caption{This is a schematic image of the $E_2(\epsilon)$ term of the spectral sequence we are describing. The coloured parts correspond to the elements $E_2^{i,j}(\epsilon)$ of the spectral sequence which are possibly non-zero.}\label{fig:spectral}
\end{figure}
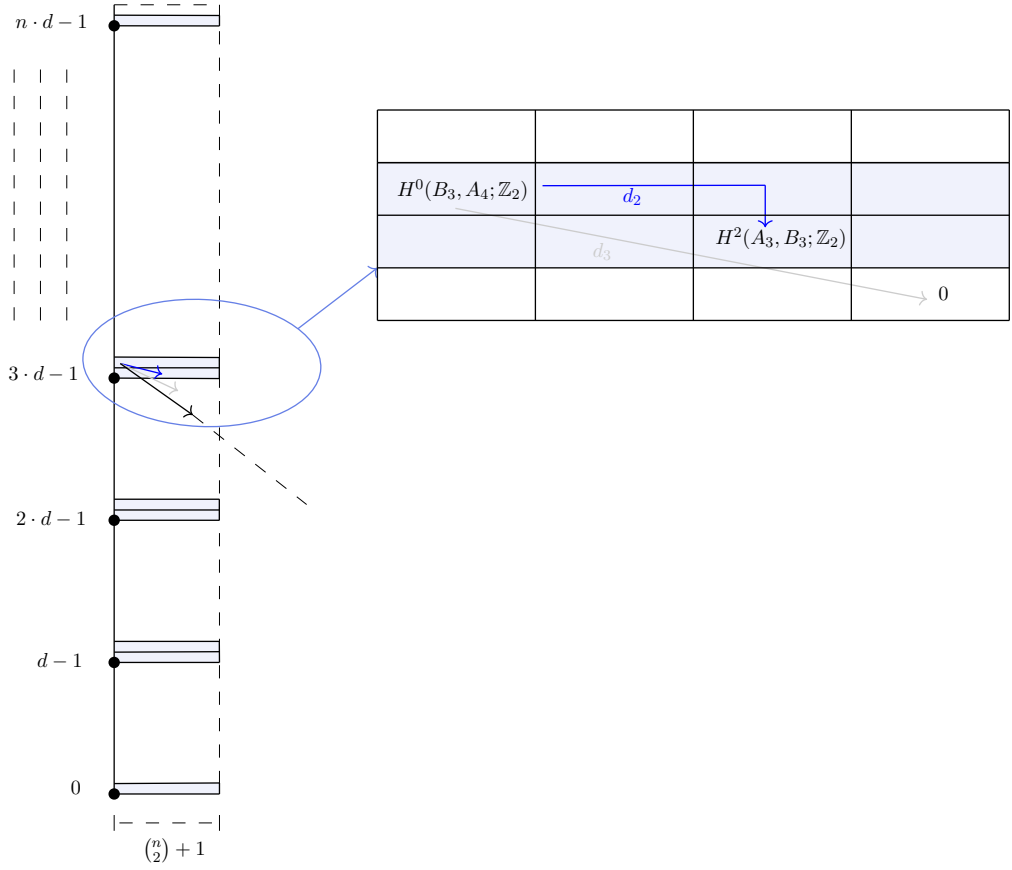
We are now in the position of proving the main theorem of this section, namely Theorem \ref{theorem:floer}.
\begin{proof}[Proof of Theorem \ref{theorem:floer}]The proof of this theorem is based on the analysis of the structure of the spectral sequence and its last page. First observe that by Proposition \ref{proposition:ed}, for all $k\geq 0$ and for all $\epsilon<\epsilon(d)$ we have:
\be b_k(W_{\sigma, d})=b_k(W_{\sigma, d}(\epsilon)).\ee
For the rest of the proof we will take $\epsilon\leq \min\{\epsilon(d), \epsilon_2\}$, where $\epsilon_2\leq \epsilon_1$ is given by Proposition \ref{proposition:differential}.
Lemma \ref{lemma:sphere} implies now that:
\be b_k(W_{\sigma, d})=\frac{1}{2}b_k(V_{\sigma, d}(\epsilon)).\ee
On the other hand, since the involved spaces are semialgebraic sets (hence triangulable), the Betti numbers of $V_{\sigma, d}(\epsilon)$ are related to those of $S^N\backslash V_{\sigma, d}(\epsilon)$ through Alexander duality:
\be \tilde b_k(V_{\sigma, d}(\epsilon))=\tilde b_{N-k-1}(S^N\backslash V_{\sigma, d}(\epsilon)).\ee
Finally, denoting by $e_{\infty}^{i,j}(\epsilon)$ the dimension of $E_\infty^{i,j}(\epsilon)$, where $E_\infty(\epsilon)$ is the last page of the spectral sequence from Theorem \ref{thm:spectral}, we have:
\be \tilde b_{N-k-1}(S^N\backslash V_{\sigma, d}(\epsilon))=\sum_{i+j=N-k-1}e_{\infty}^{i,j}(\epsilon).\ee
Collecting all this together,  for $\epsilon\leq \min\{\epsilon(d), \epsilon_2\}$, we have:
\be\label{eq:betti}b_k(W_{\sigma, d})= \frac{1}{2}\left\{\begin{matrix}
1+\displaystyle\sum\limits_{i+j=N-1}e_{\infty}^{i,j}(\epsilon)&\textrm{if $k=0$}\\[7pt]
\displaystyle\sum\limits_{i+j=N-k-1}e_{\infty}^{i,j}(\epsilon)&\textrm{if $0<k<N-1$}\\[7pt]
-1+e_{\infty}^{0,0}(\epsilon)&\textrm{if $k=N-1$}\end{matrix}\right.
\ee 

Observe now that Proposition \ref{propo:structure} implies that in the second page of the spectral sequence only the first ${n\choose 2}+1$ columns are nonzero (i.e. those with $0\leq i\leq {n\choose 2}$); moreover in the second page only the rows with $j=\ell d$ and $j=\ell d-1$ are potentially nonzero. Therefore, for $d\geq {n\choose 2}+2$ all the higher differentials are zero and:
\be E_{\infty}(\epsilon)=E_3(\epsilon).\ee
On the other hand Proposition \ref{proposition:differential} implies that $E_2(\epsilon)=E_3(\epsilon)=E_\infty(\epsilon)$, with the last equality for $d\geq {n\choose 2}+2$. 

Looking now at the top two rows of $E_\infty$, by Corollary \ref{corollary:contractible} we know that:
\be E_2^{i, {dn}}(\epsilon)=0\quad \forall i\geq 0, \quad E_2^{0, {dn-1}}(\epsilon)\simeq \mathbb{Z}_2\quad \textrm{and}\quad E_2^{i, {dn-1}}(\epsilon)=0\quad \forall i\geq 1.\ee
Thus, for $d\geq {n\choose 2}+2$,
\be e_\infty^{i, {dn}}(\epsilon)=0\quad \forall i\geq 0, \quad e_\infty^{0, {dn-1}}(\epsilon)=1\quad \textrm{and}\quad e_\infty^{i, {dn-1}}(\epsilon)=0\quad \forall i\geq 1.\ee
From this we immediately see that, for $d\geq {n\choose 2}+2$ we have:
\be b_0(V_{\sigma,d}(\epsilon))=1\quad \textrm{and} \quad b_k(V_{\sigma, d}(\epsilon))=0 \quad \forall 1\leq k\leq {n\choose 2}.\ee
This already proves:
\be \label{eq:betti0}b_0(W_{\sigma, d})=1\quad \textrm{and}\quad b_k(W_{\sigma, d})=0\quad \forall 1\leq k\leq {n\choose 2}.\ee
Observe now that the fact that the rows with $j=\ell d$ and $j=\ell d-1$ in $E_2=E_{\infty}$ are the only possibly non-zero rows for $\ell=0, \ldots, n$, influences the Betti numbers $b_k(W_{\sigma, d})$ with 
\be \label{eq:nonzero}k=md, \ldots, md-{n\choose 2}-1,\ee
where $m=n-\ell.$
We define now, for $m=1, \ldots, n-1$:
\be Q_{G, m}(t)=\frac{1}{2}\cdot\left(e_\infty^{0, (n-m)d-1}t^{\binom{n}{2}+1}+\sum_{i=1}^{{n\choose 2}}\left(e_\infty^{\binom{n}{2}-i, (n-m)d}+e_\infty^{\binom{n}{2}-i+1, (n-m)d-1}\right) t^{i}+e_\infty^{\binom{n}{2}, (n-m)d}\right).\ee
The $i$--th coefficient of the polynomial $Q_{G,m}$ is $b_{m d-{n\choose 2}+i-1}(W_{G, d})$. 
In principle we would have to consider also the case $m=n$, but Theorem \ref{thm:highbettinumbers} (which we prove below) guarantees that there is no homology in dimension greater than $(n-1)d-n+1.$
By \eqref{eq:nonzero}, the conclusion of the theorem follows.


\end{proof}
\begin{remark}As we noticed in the introduction, since the polynomials $Q_{G, 1}, \ldots, Q_{G, n-1}$ do not depend on $d$, but only on the graph, and since these polynomials are the same for isomorphic graphs, they define a graph invariant. Similarly the same is true for the Floer number $\beta(G)$, which is just the sum of their coefficients. Of course the polynomials are finer invariants, however we do not have a clear interpretation of these quantities. 
\end{remark}

\begin{remark}From \eqref{eq:betti0} it immediately follows that for $d\geq {n\choose 2}+2$ each sign condition is connected. In particular, if $d\geq {n\choose 2}+2$, two $\R^d$--geometric graphs on $n$ vertices are isomorphic if and only if they are rigid isotopic.
We will actually sharpen this in Corollary \ref{cor:conn1} below.
\end{remark}

\section{Homotopy groups of the chambers}
We turn our attention now to proving Theorem \ref{thm:zerohomotopy}, and we start by introducing some notation. For every $0\leq r\leq \max\{n, d\}$ let us denote by $(\R^{d\times n})_r$ the set of matrices of rank $r$:
\be (\R^{d\times n})_r=\left\{P\in \R^{d\times n}\,\big|\, \textrm{rk}(P)=r\right\}\subseteq \R^{d\times n}.\ee
When $r=n$ we have that $(\R^{d\times n})_n$ deformation retracts the Stiefel manifold of orthonormal $n$--frames in $\R^n$ (the retraction is given by the Gram-Schmidt procedure; in the case $d=n$ this is simply the deformation retraction of $\mathrm{GL}(n, \R)$ onto $O(n)$). We recall that for $k\leq d-n-1$ this Stiefel manifold is $k$--connected, see \cite[Example 4.53]{Hatcher}:
\be\label{eq:V} \pi_k((\R^{d\times n})_n)=0\quad \quad\textrm{if $k\leq d-n-1$}. \ee
We will need the next elementary lemma.
\begin{lemma}\label{lemma:codim}The complement of $(\R^{d\times n})_n$ can be written as a finite union of smooth submanifolds of codimension at least $d-n+1$.
\end{lemma}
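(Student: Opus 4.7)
The plan is to stratify $\R^{d\times n}$ by rank and use the classical fact that each rank stratum is a smooth submanifold of known codimension. Specifically, I would write
\[
\R^{d\times n}\setminus (\R^{d\times n})_n = \bigsqcup_{r=0}^{n-1}(\R^{d\times n})_r,
\]
so it suffices to show that each $(\R^{d\times n})_r$ is a smooth submanifold and to bound its codimension from below.

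For the smooth manifold structure, I would fix $P_0\in (\R^{d\times n})_r$ and, after permuting rows and columns, assume $P_0$ has a nonzero $r\times r$ minor in the top-left corner. In a neighborhood $U$ of $P_0$ this minor remains invertible, so any $P\in U$ can be written in block form $P=\bigl(\begin{smallmatrix}A & B\\ C & D\end{smallmatrix}\bigr)$ with $A\in\mathrm{GL}(r,\R)$. A standard linear algebra computation shows that $P$ has rank exactly $r$ inside $U$ if and only if the Schur complement vanishes, $D-CA^{-1}B=0$. This cuts out $(\R^{d\times n})_r\cap U$ as the graph of a smooth map over the open set of $(A,B,C)$ with $A$ invertible, hence as a smooth submanifold of $U$. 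The free parameters $A,B,C$ contribute $r^2+r(n-r)+(d-r)r=r(n+d-r)$ dimensions, giving codimension
\[
dn - r(n+d-r) = (d-r)(n-r).
\]

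Finally, for $0\leq r\leq n-1$ we have $n-r\geq 1$ and $d-r\geq d-n+1$, so $(d-r)(n-r)\geq d-n+1$, giving the desired bound. The stratification is finite (indexed by $r=0,\dots,n-1$), which yields the stated decomposition.

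There is no real obstacle; the only step worth flagging is the verification that the local description via the Schur complement glues to a globally defined smooth submanifold structure on $(\R^{d\times n})_r$. This follows because the condition $\mathrm{rk}(P)=r$ is invariant under the choice of invertible $r\times r$ minor, and the local parametrizations from different choices of minor differ by smooth changes of coordinates on the parameter space. Alternatively, one could realize $(\R^{d\times n})_r$ as an orbit of the smooth $\mathrm{GL}(d,\R)\times\mathrm{GL}(n,\R)$-action on $\R^{d\times n}$, which immediately gives the smooth manifold structure with the same dimension count.
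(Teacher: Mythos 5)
Your proof is correct and follows essentially the same route as the paper: stratify the complement by rank and bound the codimension $(d-r)(n-r)$ of each stratum from below for $r\leq n-1$. The only difference is that the paper simply cites the standard fact that $(\R^{d\times n})_r$ is a smooth submanifold of codimension $(n-r)(d-r)$ (from Hirsch), whereas you supply a self-contained Schur-complement argument for it.
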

\begin{proof}
Recall that for every $0\leq r\leq \max\{n, d\}$ the codimension of $(\R^{d\times n})_r$ in $\R^{d\times n}$ equals $(n-r)(d-r)$ (see \cite[Chapter 3, Section 2, Exercise 4]{Hirsch}) and, in particular if $r\leq n-1$:
\be \label{eq:codim}\mathrm{codim}_{\R^{d\times n}}(\R^{d\times n})_r\geq d-n+1.\ee
Now, the complement of $(\R^{d\times n})_n$ in $\R^{d\times n}$ is a semialgebraic set that can be written as:
\be  \R^{d\times n}\backslash(\R^{d\times n})_n=\coprod_{r=0}^{n-1}(\R^{d\times n})_r\ee
and it is therefore a semialgebraic set of codimension at least $d-n+1$. 
\end{proof}

We will now prove a sequence of results on the homotopy groups of the chambers. These results will imply Theorem \ref{thm:zerohomotopy}. Since $W_{G, d}$ might not be connected if $d\leq{n\choose 2}{+1}$, part of these results are formulated using the set $[S^k, W_{G, d}]$ of homotopy classes of continuous maps from $S^k$ to $W_{G, d}$, instead of the homotopy group $\pi_k(W_{G, d})$.  As soon as $W_{G, d}$ becomes connected and simply connected, we can endow $[S^k, W_{G, d}]$ with a group structure. To stress this subtlety we will keep both notations.
\begin{proposition}\label{propo:isok}
If $d\geq k+n+1$ the inclusion
\be i:W_{G, d}\cap(\R^{d\times n})_n\longhookrightarrow W_{G, d}\ee
induces a bijection between $[S^k, W_{G, d}\cap(\R^{d\times n})_n]$ and $[S^k, W_{G, d}]$.
\end{proposition}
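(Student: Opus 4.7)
The plan is to prove the proposition by a transversality / general position argument, exploiting the fact that the rank strata forming the complement of $(\R^{d\times n})_n$ have sufficiently high codimension compared to $k$. The complement decomposes as the disjoint union of smooth semialgebraic strata $(\R^{d\times n})_r$ for $0 \leq r \leq n-1$, each of codimension $(n-r)(d-r) \geq d-n+1$ in $\R^{d\times n}$, by Lemma \ref{lemma:codim}. Since $W_{G,d}$ is open in $\R^{d\times n}$, the trace of these strata on $W_{G,d}$ retains codimension at least $d-n+1$. Under the hypothesis $d \geq k+n+1$ this codimension is $\geq k+2$.

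For surjectivity of $i_*\colon [S^k, W_{G,d}\cap(\R^{d\times n})_n]\to[S^k, W_{G,d}]$, I would take any continuous $f\colon S^k\to W_{G,d}$ and first approximate it by a smooth map, which is possible up to small homotopy since $W_{G,d}$ is an open subset of a Euclidean space. Then I would apply the standard transversality theorem to perturb the smooth $f$, stratum by stratum (starting with the smallest rank), so that it becomes transverse to each $(\R^{d\times n})_r\cap W_{G,d}$. Because $\dim S^k = k$ while each stratum has codimension $\geq d-n+1 \geq k+2 > k$, transversality forces the preimage of each stratum under the perturbed map to be empty. A sufficiently small perturbation stays inside the open set $W_{G,d}$, so the resulting map represents the same class in $[S^k, W_{G,d}]$ and lands in $(\R^{d\times n})_n$.

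For injectivity, suppose $f_0, f_1\colon S^k\to W_{G,d}\cap(\R^{d\times n})_n$ and let $H\colon S^k\times[0,1]\to W_{G,d}$ be a homotopy between $i\circ f_0$ and $i\circ f_1$. I would apply relative transversality to $H$, keeping the boundary $S^k\times\{0,1\}$ fixed (which is legal since $f_0$ and $f_1$ already avoid every stratum of positive codimension). The source has dimension $k+1$, and each stratum has codimension $\geq d-n+1 \geq k+2 > k+1$, so the perturbed $H$ also misses every stratum. This yields a homotopy entirely inside $W_{G,d}\cap(\R^{d\times n})_n$, proving $[f_0]=[f_1]$ in $[S^k, W_{G,d}\cap(\R^{d\times n})_n]$.

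The main technical obstacle is justifying transversality against the stratified set $\bigcup_{r<n}(\R^{d\times n})_r$, since the individual rank loci are only locally closed rather than closed submanifolds. This can be handled either by an inductive argument on $r$ starting from $r=0$ (whose closure is the single stratum itself), or by invoking a transversality theorem for Whitney-stratified semialgebraic sets; the rank stratification of matrices is a classical example of such a stratification. Once general position is available, the dimension count $k+1 \leq d-n < d-n+1$ is precisely what forces the perturbed maps and homotopies to avoid the singular locus, and the bound $d\geq k+n+1$ is seen to be sharp for the injectivity step.
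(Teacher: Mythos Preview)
Your proposal is correct and follows essentially the same transversality argument as the paper: both use Lemma \ref{lemma:codim} to bound the codimension of the rank strata by $d-n+1$, approximate by smooth maps, and make them transverse to these strata so that the dimension count forces avoidance. The only minor difference is in the injectivity step: you invoke relative transversality to keep the boundary $S^k\times\{0,1\}$ fixed, whereas the paper perturbs the homotopy freely to a nearby $\widetilde F$ transverse to the strata and then argues that $\widetilde F(\cdot,0)$ and $\widetilde F(\cdot,1)$, being $C^0$-close to $f_0$ and $f_1$, are already homotopic to them inside $W_{G,d}\cap(\R^{d\times n})_n$. Both variants are standard and equally valid; yours is slightly cleaner, while the paper's avoids the need to invoke a relative transversality statement.
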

\begin{proof}We need to prove that the map $i_*:[S^k, W_{G, d}\cap(\R^{d\times n})_n]\to [S^k, W_{G, d}]$ induced by the inclusion is a bijection if $k\leq d-n-1.$

We first prove the surjectivity of $i_*$. Let $f_0:S^k\to W_{G, d}$ be a map representing an element of $[S^k, W_{G, d}].$ Since $W_{G, d}$ is open, up to homotopies, we can assume that the map $f_0$ is smooth.
Moreover, by \cite[Chapter 3, Theorem 2.5]{Hirsch}, the map $f_0$ is homotopic to a map $f_1:S^k\to W_{G, d}$ which is transversal to all the strata of the complement of $(\R^{d\times n})_n$. If now $k<d-n+1$, the transversality condition and Lemma \ref{lemma:codim} imply that the image of $f_1$ does not intersect these strata; therefore $f_1:S^k\to W_{G, d}\cap(\R^{d\times n})_n$ and $i_*$ is surjective. (Notice that for the surjectivity we only need $d\geq k+n$.)

For the injectivity we argue similarly. Let $f_0, f_1:S^k\to W_{G, d}\cap(\R^{d\times n})_n$ be two maps such that $i\circ f_0:S^k\to W_{G,d}$ is homotopic to $i\circ f_1:S^k\to W_{G,d}$. This means that there exists a map $F:S^k\times I\to W_{G, d}$ such that $F(\cdot, 0)=i\circ f_0 $ and $F(\cdot, 1)=i\circ f_1$. Now, we can approximate $F$ with a new map $\widetilde{F}:S^k\times I$ which is smooth, homotopic to $F$ and $C^0$ arbitrarily close to it, and transversal to all the strata of the complement of $(\R^{d\times n})_n$.
By Lemma \ref{lemma:codim}, if $k+1<d-n+1$ this implies that the image of $\widetilde{F}$ does not intersect the complement of $(\R^{d\times n})_n$. In particular we have a homotopy between $\widetilde{F}(\cdot, 0)$ and $\widetilde{F}(\cdot, 1)$ all contained in $W_{G, d}\cap(\R^{d\times n})_n$. On the other hand, since both $F(\cdot, 0)$ and $F(\cdot, 1)$ miss the complement of $(\R^{d\times n})_n$, which is closed, by compactness of $S^k$, any two maps $C^0$ sufficiently close to these maps will be homotopic to them and will also miss this complement. In particular, if $\widetilde{F}$ is sufficiently close to $F$, $F(\cdot, 0)$ is homotopic to $\widetilde{F}(\cdot, 0)$, $F(\cdot, 1)$ is homotopic to $\widetilde{F}(\cdot, 1)$ and these homotopies miss the complement of $(\R^{d\times n})_n$. In this way we have build a homotopy between $f_0$ and $f_1$ already in $W_{G, d}\cap(\R^{d\times n})_n$, i.e. $i_*$ is injective.
\end{proof}
\subsubsection{Some useful maps}\label{sec:useful}We introduce now some useful maps. First recall the ``Gram-Schmidt'' map $\sigma:\R^{d\times n}\to \R^{d\times n}$ which orthonormalizes the columns of a matrix $P\in \R^{d\times n}$ and is defined by:
\be \sigma(P)=P(P^TP)^{-1/2}.\ee
Since the columns of $\sigma(P)$ are orthonormal, it follows that
\be \label{eq:s1}\sigma(P)^T\sigma(P)=\mathbf{1}_{n}.
\ee
Moreover $\sigma(P)\sigma(P)^T$ is the orthogonal projection on the span of the columns of $P$.

Let now $G$ be a geometric graph on $n$ vertices and $d\geq n$. Our first useful map is:
\be\label{eq:alpha}\begin{tikzcd}
{W_{G,n}\times (\R^{d\times n})_{n}} \arrow[rr, "\alpha"] &  & {W_{G, d}} \\
{(Q, P)} \arrow[rr, maps to]                            &  & (\sigma(P)Q)                   
\end{tikzcd}.
\ee
This map essentially takes a $n$--tuple of points in $\R^{n}$ and a linear space of dimension $n$ in $\R^d$ and puts these points isometrically on this linear space. We need to verify that the isomorphism class of the labeled graph is not changed, i.e. that $G(\sigma(P)Q)\simeq G(Q)\simeq G.$ This is true because all the relative distances of the points in $\sigma(P)Q$ are the same as the distances of the points in $Q$. More precisely, denote by $Q=(q_1, \ldots, q_n)$ and by $\sigma(P)Q=(p_1', \ldots, p_n')$, where $p_i'=\sigma(P)q_i$. Then, using \eqref{eq:s1}, we have:
\begin{align} \|p_i'-p_j'\|^2&=\|\sigma(P)(q_i-q_j)\|^2\\
&=(q_i-q_j)^T\sigma(P)^T\sigma(P)(q_i-q_j)\\&=(q_i-q_j)^T(q_i-q_j)\\
&=\|q_i-q_j\|^2.\end{align}
Since the relative distances between the points are the same, and $d\geq n$ it follows by Lemma \ref{lemma:identity} that $G(\sigma(P)Q)\simeq G(Q)$.

A second useful map is:
\be\begin{tikzcd}
{W_{G, d}\cap (\R^{d\times n})_n} \arrow[rr, "\beta"] &  & {W_{G,n}\times (\R^{d\times n})_{n}} \\
P \arrow[rr, maps to]                               &  & {(\sigma(P)^TP, P)}               
\end{tikzcd}.
\ee
This map ``decouples'' the geometric graph into its part in the span of its vertices and the span of the vertices. Also for this map we need to check that its first component has target in $W_{G, n}$. Again this follows from the fact that the mutual distances of the corresponding points are preserved. Denoting by $P=(p_1, \ldots, p_n)$, we have:
\begin{align} \|\sigma(P)^Tp_i-\sigma(P)^Tp_j\|^2&=(p_i-p_j)^T\sigma(P)\sigma(P)^T(p_i-p_j)\\&=(p_i-p_j)^T(p_i-p_j)\\&=\|p_i-p_j\|^2,\end{align}
where we have used the fact that $\sigma(P)\sigma(P)^T$ is the orthogonal projection onto the span of the columns of $P$. The claim follows again from Lemma \ref{lemma:identity}.

Observe that it follows immediately from the definition of the maps $\alpha $ and $\beta$ that:
\be
\label{lemma:ab}\alpha\circ\beta=i:W_{G, d}\cap (\R^{d\times n})_n\longhookrightarrow W_{G, d}.
\ee 
\subsubsection{Stabilization}
\begin{proposition}\label{propo:inj}If $d\geq k+n+1$ the map $j_*:[S^k,W_{G, d}]\longrightarrow[S^k, W_{G, d+1}]$ induced by the inclusion is injective. \end{proposition}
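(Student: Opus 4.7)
The plan is to leverage the maps $\alpha_d$ and $\beta_d$ of Section~\ref{sec:useful} to reduce the injectivity of $j_*$ to two simpler statements: one living in $W_{G,n}$ (which is trivially preserved by the stabilization $d \mapsto d+1$) and one in the full-rank locus $(\R^{d\times n})_n$ (which is highly connected).

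First, I would apply Proposition~\ref{propo:isok} (and the transversality argument of its proof) to deform the representatives $f_0, f_1 : S^k \to W_{G,d}$ into $W_{G,d} \cap (\R^{d\times n})_n$ and to deform any hypothetical homotopy $F : S^k \times I \to W_{G,d+1}$ between $j \circ f_0$ and $j \circ f_1$, rel $S^k \times \partial I$, into $W_{G,d+1} \cap (\R^{(d+1)\times n})_n$. Lemma~\ref{lemma:codim} together with the hypothesis $d \geq k+n+1$ provides enough codimension for both deformations.

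Next I would verify the commutativity of the square
\begin{equation}
\begin{tikzcd}
W_{G,d} \cap (\R^{d\times n})_n \arrow[r, "\beta_d"] \arrow[d, hook, "j"'] & W_{G,n} \times (\R^{d\times n})_n \arrow[d, "\mathrm{id} \times \iota"] \\
W_{G,d+1} \cap (\R^{(d+1)\times n})_n \arrow[r, "\beta_{d+1}"'] & W_{G,n} \times (\R^{(d+1)\times n})_n
\end{tikzcd}
\end{equation}
in which $\iota$ is the inclusion appending a zero row. Commutativity follows from the observation that padding $P$ by zeros preserves the Gram matrix $P^TP$, so $\sigma(jP) = j\sigma(P)$ and $\sigma(jP)^T(jP) = \sigma(P)^TP$. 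Composing $F$ with $\beta_{d+1}$ and projecting to the first factor therefore produces a homotopy in $W_{G,n}$ between $g_i := \mathrm{pr}_1 \circ \beta_d \circ f_i$, for $i = 0,1$.

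Finally, by \eqref{eq:V}, $(\R^{d\times n})_n$ is $(d-n-1)$-connected and in particular $k$-connected under our hypothesis, so the second-factor maps $h_i := \mathrm{pr}_2 \circ \beta_d \circ f_i$ are freely homotopic in $(\R^{d\times n})_n$. Pairing this homotopy with the one between $g_0$ and $g_1$ gives a homotopy $\beta_d \circ f_0 \simeq \beta_d \circ f_1$ in $W_{G,n} \times (\R^{d\times n})_n$; postcomposing with $\alpha_d$ and using the identity $\alpha_d \circ \beta_d = i_d$ from \eqref{lemma:ab} yields $f_0 \simeq f_1$ in $W_{G,d}$, as desired. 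The main obstacle I anticipate is the first step: one must execute the transversality deformation of $F$ rel $S^k \times \partial I$ while confirming that the resulting homotopy still connects the fixed endpoints $j\circ f_0$ and $j\circ f_1$. After this, the commutativity of the square is a short matrix computation and the connectivity of Stiefel manifolds is a citation.
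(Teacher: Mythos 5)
Your proposal is correct and follows essentially the same route as the paper: reduce to the full-rank locus via Proposition~\ref{propo:isok}, use the commutativity of the $\beta$-square with the stabilization inclusion to transport the homotopy of first components into $W_{G,n}$, handle the second components by the $k$-connectivity of the Stiefel manifold, and recombine via $\alpha\circ\beta = i$. The endpoint issue you flag in the transversality step is exactly what the injectivity half of Proposition~\ref{propo:isok} already resolves, so there is no gap.
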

\begin{proof}Let $g_0, g_1:S^k\to W_{G, d}$ be two continuous maps such that $j\circ g_0:S^k\to W_{G, d+1}$  and $j\circ g_1:S^k\to W_{G, d+1}$ are homotopic. Thanks to Proposition \ref{propo:isok} we can assume $g_0$ and $g_1$ to be elements of $[S^k, W_{G,d}\cap (\R^{d\times n })_n]$. We want to prove that $g_0$ and $g_1$ are homotopic.
For a map $f:S^k\to W_{G, d}$, we consider the following commutative diagram of maps:
\be
\begin{tikzcd}
                                                              & {(W_{G, n}\cap(\R^{n^2})_n)\times (\R^{d n})_n} \arrow[dd, "\alpha_1"] \arrow[r, "u", hook] & {(W_{G, n}\cap (\R^{n^2})_n)\times (\R^{(d+1)n})_n} \arrow[dd, "\alpha_2"] \\
                                                              &                                                                                             &                                                                            \\
S^k \arrow[r, "f"] \arrow[ruu, "{\beta_1\circ f=(f_1, f_2)}"] & {W_{G, d}\cap(\R^{dn})_n} \arrow[r, "j", hook] \arrow[uu, "\beta_1"', bend right=49]        & {W_{G,d+1}\cap(\R^{(d + 1)n})_n} \arrow[uu, "\beta_2"', bend right=49]          
\end{tikzcd}\ee
Here the maps $\alpha_1$ and $\alpha_2$ are the restriction of the maps defined in \eqref{eq:alpha} to the set of pairs $(Q,P)$ with $\textrm{rk}(Q)=n$; the values of these maps are in the set of matrices of rank $n$.

Since $\alpha_1\circ\beta_1=\mathrm{id}$, we can write the map $f=$ as:
\be f=\alpha_1\circ(\beta_1\circ f) =\alpha_1\circ (f_1, f_2),\ee
where $(f_1, f_2)$ are the components of $\beta_1\circ f$. 
We apply now the diagram to the map $f=g_0$ and $f=g_1$, writing them as:
\be g_i=\alpha_1\circ(\beta_1\circ g_i) =\alpha_1\circ (g_{i,1}, g_{i, 2}),\quad i=0, 1.\ee
We will prove that both components are homotopic $g_{0,1}\sim g_{1,1}$ and $g_{0, 2}\sim g_{1,2}$, which implies that $g_0$ is homotopic to $g_1$.


Since the map $j\circ g_0$ is homotopic to $j\circ g_1$, then also the first component of $\beta_2\circ j\circ g_0$ is homotopic to the first component of $\beta_2\circ j\circ g_1.$ But the first component of $\beta_2\circ j\circ g_0$ equals $g_{0, 1}$, the first component of $\beta_1\circ g_{0}$, and similarly for the first component of $\beta_2\circ j\circ g_1$. Therefore $g_{0,1}\sim g_{1, 1}.$ 

On the other hand the second components of $\beta_1\circ g_0, \beta_1\circ g_1:S^k\to (\R^{d\times n})_n$ are homotopic simply because $\pi_k((\R^{d\times n})_n)=0$ for $k\leq d-n-1.$ This concludes the proof.
\end{proof}
\begin{proposition}\label{propo:constant}For every $d\geq n$ the inclusion $W_{G, d}\hookrightarrow W_{G, d+n}$ is homotopic to a constant map.
\end{proposition}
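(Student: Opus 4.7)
The plan is to write down an explicit null-homotopy of the inclusion by using the $n$ extra coordinates available in $\R^{(d+n) \times n}$ to linearly blend each configuration with a fixed realization of $G$ in $\R^n$. Maehara's theorem guarantees that $W_{G,n}$ is nonempty (every graph on $n$ vertices is realizable in $\R^n$), so I can fix once and for all some $Q = (q_1, \ldots, q_n) \in W_{G,n}$, which will be the target of the constant map.

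Concretely, I would define $H : W_{G,d} \times [0,1] \to \R^{(d+n) \times n}$ by
\[
H(P, t) \;=\; \cos\!\Bigl(\tfrac{\pi t}{2}\Bigr) \begin{pmatrix} P \\ \mathbf{0}_{n \times n} \end{pmatrix} \;+\; \sin\!\Bigl(\tfrac{\pi t}{2}\Bigr) \begin{pmatrix} \mathbf{0}_{d \times n} \\ Q \end{pmatrix},
\]
so that $H(\cdot, 0)$ is the standard inclusion $W_{G,d} \hookrightarrow W_{G,d+n}$ while $H(\cdot, 1)$ is the constant map with value $\bigl(\begin{smallmatrix}\mathbf{0}\\Q\end{smallmatrix}\bigr) \in W_{G,d+n}$.

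The only thing left is to check that $H(P, t)$ really lies in $W_{G,d+n}$ for every $(P,t)$. The $i$-th column of $H(P,t)$ is $\bigl(\cos(\pi t/2)\, p_i,\, \sin(\pi t/2)\, q_i\bigr) \in \R^{d+n}$, and a direct computation using $\cos^2 + \sin^2 = 1$ gives
\[
\|H_i(P,t) - H_j(P,t)\|^2 - 1 \;=\; \cos^2\!\Bigl(\tfrac{\pi t}{2}\Bigr)\bigl(\|p_i - p_j\|^2 - 1\bigr) + \sin^2\!\Bigl(\tfrac{\pi t}{2}\Bigr)\bigl(\|q_i - q_j\|^2 - 1\bigr).
\]
Since both $P$ and $Q$ realize the same labeled graph $G$, Lemma~\ref{lemma:identity} says the two parenthesized quantities share the same nonzero sign for every pair $i<j$ (positive when $(i,j) \notin E(G)$, negative when $(i,j) \in E(G)$); hence any convex combination of them has that same sign, which is exactly the condition $H(P,t) \in W_{G,d+n}$.

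There is no real obstacle here: the argument reduces to the Pythagorean identity plus Maehara's theorem to supply $Q$. The hypothesis $d \geq n$ is used only to ensure that $W_{G,d}$ itself is nonempty so that the statement is nontrivial; once $Q \in W_{G,n}$ exists, the construction of the homotopy goes through for every $d$.
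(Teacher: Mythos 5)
Your proof is correct and is essentially the paper's own argument: the paper uses the coefficients $\sqrt{1-t}$ and $\sqrt{t}$ in place of $\cos(\pi t/2)$ and $\sin(\pi t/2)$, but in both cases the squared distances become convex combinations $(\|p_i-p_j\|^2-1)$ and $(\|r_i-r_j\|^2-1)$ of quantities with the same sign, and Lemma~\ref{lemma:identity} finishes the argument exactly as you describe.
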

Before we give the proof, let us observe that, since we do not know if $W_{G, d}$ is path-connected, then there are \emph{several} constant maps up to homotopy, one for each component; this Proposition tells us that all the maps $[S^k,W_{G,d}]$ are mapped to the \textit{same} constant map in $W_{G,d+n}$. This also tells us that $W_{G,d}$ is contained in just one connected component of $W_{G,d+n}$.
\begin{proof}Since for $d\geq n$ every graph is realizable as a geometric graph, pick $R=(r_1, \ldots, r_n)\in W_{G, n}$ by the previously-cited result of Maehara \cite{Maehara}.

Consider the homotopy $f_t:W_{G, d}\to \R^{(d+n)\times n},$ defined for $t\in [0,1]$ by
\be f_t(P)=\left(\begin{matrix}\sqrt{1-t}P\\ \sqrt{t} R\end{matrix}\right).\ee
With this choice:
\be f_0=i:W_{G, d}\longhookrightarrow W_{G, d+n}\subset \R^{(d+n)\times n}\quad \textrm{and}\quad f_1\equiv \left(\begin{matrix}0\\ R\end{matrix}\right)\in W_{G, d+n}.\ee
We only need to prove that $f_t(W_{G, d})\subseteq W_{G, d+n}$ for all $t\in [0,1].$ To this end let us write
\be f_t(P)=\left(p_1(t), \ldots, p_n(t)\right)=\left(\begin{matrix}\sqrt{1-t}p_1& \cdots&\sqrt{1-t}p_n\\\sqrt{t}r_1&\cdots& \sqrt{t}r_n\end{matrix}\right).\ee
Because of Lemma \ref{lemma:identity}, in order to show that $G(f_t(P))\equiv G$ it is enough to show that the signs of the family quadrics $\{\|p_i-p_j\|^2-1:\R^{(d+n)\times n}\to \R\}$ evaluated on $f_t(P)$ are constants. We have
\be \|p_i(t)-p_j(t)\|^2=(1-t)\|p_i-p_j\|^2+t\|r_i-r_j\|^2\ee
and therefore as 
\be \mathrm{sign}\left(\|p_i-p_j\|^2-1\right)=\mathrm{sign}\left(\|r_i-r_j\|^2-1\right), \ee
it must be the case that 
\be \mathrm{sign}\left(\|p_i(t)-p_j(t)\|^2-1\right)=\mathrm{sign}\left(\|p_i-p_j\|^2-1\right)=\mathrm{sign}\left(\|r_i-r_j\|^2-1\right).\ee
This concludes the proof.
\end{proof}
We are now ready to prove Theorem \ref{thm:zerohomotopy}.
\begin{proof}[Proof of Theorem \ref{thm:zerohomotopy}] We first prove that for $d\geq n+1$ the set $W_{G, d}$ is path connected. 
By Proposition \ref{propo:constant} the map $i_*:[S^0,W_{G, d}]\to [S^0,W_{G, d+n}]$ is the map that sends everything to the class of a constant map. On the other hand this map factors through the sequence of maps induced by the inclusions $W_{G, d}\longhookrightarrow W_{G, d+1}$
\be [S^0,W_{G, d}]\to [S^0,W_{G, d+1}]\to \cdots\to [S^0, W_{G, d+n}].\ee 
Each map in the previous sequence is an injection for $d\geq n+1$ by Proposition \ref{propo:inj}, therefore also $i_*:[S^0,W_{G, d}]\to [S^0,W_{G, d+n}]$ is an injection, and $[S^0, W_{G, d}]$ consists of only one element. Therefore $W_{G, d}$ is path connected.

We prove now that, for $d\geq n+2$, $W_{G, d}$ is also simply connected. By Proposition \ref{propo:constant} the map $i_*:[S^1,W_{G, d}]\to [S^1,W_{G, d+n}]$ is the map that sends everything to the class of a constant map. On the other hand this map factors through the sequence of maps induced by the inclusions $W_{G, d}\longhookrightarrow W_{G, d+1}$
\be [S^1,W_{G, d}]\to [S^1,W_{G, d+1}]\to \cdots\to [S^1, W_{G, d+n-1}]\to [S^1, W_{G, d+n}].\ee 
Each map in the previous sequence is an injection for $d\geq n+2$ by Proposition \ref{propo:inj}, therefore also $i_*:[S^1,W_{G, d}]\to [S^1,W_{G, d+n}]$ is an injection, and $[S^1, W_{G, d}]$ consists of only one element. Recall now that $[S^1, W_{G, d}]$ consists of the set of conjugacy classes in $\pi_1(W_{G, d})$ (we can omit the base point because $W_{G, d}$ is path-connected): the fact that $[S^1, W_{G, d}]$ consists of one element implies that there is only one conjugacy class in $\pi_1(W_{G, d})$, which means that $W_{G, d}$ is simply connected.

Let now $k\geq 2$ and $d\geq k+n+1 $. Since $\pi_1(W_{G, d})=0$ for $d\geq n+2 $, it follows that
\be [S^k, W_{G, d}]=\pi_k(W_{G, d})/\pi_1(W_{G, d})=\pi_k(W_{G, d}),\ee by \cite[Proposition 4A.2]{Hatcher}. By Proposition \ref{propo:constant} the map $i_*:\pi_k(W_{G, d})\to \pi_k(W_{G, d+n})$ is the zero map. On the other hand this map factors through the sequence of maps induced by the inclusions $W_{G, d}\longhookrightarrow W_{G, d+1}$
\be \pi_k(W_{G, d})\to \pi_{k}(W_{G, d+1})\to \cdots\to \pi_k(W_{G, d+n-1})\to \pi_k(W_{G, d+n}).\ee 
Each map in the previous sequence is an injection for $d\geq k+n+1$ by Proposition \ref{propo:inj}, therefore also $i_*:\pi_k(W_{G, d})\to \pi_k(W_{G, d+n})$ is an injection, and $\pi_{k}(W_{G, d})=0.$
\end{proof}
Notice that thanks to this theorem we have the following corollary.
\begin{corollary}\label{cor:conn1}
For $d\geq n+1$, for each labeled graph $G$ on $[n]$ the isomorphism class $W_{G,d}$ is connected.
\end{corollary}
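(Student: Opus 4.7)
The plan is to specialize Theorem~\ref{thm:zerohomotopy} to the case $k=0$. That theorem, applied with $k=0$, yields $\pi_0(W_{G,d})=0$ whenever $d\geq 0+n+1=n+1$, which by definition means that $W_{G,d}$ is path-connected. Since $W_{G,d}$ is an open subset of $\R^{d\times n}$ (it is the preimage of an open sign condition under a continuous map, by Lemma~\ref{lemma:identity}), it is locally path-connected, so path-connectedness coincides with connectedness, yielding the corollary.

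The whole content is therefore already contained in the $k=0$ step of the proof of Theorem~\ref{thm:zerohomotopy}. For completeness, the argument there factors the map
\be
[S^0,W_{G,d}]\longrightarrow[S^0,W_{G,d+n}]
\ee
as the composition induced by the sequence of inclusions
\be
W_{G,d}\longhookrightarrow W_{G,d+1}\longhookrightarrow\cdots\longhookrightarrow W_{G,d+n},
\ee
observes that each map in the induced sequence
\be
[S^0,W_{G,d}]\to[S^0,W_{G,d+1}]\to\cdots\to[S^0,W_{G,d+n}]
\ee
is injective provided $d\geq n+1$ (Proposition~\ref{propo:inj}), and that the full composite sends every class to the class of a constant map (Proposition~\ref{propo:constant}). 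Together these force $[S^0,W_{G,d}]$ to be a singleton, which is exactly the statement that $W_{G,d}$ is (path-)connected.

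There is no real obstacle here: the corollary is a direct repackaging of the $k=0$ instance of the preceding theorem, and requires nothing beyond noting that for open subsets of Euclidean space, path-connectedness and connectedness coincide.
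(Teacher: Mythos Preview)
Your proof is correct and matches the paper's approach exactly: the paper simply states that the corollary follows from Theorem~\ref{thm:zerohomotopy} (the $k=0$ case), and you have spelled out precisely that deduction, including the observation that open subsets of Euclidean space are locally path-connected so path-connectedness and connectedness coincide.
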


\subsection{The infinite--dimensional case}\label{sec:infinite}
The space $\R^{\infty\times n}$ is a pre-Hilbert space (since it is not complete) with respect to the natural scalar product\footnote{The completion of $\R^{\infty\times n}$ is $(\ell^2(\mathbb{N}))^n=\left\{p=(x_1, x_2, \ldots)\,\bigg|\, \sum_{k=1}^{\infty} x_k^2<\infty\right\}$. }. The notion of geometric graph and discriminant also makes sense in this infinite dimensional space. More precisely, given an element $P=(p_1, \ldots, p_n)\in \R^{\infty\times n}$, we build the graph $G(P)$ whose vertices and edges are defined as in Definition \ref{def:gg}.  The discriminant $\Delta_{\infty, n}$ consists of points $P=(p_1, \ldots, p_n)\in \R^{\infty\times n}$ such that there exists a pair $1\leq i<j\leq n$ with $\|p_i-p_j\|^2=1.$ The chambers are now defined as follows: for a given graph $G$ on $n$ vertices, we set
\be W_{G, \infty}=\left\{P=(p_1, \ldots, p_n)\in \R^{\infty\times n}\backslash \Delta_{\infty, n}\,\big|\, G(P)\simeq G\right\}.\ee
It is easy to see that $W_{G, \infty}$ is the direct limit of the sequence of inclusions in \eqref{eq:inclusionsWg}. In particular, from Theorem \ref{thm:zerohomotopy} we deduce the following.
\begin{theorem}\label{thm:contractible} For every graph $G$, the set $W_{G, \infty}=\varinjlim W_{G, d}$ is contractible.
\end{theorem}
\begin{proof}We first observe that, by Lemma \ref{lemma:identity}, for $d\geq n$ each $W_{G, d}$ is described by a list of quadratic inequalities and therefore it is semialgebraic and it has the homotopy type of a CW--complex. Since $W_{G, \infty}=\varinjlim W_{G, d}$, it follows by \cite[Corollary on pag. 253]{Morsetheory} that also $W_{G, \infty}$ has the homotopy type of a CW--complex. By Whitehead's Theorem (\cite[Theorem 4.5]{Hatcher}), in order to prove that $W_{G, \infty}$ is contractible it is enough to prove that all its homotopy groups are zero.

To this end, let $f:S^k\to W_{G, \infty}$. We need to prove that $f$ is homotopic to a constant map, which implies $\pi_k(W_{G, \infty})=0.$
We will prove that $f$ is homotopic to a map $g:S^k\to W_{G, d}$ for some large enough $d$. Then we can use Theorem \ref{thm:zerohomotopy} to conclude that $g$ is homotopic to a constant map, and so is $f$.

We first observe that, since $S^k$ is compact and $W_{G, \infty}$ is open, there exists $\epsilon>0$ such that
\be\label{eq:cont} \bigcup_{\theta \in S^k}B(f(\theta), \epsilon)\subset W_{G, \infty}.\ee
In fact, for every $\theta\in S^k$ there exists $\epsilon(\theta)>0$ such that $B(f(\theta), \epsilon(\theta))\subset W_{G,\infty}$ (because $W_{G, \infty}$ is open), and the existence of a uniform $\epsilon>0$ follows by compactness of $f(S^k).$

The inclusion \eqref{eq:cont} implies that if $g:S^k\to \R^{\infty\times n}$ is a continuous map with 
\be \label{eq:sup}\sup_{\theta\in S^k}\|f(\theta)-g(\theta)\|\leq \epsilon,\ee then $f_t=(1-t)f+tg$
is a homotopy between $f$ and $g$ with $f_t(S^k)\subset W_{G, \infty}.$ Our scope is to build a map $g:S^k\to W_{G,\infty}$ satisfying \eqref{eq:sup} and such that $g(S^k)\subset W_{G, d}$ with $d\geq k+n+1$. 

Given a point $P=(p_1, \ldots, p_n)\in \R^{\infty\times n}$ let us denote by $P^{\leq d}$ the point given by the same $n$--tuple of sequences as in $P$, but where we have set to zero all the terms of the sequences past the $d$--th one. In other words $P^{\leq d}$ is the inclusion in $\R^{\infty\times n}$ of the projection of $P$ to $\R^{d\times n}$. The definition of $\R^{\infty \times n}$ implies that for every $P$ there exists $d\geq 1$ such that $P=P^{\leq d}$. This means that for every $\theta\in S^k$ there exists $d(\theta)$ such that $f(\theta)=f(\theta)^{\leq d(\theta)}.$ 

We claim now that there exists $d_f\geq 1$ such that for all $\theta\in S^k$ we have 
\be\label{eq:sup2} \|f(\theta)-f(\theta)^{\leq d_f}\|\leq \epsilon.\ee If this was false, then there would be a sequence $\{\theta_n\}_{n\geq 1}\subset S^k$ such that for all $n\geq 1$ the following inequality would be satisfied:
\be \|f(\theta_n)-f(\theta_n)^{\leq n}\|>\epsilon.\ee
Up to subsequences, we may assume that $\theta_n\to \theta$ and $f(\theta_n)\to f(\theta)$. Let $d(\theta)>0$ be such that $f(\theta)=f(\theta)^{\leq d(\theta)}$ and pick $\theta_n$ with $n\geq d(\theta)$ and such that $\|f(\theta)-f(\theta_n)\|\leq  \epsilon/2.$ Then
\begin{align} \epsilon&<\|f(\theta_n)-f(\theta_n)^{\leq n}\|\\
&\leq\left(\|f(\theta_n)-f(\theta_n)^{\leq n}\|^2+\|f(\theta)^{\leq n}-f(\theta_n)^{\leq n}\|^2\right)^{1/2}\\
&\label{eq:bra1}=\|f(\theta)-f(\theta_n)\|\leq \epsilon/2.
\end{align}
The last line follows from the fact that both $f(\theta)$ and $f(\theta_n)$ can be written as $f(\theta)=(f(\theta)^{\leq n})+ (f(\theta)-f(\theta)^{\leq n})$ and $f(\theta_n)=(f(\theta_n)^{\leq n})+ (f(\theta_n)-f(\theta_n)^{\leq n})$ with the two summands belonging in both cases to the same orthogonal subspaces; this tells that
\be\label{eq:bra2} f(\theta)-f(\theta_n)=\left[f(\theta)^{\leq n}-f(\theta_n)^{\leq n}\right]+\left[(f(\theta)-f(\theta)^{\leq n})-(f(\theta_n)-f(\theta_n)^{\leq n}))\right],\ee
with the two summands in the square brackets belonging to orthogonal subspaces. Moreover, since $f(\theta)=f(\theta)^{\leq n}$ by construction, the summand in the second square brackets of \eqref{eq:bra2} simply equals  $f(\theta_n)-f(\theta_n)^{\leq n}$ and this gives \eqref{eq:bra1}.
Equation \eqref{eq:bra1} gives a contradiction, and therefore such a $d_f\geq 1$ exists. Define now $d=\max\{d_f, k+n+1\}$ and set:
\be g(\theta)=f(\theta)^{\leq d}.\ee
The condition \eqref{eq:sup2} gives now $\eqref{eq:sup}$, which is what we wanted.
\end{proof}

\section{Increasing the number of points}\label{sec:n}
\subsection{Geometric graphs on the real line}
We now want to study the number of possible isotopy classes of geometric graphs on the real line when the number of points is large: this is precisely the case $d=1$, $n$ large. If we look at the discriminant $\Delta_{1,n}$, this is an arrangement of hyperplanes, namely
\[ \Delta_{1,n}=\{(x_1,\dots,x_n) \in\R^{1\times n}\mid\: \exists \, i,j \, |x_i-x_j|=1\}.\]

\begin{remark} There is a way to compute explicitly the number $b_0(\R^{1\times n}\setminus \Delta_{1,n})$ using a generalized version of the Mayer-Vietoris spectral sequence for semialgebraic sets. This gives $19$ for $n=3$, $183$ for $n=4$ and $2371$ for $n=5$. The computations becomes tricky for larger $n$, however this numbers are the beginning of a known integer sequence, which is the sequence of \textit{labeled semiorders on} $[n]$.
\end{remark}

The remark leads us to an obvious observation.  
An \textit{interval order for intervals $\{I_i\}_{i=1}^n$ of unit length (=semiorder for $[n]$)} is the partial order corresponding to their left-to-right precedence relation, i.e. one interval $I_i$ being considered less than another $I_j$ iff $I_i$ is completely to the left of $I_j$. In the case $d=1$ the number of components of the complement of $\Delta_{1,n}$ is exactly the number of possible semiorders for $[n]$. This because, once we defined the intervals $[p_i-1,p_i]$  for all $i$, each component of $\R^{1 \times n} \setminus \Delta_{1, n}$ is uniquely determined by whether $p_i<p_j-1$ or $p_i\nless p_j-1$ (see \cite[pag. 73]{stanley2004introduction}).

\begin{remark}
The type of semiorders introduced are usually addressed as semiorders on $n$ labeled items. The number of distinct semiorders on $n$ unlabeled items is given by the Catalan numbers $\{C_n\}_n.$ 
\end{remark}

Let us define $f(n):=$ \textit{number of labeled semiorders of} $[n]$. There is an explicit generating function for this sequence (see \cite[pag.78, Corollary 5.12]{stanley2004introduction}). We have
\[G(x):=\sum_{n\geq 0} f(n)\frac{x^n}{n!}=C(1-e^{-x})\]
where $C$ is the generating function of the known sequence of Catalan numbers $\{C_n\}$. More explicitly we have:
\[\sum_{n\geq 0} C_nx^n=C(x)=\frac{1-\sqrt{1-4x}}{2x}.\]

\begin{theorem}The number of rigid isotopy classes of $\R$--geometric graphs on $n$ vertices equals:
\be b_0\left(\R^{1\times n}\backslash \Delta_{1,n}\right)=\frac{1}{n}\cdot\sqrt{6\, \textrm{\normalfont{log}}\frac{4}{3}}\cdot \left(\frac{n}{e\:\textrm{\normalfont{log}}\frac{4}{3} }\right)^{n}\left(1+O(n^{-\frac{1}{2}})\right).\ee
\end{theorem} 

\begin{proof}
First of all let us notice that the Theorem \ref{theorem:anacomb} can be applied to $G(x)$ since we have only one singularity in $\log\frac{4}{3}$ and we can extend the function to a $\log\frac{4}{3}D$-domain, and actually to the whole $\mathbb{C}\setminus[\log\frac{4}{3},+\infty)$.
The function $C(x)$ has a unique singularity at $x=\frac{1}{4}$. It is easy to see
\[C(x)=2-2\sqrt{1-4x}+O(1-4x).\]
By composition we get
\[G(x)=2-2\sqrt{4e^{-x}-3}+O(4e^{-x}-3)\]
and from this 
\[G(x)=2-2\sqrt{3\textrm{\normalfont{log}}\frac{4}{3}}\cdot\sqrt{1-\frac{x}{\textrm{\normalfont{log}}\frac{4}{3}}}+O\left(1-\frac{x}{\textrm{\normalfont{log}}\frac{4}{3}}\right)=F\left(\frac{x}{\textrm{\normalfont{log}}\frac{4}{3}}\right)+O\left(1-\frac{x}{\textrm{\normalfont{log}}\frac{4}{3}}\right).\]
We can now apply Theorem \ref{theorem:anacomb}. We get
\[\frac{f(n)}{n!}=\left(\textrm{\normalfont{log}}\frac{4}{3}\right)^{-n}\cdot\sigma_n+O\left(\left(\textrm{\normalfont{log}}\frac{4}{3}\right)^{-n}\frac{1}{n^{2}}\right)\]
where $F(x)=\sum_{n=0}^{\infty}\sigma_n z^n$.
Using Remark \ref{remark:anadelt} and $\Gamma\left(-\frac{1}{2}\right)=-2\sqrt{\pi}$ we get
\[\frac{f(n)}{n!}=\left(\textrm{\normalfont{log}}\frac{4}{3}\right)^{-n}\cdot\frac{1}{\sqrt{\pi n^3}}\cdot\sqrt{3\textrm{\normalfont{log}}\frac{4}{3}}+O\left(\left(\textrm{\normalfont{log}}\frac{4}{3}\right)^{-n}\frac{1}{n^{2}}\right)\]
and by Stirling approximation
\[f(n)=\left(\frac{n}{e\:\textrm{\normalfont{log}}\frac{4}{3} }\right)^{n}\cdot\frac{1}{n}\cdot\sqrt{6\textrm{\normalfont{log}}\frac{4}{3}}+O\left(\left(\frac{n}{e\:\textrm{\normalfont{log}}\frac{4}{3} }\right)^{n} n^{-\frac{3}{2}}\right)\]
\end{proof}

With these computations we know asymptotically the number of isotopy classes of geometric graphs on the real line. However, as we discussed before, different isotopy classes can correspond to the same isomorphism class. It is therefore natural to ask for the number $\#_{1,n}$ of isomorphism classes of $\R$--geometric graphs, for $n$ large. In \cite{hanlon1982counting}, Hanlon computes the exponential generating function for this sequence (the author calls the corresponding graphs \emph{labeled unit interval graphs}).

The exponential generating function for $\{\#_{1,n}\}_n$ is
\be \Lambda(x)=\textrm{exp}(\Gamma(x))-1 \ee
where $\Gamma(x)$ is the generating function for the sequence $\{b_n\}_n$ of isomorphism classes of \emph{connected} $\R$--geometric graphs on $n$ vertices. More explicitly, we have \be\Gamma(x)=\frac{1}{4}(1-2z)-\frac{1}{4}\sqrt{\frac{1-3z}{1+z}}\textrm{  where  }z=e^x-1.\ee
Reasoning as before, we prove the following theorem.

\begin{theorem}\label{thm:number}
The number of isomorphism classes of $\R$--geometric graphs on $n$ vertices equals:
\be \#_{1,n}=\frac{e^{\frac{1}{12}}}{8}\cdot\frac{1}{n}\cdot\sqrt{6\, \textrm{\normalfont{log}}\frac{4}{3}}\cdot \left(\frac{n}{e\:\textrm{\normalfont{log}}\frac{4}{3} }\right)^{n}\left(1+O(n^{-\frac{1}{2}})\right).\ee
\end{theorem}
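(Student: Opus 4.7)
The plan is to follow the same singularity-analysis recipe used for the previous theorem, but now applied to the generating function $\Lambda(x) = \exp(\Gamma(x)) - 1$. The key point is that because $z = e^x - 1$ is entire, the singularities of $\Lambda$ are exactly those of $\Gamma$, and the composition with $\exp$ is tame enough that the singular expansion of $\Lambda$ inherits a square-root leading behavior from that of $\Gamma$, only dressed by a simple constant $e^{1/12}$.

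First I would locate the dominant singularity. Writing $z = e^x - 1$, the factor $\sqrt{(1-3z)/(1+z)}$ in $\Gamma$ is singular precisely where $1 - 3z = 0$ (i.e.\ $z = 1/3$), which corresponds to $x = \log(4/3)$; the other preimages $\log(4/3) + 2\pi i k$ lie strictly further from the origin, so $\zeta := \log(4/3)$ is the unique dominant singularity, and $\Lambda$ extends analytically to a domain of the form $\zeta \cdot D_0$ for a $D$-domain $D_0$ (in fact to $\mathbb{C}\setminus[\zeta,+\infty)$), exactly as in the previous theorem.

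Next I would expand $\Gamma$ near $x = \zeta$. Setting $u = x - \zeta$, I would use $e^x = \tfrac{4}{3}e^u = \tfrac{4}{3}(1 + u + O(u^2))$ to obtain $1 - 3z = 4 - 3e^x = -4u + O(u^2)$, while $1 + z \to 4/3$. This yields
\be
\sqrt{\frac{1-3z}{1+z}} = \sqrt{3\log\tfrac{4}{3}}\,\sqrt{1 - x/\zeta} + O\!\left(1 - x/\zeta\right)
\ee
in $\zeta \cdot D_0$. Plugging in and evaluating $\Gamma(\zeta) = \tfrac{1}{4}(1 - 2/3) = 1/12$, I get
\be
\Gamma(x) = \frac{1}{12} - \frac{1}{4}\sqrt{3\log\tfrac{4}{3}}\,\sqrt{1 - x/\zeta} + O\!\left(1 - x/\zeta\right).
\ee
Then the key observation is that $\exp$ is analytic at $1/12$, so Taylor-expanding gives
\be
\Lambda(x) = e^{\Gamma(x)} - 1 = \bigl(e^{1/12} - 1\bigr) - \frac{e^{1/12}}{4}\sqrt{3\log\tfrac{4}{3}}\,\sqrt{1 - x/\zeta} + O\!\left(1 - x/\zeta\right),
\ee
where the $O$-term absorbs the square of the small piece and the remainder from the composition. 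This puts $\Lambda$ in the form required by Theorem \ref{theorem:anacomb}, with $\sigma$ a finite combination of elements of $S$ (namely a constant plus a $(1 - x/\zeta)^{1/2}$ term) and $\tau(y) = (1-y)^{-(-1)} = 1-y$, so $\tau_n^* = n^{-2}$.

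Finally I would extract the coefficients. Using Remark \ref{remark:anadelt} with $\alpha = -1/2$ and $\Gamma(-1/2) = -2\sqrt{\pi}$, the $\sqrt{1-x/\zeta}$ term contributes
\be
[x^n]\left(-\tfrac{e^{1/12}}{4}\sqrt{3\log\tfrac{4}{3}}\,\sqrt{1-x/\zeta}\right) \sim \zeta^{-n}\cdot\frac{e^{1/12}}{8}\sqrt{\frac{3\log\tfrac{4}{3}}{\pi}}\cdot n^{-3/2},
\ee
while the $O(1 - x/\zeta)$ remainder contributes $O(\zeta^{-n} n^{-2})$. Since $\Lambda$ is the exponential generating function of $\#_{1,n}$, multiplying by $n!$ and applying Stirling in the form $n! \sim \sqrt{2\pi n}(n/e)^n$ collapses $\sqrt{2\pi}\cdot\sqrt{3\log\tfrac{4}{3}/\pi} = \sqrt{6\log\tfrac{4}{3}}$ and produces the claimed asymptotic, with the relative error $n^{-2}/n^{-3/2} = O(n^{-1/2})$ coming from the $\tau_n^*$ bound.

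I do not expect serious obstacles; the main thing to be careful about is ensuring that the error term $O(1 - x/\zeta)$ in the singular expansion of $\Lambda$ really has this form in a full $\zeta\cdot D_0$ domain (not just on the real axis approaching $\zeta$), so that Theorem \ref{theorem:anacomb} applies. This follows because $\exp$ is entire and the expansion of $\Gamma$ is a genuine asymptotic expansion in the $D$-domain, controlled by the explicit formula for $1-3z = 4 - 3e^x$ and the single zero of this function inside a small disk around $\zeta$.
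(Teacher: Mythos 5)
Your proposal is correct and follows essentially the same route as the paper: locate the unique dominant singularity at $\zeta=\log\frac{4}{3}$, obtain the singular expansion $\Lambda(x)=\bigl(e^{1/12}-1\bigr)-\frac{e^{1/12}}{4}\sqrt{3\log\frac{4}{3}}\,\sqrt{1-x/\zeta}+O(1-x/\zeta)$, apply Theorem \ref{theorem:anacomb} together with Remark \ref{remark:anadelt}, and finish with Stirling. The only cosmetic difference is that the paper factors $e^{\Gamma(x)}$ into two exponential factors and expands each, whereas you expand $\Gamma$ first and compose with $\exp$; these are algebraically equivalent.
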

\begin{proof}
First of all let us notice that the Theorem \ref{theorem:anacomb} can be applied to $\Lambda(x)$ since we have only one singularity at $\log\frac{4}{3}$ and we can extend the function to a $\log\frac{4}{3}D$-domain, actually to $\mathbb{C}\setminus [\log\frac{4}{3},+\infty)$.
We start with
\be 1+\Lambda(x)=\textrm{exp}\left(\frac{1} {4}\cdot(3-2e^x)\right)\cdot\textrm{exp}\left(-\frac{1}{4}\sqrt{4e^{-x}-3}\right)\ee
Then, 
\[1+\Lambda(x)=\left(e^{\frac{1}{12}}+O\left(1-\frac{x}{\log\frac{4}{3}}\right)\right)\cdot\left(-\frac{1}{4}\sqrt{4e^{-x}-3}+1+O(4e^{-x}-3)\right)\]
\[1+\Lambda(x)=e^{\frac{1}{12}}-\frac{e^{\frac{1}{12}}}{4}\sqrt{3\log\frac{4}{3}}\sqrt{1-\frac{x}{\log\frac{4}{3}}}+O\left(1-\frac{x}{\log\frac{4}{3}}\right).\]
Finally,
\[\frac{\#_{1,n}}{n!}=\frac{e^{\frac{1}{12}}}{8}\cdot\left(\textrm{\normalfont{log}}\frac{4}{3}\right)^{-n}\cdot\frac{1}{\sqrt{\pi n^3}}\cdot\sqrt{3\textrm{\normalfont{log}}\frac{4}{3}}+O\left(\left(\textrm{\normalfont{log}}\frac{4}{3}\right)^{-n}\frac{1}{n^{2}}\right)\]
\[\#_{1,n}=\frac{e^{\frac{1}{12}}}{8}\cdot\left(\frac{n}{e\:\textrm{\normalfont{log}}\frac{4}{3} }\right)^{n}\cdot\frac{1}{n}\cdot\sqrt{6\textrm{\normalfont{log}}\frac{4}{3}}+O\left(\left(\frac{n}{e\:\textrm{\normalfont{log}}\frac{4}{3} }\right)^{n} n^{-\frac{3}{2}}\right).\]
\end{proof}
Even though in the general case we still do not have a clear understanding of the relation between $b_0\left(\R^{1\times n}\setminus \Delta_{1,n}\right)$ and ${\#_{1,n}}$, in the case $d=1$ we have the following corollary.
\begin{corollary}\label{coro:ratio}
We have 
\be b_0(\R^{1\times n}\backslash \Delta_{1,n})=\frac{8}{e^{\frac{1}{12}}}\cdot\#_{1, n}\left(1+O(n^{-\frac{1}{2}})\right)\ee
where $8/\sqrt[12]{e} = 7.3603...$.
\end{corollary}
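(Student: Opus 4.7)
The plan is to derive the corollary directly by taking the ratio of the two asymptotic expressions already established, namely the formula for $b_0(\R^{1\times n}\backslash \Delta_{1,n})$ proved earlier in this subsection and Theorem \ref{thm:number}. Since both asymptotics share the same dominant factor $\frac{1}{n}\sqrt{6\log\tfrac{4}{3}}\left(\frac{n}{e\log\tfrac{4}{3}}\right)^n$, dividing one by the other will cause that common factor to cancel, leaving only the constant ratio together with a combined error term.

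More concretely, I would proceed as follows. First, record the two asymptotic expansions
\begin{align}
b_0(\R^{1\times n}\backslash \Delta_{1,n}) &= A_n\left(1+O(n^{-1/2})\right), \\
\#_{1,n} &= \frac{e^{1/12}}{8}\,A_n\left(1+O(n^{-1/2})\right),
\end{align}
where $A_n := \frac{1}{n}\sqrt{6\log\tfrac{4}{3}}\left(\frac{n}{e\log\tfrac{4}{3}}\right)^n$. Next, observe that for $n$ sufficiently large the quantity $\#_{1,n}$ is strictly positive (and in fact the factor $1+O(n^{-1/2})$ in its expansion is bounded away from zero), so the ratio $b_0(\R^{1\times n}\backslash \Delta_{1,n})/\#_{1,n}$ is well-defined and equal to
\[
\frac{A_n(1+O(n^{-1/2}))}{\tfrac{e^{1/12}}{8}A_n(1+O(n^{-1/2}))} = \frac{8}{e^{1/12}}\cdot\frac{1+O(n^{-1/2})}{1+O(n^{-1/2})}.
\]
Finally, apply the standard expansion $(1+O(n^{-1/2}))^{-1} = 1+O(n^{-1/2})$, valid once the implicit constant in the $O$ term is smaller than $1/2$, to conclude that the ratio of error factors is itself of the form $1+O(n^{-1/2})$. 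Multiplying through by $\#_{1,n}$ yields the stated identity.

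There is essentially no obstacle here: the only point that requires any care is bookkeeping of the error terms, which amounts to noting that a Taylor expansion of $1/(1+x)$ converges for $|x|<1$ and that the $O(n^{-1/2})$ factor is eventually in this range. A numerical sanity check using $\log(4/3) \approx 0.2877$ and $e^{1/12}\approx 1.0869$ gives $8/e^{1/12}\approx 7.3603$, matching the statement.
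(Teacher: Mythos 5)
Your proposal is correct and matches the paper's (implicit) argument: the corollary is stated as an immediate consequence of the two asymptotic formulas, obtained exactly by taking their ratio and absorbing the error factors via $(1+O(n^{-1/2}))^{-1}=1+O(n^{-1/2})$. Nothing further is needed.
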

The number $8/\sqrt[12]{e}$ can be roughly interpreted as the average number of rigid isotopy classes realizing a particular $\R$-geometric graph isomorphism type.

\subsection{Asymptotic enumeration in higher dimensions}
While the situation for isotopy classes of geometric graphs on the real line is given by the number of semiorders on $[n]$, such a closed form description apparently does not exist for larger values of $d$. Nonetheless we are able to obtain reasonable bounds on the asymptotics following methods of McDiarmid and M\"{u}ller from \cite{McDiarmidMuller} who study asymptotic enumeration of labeled disk graphs in $\mathbb{R}^2$. A disk graph in $\mathbb{R}^2$ is a graph given by an arrangement of open disks in $\mathbb{R}^2$ where the vertices are the disks and there is an edge between a pair of them if and only if the corresponding disks intersect one another. In the case that all the disks have the same radius this is exactly the setting of our geometric graphs in the case that $d = 2$. McDiarmid and M\"{u}ller show that the number of labeled graphs on $n$ vertices which are unit disk graphs in $\mathbb{R}^2$, in our notation $\#_{2, n}$, is order $\exp(2n \log(n) + \Theta(n))$, and adapting their method we prove the following theorem. In particular we prove that the right asymptotic rate of growth both for $\#_{d, n}$ and for $b_0\left(\R^{d \times n}\backslash \Delta_{d,n}\right)$ is $\exp(dn \log(n) + \Theta(n))$.
We will prove the following theorem, which is a consequence of Theorem \ref{lemma:upperbound} and Theorem \ref{theo:lowbound}.
\begin{theorem}\label{thm:lower}
For $d \geq 2$ fixed and $n\geq 4d+1$ one has the following bounds:
\be \left(\frac{1}{(d+1)e^2}\right)^{dn}n^{dn}\leq \#_{d, n}\leq b_0\left(\R^{d \times n}\backslash \Delta_{d,n}\right)\leq 2dn \left(\frac{3e}{2d} \right)^{dn} n^{dn}
\ee
\end{theorem}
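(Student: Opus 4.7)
The two inequalities in the theorem are proved by quite different techniques, and my plan is to attack them separately.

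For the upper bound, the strategy is the one advertised in Section~\ref{sec:aldu}: transfer the counting of chambers of $\R^{d\times n}\setminus \Delta_{d,n}$ to a topological question about the discriminant via Alexander--Pontryagin duality. Since $S^{dn}\setminus \hat{\Delta}_{d,n}=\R^{d\times n}\setminus \Delta_{d,n}$, duality gives
\be
b_0\bigl(\R^{d\times n}\setminus \Delta_{d,n}\bigr)= 1+\tilde{b}^{\,dn-1}\bigl(\hat{\Delta}_{d,n};\mathbb{Z}_2\bigr),
\ee
so it suffices to bound the top reduced cohomology of $\hat{\Delta}_{d,n}$. Now $\hat{\Delta}_{d,n}$ is a real algebraic subset of $S^{dn}$, and its complement is described by sign conditions of the family $\{q_{ij}\}_{1\leq i<j\leq n}$ of $\binom{n}{2}$ quadrics, as in Section~3.1. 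I would therefore apply a sharp form of the Warren / Basu--Pollack--Roy estimate on the number of realizable sign conditions of $s$ polynomials of degree $D$ in $N$ variables, specialized to $s=\binom{n}{2}$, $D=2$, $N=dn$. Plugging these values into a bound of the form $(N+1)\bigl(O(sD/N)\bigr)^{N}$ and bookkeeping the constants (using $n\geq 4d+1$ to absorb lower order terms) should produce the claimed $2dn(3e/(2d))^{dn} n^{dn}$; alternatively, since $\hat\Delta_{d,n}$ is a union of $\binom{n}{2}$ quadrics, one can reach essentially the same bound by feeding the Mayer--Vietoris spectral sequence with Milnor-type estimates for the intersections $\hat{\Delta}_{d,n}^{G}$ defined in \eqref{eq:DG}.

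For the lower bound my plan follows the McDiarmid--M\"uller strategy for $d=2$, generalized to arbitrary $d\geq 2$. The goal is to exhibit at least $\bigl((d+1)e^2\bigr)^{-dn} n^{dn}$ pairwise distinct labelled $\R^d$--geometric graphs. I would construct these graphs by placing points on a rescaled integer lattice inside a cube $[0,L]^d$ with $L\asymp n^{1/d}/(d+1)^{1/d}$ chosen so that the lattice has roughly $\tfrac{n}{d+1}$ available cells per axis, hence on the order of $(n/((d+1)e))^{d}$ available positions. Then I would count labelled choices of $n$ distinct positions: the number of such ordered choices is essentially $\bigl(n^d/((d+1)e)^{d}\bigr)^{n}=n^{dn}/((d+1)e)^{dn}$, and a careful perturbation argument (replacing points within $\varepsilon$-balls so that all pairwise distances are bounded away from $1$) guarantees that different choices produce non-isomorphic labelled geometric graphs. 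After accounting for the multiplicity with which each isomorphism class can be realized — the analogue of the key combinatorial lemma of \cite{McDiarmidMuller} in dimension $d$ — one loses a further factor of at most $e^{dn}$, producing the claimed constant $((d+1)e^2)^{-dn}$.

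The main obstacle I expect is on the lower bound side: one must prove that the perturbed lattice configurations really yield \emph{distinct labelled graphs}, and not merely distinct unordered point sets. This is precisely the step that in McDiarmid--M\"uller requires a delicate probabilistic / combinatorial argument, and its extension to $d\geq 3$ is the heart of the matter — one has to arrange the geometric positions so that the induced unit-distance graph is rigid enough to recover the labels up to a small multiplicity. On the upper bound side the technical issue is getting the correct constant $3e/(2d)$ rather than a crude $O(n)$: a naive application of Milnor's bound to the single polynomial $\prod_{i<j}q_{ij}$ of degree $n(n-1)$ yields $n^{2dn}$ and is too weak by a square, so one is forced to use a bound that exploits the fact that the discriminant is a union of degree--$2$ hypersurfaces and not a single high-degree hypersurface.
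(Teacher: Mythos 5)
Your upper bound plan is essentially the paper's. One caution: the plain Warren estimate counts realizable \emph{sign vectors}, which bounds $\#_{d,n}$ but not $b_0(\R^{d\times n}\setminus\Delta_{d,n})$, since a single sign condition can split into many chambers; you would need the Basu--Pollack--Roy bound on the number of \emph{connected components} of realizable sign conditions, not the sign-vector count. Your fallback — Alexander duality, then the Mayer--Vietoris spectral sequence for the covering of $\hat\Delta_{d,n}$ by the quadrics $\hat\Delta_{d,n}^{i,j}$, with Milnor's degree-$2$ bound $2\cdot 3^{dn-1}$ applied to each intersection $\hat\Delta_{d,n}^G$ — is exactly what the paper does: only graphs with at most $dn$ edges contribute to $E_1^{i,dn-1-i}$, and the binomial sum $\sum_{k\le dn}\binom{\binom{n}{2}}{k}$ is dominated by its top term using $n\ge 4d+1$, which is where the constant $3e/(2d)$ comes from.

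The lower bound is where you have a genuine gap. Your lattice construction does not produce distinct labelled graphs: two different assignments of labels to lattice cells very often induce the same labelled graph (in the extreme, if all chosen cells are pairwise at distance greater than $1$ every placement gives the empty graph), and there is no reason the multiplicity of a given isomorphism class among your $\bigl(n^d/((d+1)e)^d\bigr)^n$ placements is bounded by $e^{dn}$ — it can be super-exponential. The step you defer ("recover the labels up to a small multiplicity") is not a technicality; it is the entire content of the bound, and the lattice setup gives you no handle on it. The paper's mechanism (generalizing McDiarmid--M\"uller from $d=2$) is different: all $n$ points are clustered in balls $B(P_j,\epsilon_i)$ of rapidly shrinking radii around the $d+1$ vertices of a \emph{unit} regular $d$-simplex. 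A new vertex is added by choosing a transversal $\sigma$ of $d$ already-placed points, one from each of $d$ clusters, and placing the new point near the unique intersection point $q_\sigma$ of the $d$ unit spheres centered at the points of $\sigma$ (this requires a Lipschitz-continuity lemma for $q_\sigma$, the $d$-dimensional analogue of Lemma 4.1 of McDiarmid--M\"uller, plus a separation lemma for the regions $O_\sigma$). The point is that the \emph{neighborhood} of the new vertex then determines $\sigma$, so distinct transversals give non-isomorphic labelled extensions; this yields the recursion $u_{k+1,d}\ge\lfloor k/(d+1)\rfloor^d\,u_{k,d}$, whose telescoping product gives $((d+1)e^2)^{-dn}n^{dn}$. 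Without some such label-recovery device your count of placements does not translate into a count of labelled graphs.
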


\subsection{General case: the upper bound} \label{sec:upperboundb0}
While McDiarmid and M\"{u}ller are primarily interested in enumerating labeled geometric graphs in the plane, in our notation $\#_{2, n}$, their upper bound holds for general $d$, as they point out in \cite{McDiarmidMuller}. The key lemma in their proof of their upper bound is the following result of Warren \cite{Warren}. 
\begin{theorem}[\cite{Warren}]
If $P_1, ..., P_m$ are polynomials of degree at most $t$ in real variables $z_1, ..., z_k$ then the number of distinct sign patterns 
$(\mathrm{sign}(P_1(\overline{z}), ..., \mathrm{sign}(P_m(\overline{z})) \in \{-1, 1\}^m$
that occur in $\R^k \setminus \cup_{i = 1}^m \{\overline{z} : P_i(\overline{z}) = 0\}$ is at most
\[\left(\frac{4etm}{k}\right)^k.\]
\end{theorem}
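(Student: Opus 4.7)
The plan is to first reduce the count of sign patterns to a count of connected components of the complement of the arrangement $\bigcup_{i=1}^m V(P_i)$ in $\R^k$, and then apply a Morse-theoretic critical point count, refined by $\binom{m}{k}$-binomial bookkeeping so that the factor $em/k$ rather than $m$ appears in the final bound.

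First, distinct sign patterns correspond to disjoint nonempty open subsets of $\R^k \setminus \bigcup_{i=1}^m V(P_i)$, so the number of sign patterns is at most the number of connected components $N$ of this complement. I would next replace the $P_i$ by a generic small perturbation, which cannot decrease $N$ and which places the hypersurfaces $V(P_i)$ in general position: any $s \leq k$ of them intersect transversally in a smooth $(k-s)$-dimensional variety, and any $k+1$ of them have empty intersection.

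The key estimate comes from choosing a generic Morse function $f(z) = \|z - z_0\|^2$. Each bounded connected component $C$ of the complement contains a critical point of $f$ on the closure $\overline{C}$: either the interior critical point $z_0$ (contributing at most one component), or a constrained minimum on the transverse intersection $\bigcap_{j \in I} V(P_{i_j})$ for some $I \subseteq [m]$ with $|I| = s \leq k$, satisfying a Lagrange-multiplier system. Unbounded components are controlled by intersecting with a large ball, which costs only a bounded factor. For each fixed $s$-subset $I$, the Lagrange system consists of $s$ equations $P_{i_j} = 0$ of degree $t$ together with vanishing conditions on a minor of the Jacobian, so Bezout's theorem yields at most $t^s (2t-1)^{k-s}$ real critical points. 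Summing over subsets and accounting for a $2^{k-s}$ factor from sign-pattern ambiguity on the unconstrained directions gives
\[
N \;\leq\; \sum_{s=0}^{k} \binom{m}{s}\, t^{s} (2t-1)^{k-s}\, 2^{k-s}.
\]
For $m \geq 2k$ the sum is dominated (up to a constant factor absorbable into the $4$) by its top term $\binom{m}{k}(4t)^k$. Finally, using $\binom{m}{k} \leq \left(\tfrac{em}{k}\right)^k$ produces the claimed bound $\left(\tfrac{4emt}{k}\right)^k$, and the small-$m$ regime $m < 2k$ is checked directly.

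The main obstacle is the bookkeeping of constants: the Morse argument naively produces a bound of order $(2mt)^k$ via Milnor--Thom applied to the single polynomial $\prod_i P_i$, which is much weaker than Warren's $(4emt/k)^k$ when $k \ll mt$. Extracting the sharper factor $em/k$ requires organizing the critical point count stratum-by-stratum indexed by subsets $I \subseteq [m]$, applying Bezout on each stratum separately, and then collapsing the binomial sum $\sum_s \binom{m}{s}(\cdots)$ into a clean closed form. A secondary difficulty is making the generic perturbation rigorous, verifying that neither $N$ nor the sign pattern count decreases under a sufficiently small perturbation of the $P_i$, and handling the unbounded components without picking up extra dimension-dependent factors.
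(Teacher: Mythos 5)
A preliminary remark: the paper does not prove this statement at all — it is Warren's theorem, quoted with a citation to \cite{Warren} and used as a black box to bound $\#_{d,n}$. So there is no proof in the paper to compare against, and your plan has to be judged on its own terms. Its architecture is the classical Warren--Milnor one (put the arrangement in general position, count cells via critical points of a distance function on the strata, apply a Bezout-type bound stratum by stratum, then use $\sum_{s\le k}\binom{m}{s}\le (em/k)^k$), and that route does work; but several of the concrete steps, as you state them, are wrong or unjustified.

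(i) The claim that a small generic perturbation ``cannot decrease $N$'', the number of components of the complement, is false: for $k=1$ and $P(z)=z^2$ the complement has two components, while for $z^2+\epsilon$ it has one. What is preserved is the number of realized \emph{sign patterns} (fix one representative point per pattern and perturb by less than the minimum of the $|P_i|$ at those points); so you must perturb first and only then pass from patterns to components, not bound patterns by components of the original arrangement and then perturb. (ii) The multiplicity factor is misplaced: at a point where exactly $s$ of the hypersurfaces meet transversally, the complement has at most $2^{s}$ local components (one per orthant in the \emph{constrained} directions), so the factor should be $2^{s}$, not $2^{k-s}$. (iii) The per-stratum count $t^{s}(2t-1)^{k-s}$ does not follow from the system you describe: the $(s+1)\times(s+1)$ minors of the matrix with rows $\nabla f,\nabla P_{i_1},\dots,\nabla P_{i_s}$ have degree $1+s(t-1)$, not $2t-1$, so the quoted Bezout product is unsubstantiated, and with the naive correction the middle terms of your sum are no longer visibly dominated by the top term. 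A count that does work is the multihomogeneous Bezout bound in the variables $(z,\lambda)$ of the Lagrange system, which gives at most $\binom{k}{s}\,t^{s}(t-1)^{k-s}$ critical points per $s$-subset; combined with the $2^{s}$ factor and $\sum_{s\le k}\binom{m}{s}\le(em/k)^{k}$ this does collapse to $(4etm/k)^{k}$. (iv) Finally, the inequality needs Warren's hypothesis $m\ge k$ (satisfied in the paper's application, where $m=\binom{n}{2}$ and $k=dn$); for $m\ll k$ the stated bound is simply false, so ``the small-$m$ regime is checked directly'' cannot be carried out unconditionally. In short: right strategy, but the perturbation step, the $2^{k-s}$ bookkeeping, and the stratum-wise Bezout bound all need repair before the closed form is actually established.
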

Now given $n, d$ we take the $\binom{n}{2}$ polynomials in variables $(x_1, ..., x_n) \in \R^{dn}$ given by $q_{i, j}(x) = \|x_i-x_j\|^2 - 1$, defined in \eqref{eq:qp}. Then each sign pattern of these $\binom{n}{2}$ degree 2 polynomials in $dn$ variable corresponds to a unique isomorphism class of labeled geometric graphs on $n$ vertices in $\R^d$. Therefore we have the following bound:
\be
\#_{d,n}\leq\left(\frac{4e}{d}\right)^{nd} n^{nd}.\ee
However, a single sign pattern could be a disjoint union of several rigid isotopy classes, so we need a different argument to bound $b_0(\R^{d \times n} \setminus \Delta_{d,n})$. We prove the following theorem.

\begin{theorem}[Upper bound]\label{lemma:upperbound}For fixed $d$ and for $n\geq 4d+1$, we have the bound:
\[b_0(\R^{d \times n} \setminus \Delta_{d, n}) \leq 2dn \left(\frac{3e}{2d}\right)^{dn} n^{dn}\]
\end{theorem}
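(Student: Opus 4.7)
The plan is to translate the problem into one about the top cohomology of the one-point compactification $\hat{\Delta}_{d,n}$, then bound that cohomology via a Mayer--Vietoris spectral sequence built from the natural covering of the discriminant by quadric intersections, and finally control each entry with Milnor's bound on the Betti numbers of real algebraic sets.

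First I would invoke Alexander--Pontryagin duality (Section \ref{sec:aldu}) with $\mathbb{Z}_2$ coefficients inside $S^{dn}$ to get $b_0(\R^{d\times n}\setminus\Delta_{d,n}) = \tilde b_{dn-1}(\hat\Delta_{d,n}) + 1$. So the task reduces to bounding $\tilde b_{dn-1}(\hat\Delta_{d,n})$. For this I consider the closed cover $\hat\Delta_{d,n} = \bigcup_{(i,j)} \hat\Delta^{(i,j)}_{d,n}$, whose non-empty $(p+1)$-fold intersections are exactly the one-point compactifications $\hat\Delta^{G}_{d,n}$ indexed by the labeled graphs $G$ on $[n]$ with exactly $p+1$ edges (using the notation of \eqref{eq:DG}). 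The corresponding Mayer--Vietoris spectral sequence then has $E_1$-page
\begin{equation}
E_1^{p,q} \;=\; \bigoplus_{\substack{G \text{ graph on } [n]\\ |E(G)|=p+1}} H^{q}(\hat\Delta^{G}_{d,n}),
\end{equation}
converging to $H^{*}(\hat\Delta_{d,n})$, which gives the key inequality
\begin{equation}
\tilde b_{dn-1}(\hat\Delta_{d,n}) \;\leq\; \sum_{p+q=dn-1} \dim_{\mathbb{Z}_2} E_1^{p,q}.
\end{equation}

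Next I would bound each $\dim H^{q}(\hat\Delta^G_{d,n})$ uniformly in $G$ and $q$. Before compactifying, $\Delta^G_{d,n}\subset \R^{d\times n}$ is a real algebraic variety cut out by quadratic polynomials, so Milnor's theorem (Theorem~2 of \cite{Milnor}) applied with $k=2$ and $m=dn$ yields $\sum_q b_q(\Delta^G_{d,n}) \leq 2\cdot 3^{dn-1}$. Adding the point at infinity changes the total Betti number by at most a small additive constant, so the crude bound $\sum_q b_q(\hat\Delta^G_{d,n}) \leq 2\cdot 3^{dn}$ is comfortable. Plugging in,
\begin{equation}
\tilde b_{dn-1}(\hat\Delta_{d,n}) \;\leq\; 2\cdot 3^{dn} \sum_{k=1}^{dn} \binom{\binom{n}{2}}{k},
\end{equation}
where $k=p+1$ is the number of edges of $G$ (terms with $k>dn$ contribute $0$ to $E_1^{p,q}$ in the relevant total degree since $\hat\Delta^G_{d,n}$ has dimension at most $dn$ so its cohomology vanishes above $dn$, letting us truncate the sum at $k=dn$).

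Finally, for $n\geq 4d+1$ the sequence $\binom{\binom{n}{2}}{k}$ is increasing on $\{1,\dots,dn\}$, so
\begin{equation}
\sum_{k=1}^{dn} \binom{\binom{n}{2}}{k} \;\leq\; dn\,\binom{\binom{n}{2}}{dn} \;\leq\; dn\left(\frac{e\binom{n}{2}}{dn}\right)^{dn} \;\leq\; dn\left(\frac{en}{2d}\right)^{dn}.
\end{equation}
Combining with the factor $2\cdot 3^{dn}$ gives $\tilde b_{dn-1}(\hat\Delta_{d,n}) \leq 2dn\bigl(\tfrac{3e}{2d}\bigr)^{dn} n^{dn}$, and the same upper bound absorbs the $+1$ from Alexander duality, completing the proof. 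The main technical point to be careful about is the verification that the Mayer--Vietoris spectral sequence is applicable to this semi-algebraic (closed, locally contractible) covering of $\hat\Delta_{d,n}$ and that the passage to one-point compactification does not destroy the Milnor-type bound; both are standard but worth stating explicitly, and in particular the monotonicity bound on binomial coefficients requires the hypothesis $n\geq 4d+1$ to ensure $dn \leq \tfrac{1}{2}\binom{n}{2}$.
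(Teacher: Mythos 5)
Your argument follows the paper's proof step for step: Alexander--Pontryagin duality to reduce to $\tilde b_{dn-1}(\hat\Delta_{d,n})$, the Mayer--Vietoris spectral sequence for the covering by the $\hat\Delta^{(i,j)}_{d,n}$ with $E_1$-page indexed by labeled graphs, a Milnor-type bound on each summand, and the binomial estimate in which $n\geq 4d+1$ enters exactly where you say it does (to guarantee $dn\leq\tfrac12\binom n2$ so that the binomial coefficients are increasing up to $k=dn$). All of that, including the final arithmetic, matches the paper.

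The one step that does not hold up as written is the passage from Milnor's bound on $\Delta^G_{d,n}$ to a bound on $\hat\Delta^G_{d,n}$. The assertion that adding the point at infinity changes the total Betti number by ``at most a small additive constant'' is false in general, even for real algebraic sets cut out by low-degree equations: the union of $m$ lines through the origin in $\R^2$ is contractible, but its one-point compactification is a graph on the two vertices $\{0,\infty\}$ with $2m$ edges, whose total $\mathbb{Z}_2$-Betti number is $2m$; already for the single quadric $\{xy=0\}$ the total Betti number jumps from $1$ to $4$. So some argument specific to the sets $\Delta^G_{d,n}$ is required, and the slack between $2\cdot 3^{dn-1}$ and $2\cdot 3^{dn}$ does not substitute for one. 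The paper supplies such an argument in Remark \ref{rem:comeq}: via inverse stereographic projection, $\hat\Delta^G_{d,n}$ is itself homeomorphic to a real algebraic subset of $\R^{dn+1}$ defined by the degree-$2$ equations $\|x_i-x_j\|^2=(1-z)^2$ together with the equation of the unit sphere, so Milnor's theorem applies directly to the compactification and gives the $2\cdot 3^{dn}$ bound with no intermediate comparison. Alternatively, Lemma \ref{lemma:graphtopology} exhibits $\Delta^G_{d,n}$ as $K(G)\times\R^{d\beta_0}$, a trivial vector bundle over a compact base, so $\hat\Delta^G_{d,n}$ is its Thom space and the $\mathbb{Z}_2$ Thom isomorphism gives $\tilde b(\hat\Delta^G_{d,n})=b(K(G))$, again controlled by Milnor. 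Either repair is short, but one of them is needed; as stated, your compactification step is a genuine gap.
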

\begin{remark}\label{rem:comeq} Let us denote with  $\hat{\Delta}_{d,n}^G$ the one point compactification of the set $\Delta_{d,n}^G$ defined in \eqref{eq:DG} where $G$ is any graph on $[n]$. This is an algebraic set $X$ of $\R^{nd+1}=(x_1,\dots,x_n,z)$ defined by $k+1$ equations which are
\[\norm{x_i-x_j}^2=(1-z)^2\]
with $(i,j)$ edge of $G$ and the equation of the sphere $\norm{x_1}^2+\dots+\norm{x_n}^2+z^2=1$. In fact, if we look at the explicit expression of the stereographic projection we get an homeomorphism  between $\Delta_{d,n}^G$ and $X\setminus(\{(0,1)\})$ and from this the claim.
\end{remark}
\begin{proof}
By Alexander duality (see Section \ref{sec:aldu}) $b_0(\R^{d \times n} \setminus \Delta_{d, n}) = b_{dn - 1}(\hat{\Delta}_{d, n}) + 1$, where $\hat{\Delta}_{d,n}$ is the one-point compactification of the discriminant. Therefore, it is sufficient to bound $b_{dn - 1}(\hat{\Delta}_{d, n})$.
Let us consider the Mayer--Vietoris spectral sequence for simplicial complexes (see \cite[Section 3.2] {basu2003different} for a complete construction). Thanks to the previous remark $\hat{\Delta}_{d, n}$ is an algebraic set and we can use the mentioned spectral sequence with respect to the algebraic covering $\{\hat{\Delta}^G_{d, n}\}_{G}$ where $G$ varies over nonempty labeled graphs on $[n]$.
The $E_1$ page of the spectral sequence has 
\be\label{eqn:E1}
E_{1}^{i,j} = \bigoplus_{G \text{ a graph on $[n]$ with exactly $i + 1$ edges}} H^j(\hat{\Delta}^{G}_{d, n}),
\ee and 
\be\label{eqn:DN1} b_{dn - 1}(\hat{\Delta}_{d, n}) \leq \sum_{i = 0}^{dn - 1} \textrm{dim}_{\mathbb{Z}_2} (E_1^{i,(dn - 1 - i)}).\ee

Using Theorem 2 of \cite{milnor1964betti} and the fact that for any labeled graph $G$ on $[n]$ the topological space $\hat{\Delta}_{d,n}^G$ is an algebraic set defined by equations of degree $2$ in $\R^{dn}$, we get that its total Betti number is at most $2(3)^{dn - 1}$. Using this, we have:
\begin{eqnarray*}
b_{dn - 1}(\hat{\Delta}_{d,n}) &\leq& \sum_{k = 1}^{dn} \binom{\binom{n}{2}}{k} 2(3)^{dn} \\
&\leq& 2(3)^{dn} \sum_{k = 1}^{dn} \binom{\binom{n}{2}}{k} \\
&\leq& 2(3)^{dn} dn \binom{\binom{n}{2}}{dn} \\
&\leq& 2(3)^{dn} dn \left(\frac{n^2 e}{2dn}\right)^{dn},
\end{eqnarray*}
where in the third inequality we used $n\geq 4d+1$.
\end{proof}

\subsection{General case: the lower bound}
For the lower bound on the number of labeled disk graphs, McDiarmid and M\"{u}ller give a procedure for inductively generating many distinct labeled disk graphs. Here we generalize this procedure to higher dimensions.

For each $k \geq d + 1$ we construct a family of non-isomorphic labeled geometric graphs on $k$ vertices in $\R^d$, $U_{k, d}$. If we let $u_{k, d}$ denote the number of graphs in $U_{k, d}$, we show that for $k \geq d + 1$, 
\[u_{k+1, d} \geq \left(\left\lfloor \frac{k}{d + 1} \right\rfloor\right)^d u_{k, d}. \]
This recursion implies the following which we prove in Section \ref{sec:proof}.
\begin{theorem}[Lower bound]\label{theo:lowbound} We have for $n> d+1$ that \[\left(\frac{n}{(d+1)e^2}\right)^{dn}\leq\#_{d, n}\]
\end{theorem}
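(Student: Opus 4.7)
The plan is to iterate the recursive inequality $u_{k+1,d} \geq \lfloor k/(d+1)\rfloor^d \, u_{k,d}$ from the base case $u_{d+1,d} = 1$ (the simplex configuration), and then massage the resulting product into the desired closed form. Since by construction every graph in $U_{n,d}$ is a geometric graph in $\R^d$ and distinct elements of $U_{n,d}$ correspond to non-isomorphic labeled graphs, $\#_{d,n} \geq u_{n,d}$.

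First I would unwind the recursion to get
\[
u_{n,d} \;\geq\; \prod_{k=d+1}^{n-1}\left\lfloor \tfrac{k}{d+1}\right\rfloor^{d}.
\]
Using the elementary bound $\lfloor k/(d+1)\rfloor \geq (k-d)/(d+1)$ (valid for $k\geq d+1$), this becomes
\[
u_{n,d} \;\geq\; \left(\frac{(n-d-1)!}{(d+1)^{n-d-1}}\right)^{d}.
\]
An application of Stirling in the form $m! \geq (m/e)^m$ to $m = n-d-1$ then yields
\[
u_{n,d} \;\geq\; \left(\frac{n-d-1}{(d+1)e}\right)^{d(n-d-1)}.
\]

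The remaining step, which is the main bookkeeping obstacle, is to absorb the mismatch between $(n-d-1)^{d(n-d-1)}$ and the cleaner target $n^{dn}$. I would write
\[
\left(\frac{n-d-1}{(d+1)e}\right)^{d(n-d-1)} = \left(\frac{n}{(d+1)e}\right)^{d(n-d-1)}\!\cdot\left(1-\frac{d+1}{n}\right)^{d(n-d-1)}
\]
and apply two elementary inequalities. The first is
\[
\left(1-\frac{d+1}{n}\right)^{d(n-d-1)} = \left(1+\frac{d+1}{n-d-1}\right)^{-d(n-d-1)} \geq e^{-d(d+1)},
\]
which follows from $(1+a/m)^{m} \leq e^{a}$. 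The second, needed to turn the exponent $d(n-d-1)$ into $dn$ at the cost of a bounded multiplicative loss, reduces (after taking logarithms) to the assertion $(d+1)\log(n/(d+1)) \leq n$, i.e.\ $\log x \leq x$ with $x = n/(d+1) \geq 1$, which holds since $n > d+1$. Combining these two estimates gives
\[
u_{n,d} \;\geq\; \left(\frac{n}{(d+1)e^{2}}\right)^{dn},
\]
which is the claimed lower bound on $\#_{d,n}$. The only delicate point is keeping track of the exponents in the transition from $d(n-d-1)$ to $dn$; everything else is routine application of the recursion and Stirling.
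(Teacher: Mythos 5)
Your proposal is correct and follows essentially the same route as the paper's proof: iterate the recursion from $u_{d+1,d}=1$, bound the floor by $(k-d)/(d+1)$, apply $m!\geq (m/e)^m$, and then absorb the exponent discrepancy via $(1+\tfrac{d+1}{n-d-1})^{d(n-d-1)}\leq e^{d(d+1)}$ together with $(\tfrac{n}{d+1})^{d(d+1)}\leq e^{dn}$ (equivalently $\log x\leq x$). The bookkeeping in your final step checks out and matches the paper's computation exactly.
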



For the base of the recursion, we start with the regular $d$-simplex in $\R^d$ with edges of length 1 and vertices given by $P_1$, $P_2$, ..., $P_{d+1}$. The $1$-skeleton of the $d$-simplex is a geometric graph in $\R^d$ this will be the singleton element of $U_{d + 1, d}$. Though this graph is degenerate it will still contribute to $\#_{d, n}$ which is always a lower bound for $b_0(\R^{d \times n} \setminus \Delta_{d, n})$ by the discussion following Lemma \ref{lem:broad}.

To construct the families $U_{k,d}$ for $d + 1 < k\leq n$ we need the following technical lemma which generalizes Lemma 4.1 of \cite{McDiarmidMuller}.

\begin{lemma}\label{lem:ball}
There exist constants $\epsilon_0>0$ and $C>0$ such that for all $0<\epsilon<\epsilon_0$ and all $p_i\in B(P_i,\epsilon)$ for all $i \in [d]$ there exists a unique point \[q(p_1,\dots,p_d)\in B(P_{d+1},C\epsilon)\] with $\norm{q-p_i}=1$ for all $i \in \{1, ..., d\}$.
\end{lemma}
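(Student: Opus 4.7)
The plan is to prove this via the implicit function theorem, with the key geometric input being the linear independence of the edge vectors emanating from one vertex of a regular $d$-simplex.

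Define the smooth map $F\colon (\R^d)^d \times \R^d \to \R^d$ by
\[
F(p_1,\ldots,p_d,q) = \bigl(\|q-p_1\|^2-1,\,\ldots,\,\|q-p_d\|^2-1\bigr).
\]
First I would check that $F(P_1,\ldots,P_d,P_{d+1})=0$, which is immediate because $P_1,\ldots,P_{d+1}$ are the vertices of a regular $d$-simplex with unit edges. Next I would compute $\partial F/\partial q$ at this base point: the $i$-th row is $2(P_{d+1}-P_i)^T$. The crucial step is showing this $d\times d$ matrix is invertible, which follows from the fact that $P_2-P_1,\ldots,P_{d+1}-P_1$ are linearly independent (the simplex is non-degenerate), hence so are $P_{d+1}-P_1,\ldots,P_{d+1}-P_d$.

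Given this invertibility, the implicit function theorem produces open neighborhoods $U$ of $(P_1,\ldots,P_d)$ and $V$ of $P_{d+1}$, together with a smooth function $q\colon U \to V$ satisfying $q(P_1,\ldots,P_d)=P_{d+1}$ and $F(p_1,\ldots,p_d,q(p_1,\ldots,p_d))\equiv 0$, with $q$ being the unique such solution whose value lies in $V$. Since $q$ is smooth on a relatively compact neighborhood of $(P_1,\ldots,P_d)$, it is Lipschitz there with some constant $C'>0$; thus
\[
\|q(p_1,\ldots,p_d)-P_{d+1}\| \leq C'\max_i\|p_i-P_i\| \leq C'\epsilon.
\]

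Finally I would set $C := C'$ and choose $\epsilon_0>0$ small enough that (i) $B(P_i,\epsilon_0)^d \subset U$ for $i\in[d]$, and (ii) $B(P_{d+1},C\epsilon_0) \subset V$. Then for any $0<\epsilon<\epsilon_0$ and any $p_i\in B(P_i,\epsilon)$, the point $q(p_1,\ldots,p_d)$ lies in $B(P_{d+1},C\epsilon)\subset V$ and satisfies $\|q-p_i\|=1$, while uniqueness in $B(P_{d+1},C\epsilon)$ is inherited from the uniqueness in $V$ given by the implicit function theorem. No real obstacle is expected; the only genuine content is the invertibility of the Jacobian, which is an elementary consequence of simplex non-degeneracy.
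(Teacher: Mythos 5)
Your proof is correct, and it takes a genuinely different and substantially cleaner route than the paper's. The paper constructs $q$ by a recursive geometric algorithm: it intersects $S(p_1,1)$ and $S(p_2,1)$ to get a $(d-2)$-sphere described by a center, a radius, and a point of a Grassmannian, intersects the resulting affine hyperplane with the remaining spheres, and inducts on the dimension, arguing at each stage that the relevant data depend Lipschitz-continuously on the inputs; it also explicitly identifies the two intersection points $q^{\pm}$ and selects the one nearer $P_{d+1}$. You replace all of this with a single application of the implicit function theorem to $F(p_1,\dots,p_d,q)=(\|q-p_1\|^2-1,\dots,\|q-p_d\|^2-1)$, where the only genuine content is the invertibility of the Jacobian $\partial F/\partial q$ at the base point, i.e.\ the linear independence of $P_{d+1}-P_1,\dots,P_{d+1}-P_d$, which is exactly the non-degeneracy of the simplex. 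This localizes automatically to the branch near $P_{d+1}$ (so the second intersection point $q^{-}$ never needs to be discussed), gives uniqueness in $V$ and hence in $B(P_{d+1},C\epsilon)$ for free, and yields smoothness of $q$ on a neighborhood, which is precisely the Lipschitz continuity the paper needs later to set up the cascading radii $\epsilon_i=\epsilon_0/C^{n-i}$. Two cosmetic points: the condition $B(P_i,\epsilon_0)^d\subset U$ should read $\prod_{i=1}^d B(P_i,\epsilon_0)\subset U$, and if you measure the tuple $(p_1,\dots,p_d)$ in the Euclidean norm the bound picks up a harmless factor of $\sqrt{d}$ to be absorbed into $C$. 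Neither affects the argument.
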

In other words for $\epsilon$ small enough, this lemma tells us that there is a well-defined Lipschitz continuous function, with Lipschitz constant $C$, $q$ on $B(P_1, \epsilon) \times B(P_2, \epsilon) \times \cdots \times B(P_d, \epsilon)$ mapping $(x_1, ..., x_d)$ to the unique point of the intersection of sphere $S(x_1, 1) \cap S(x_2, 1) \cap \cdots \cap S(x_d, 1)$ closest to $P_{d + 1}$. The $d=2$ case is Lemma 4.1 of \cite{McDiarmidMuller} and is essentially proved directly via a closed form for $q$ in terms of $p_1, p_2 \in B(P_1, \epsilon) \times B(P_2, \epsilon)$ that is well-defined and Lipschitz continuous for $\epsilon$ small enough. For larger values of $d$ writing down the closed form of $q$ would be much more complicated. Therefore, we instead describe algorihmically how one would compute $q$ given $p_1, ..., p_d$ sufficiently close respectively to $P_1, P_2, ..., P_d$ and show that $q$ will ultimately be a combination of Lipschitz continuous functions.
\begin{proof}[Proof of Lemma \ref{lem:ball}]
We show that for $\epsilon$ small enough, the intersection $S(p_1, 1) \cap \cdots \cap S(p_d, 1)$ with $p_i \in B(P_i, \epsilon)$ for all $i$ is two points $q^+(p_1, ..., p_d)$ and $q^-(p_1, ..., p_d)$ with $q^+(p_1,..., p_d)$ the closer of the two to $P_{d + 1}$, and that $q^+ : B(P_1, \epsilon) \times \cdots \times B(P_d, \epsilon) \rightarrow \R^d$ is Lipschitz continuous.

We will prove that $q^+$ is well-defined and Lipschitz continuous close to $P_1, P_2, ..., P_d$ by describing the algorithm one would use to compute $q^+$ and show that each step of the algorithm is given by composition or addition of Lipschitz continuous functions. Given a tuple of points $(p_1, p_2, ..., p_d) \in B(P_1, \epsilon) \times B(P_2, \epsilon) \times \cdots \times B(P_d, \epsilon)$, with $\epsilon$ sufficiently small one could compute $q^+$ via the following recursive procedure.

First find the $(d - 2)$-dimensional sphere given by the intersection of  $S(p_1, 1)$ and $S(p_2, 1)$. Now for any $0 \leq k \leq (d - 1)$ a $k$-dimensional sphere in $\R^d$ may be described completely by its center, its radius, and the affine subspace of dimension $k + 1$ in which it is contained. In other words a $k$-dimensional sphere in $\R^d$ is described by a point in $\R^d$, a positive real number, and an element of the Grassmannian $\text{Gr}(k + 1, d)$. Given $p_1$ and $p_2$ in $\R^d$ with the distance from $p_1$ to $p_2$ smaller than 2 the intersection $S(p_1, 1) \cap S(p_2, 1)$ is a $(d - 2)$-dimensional sphere. It follows that taking $\epsilon$ small enough so that for any $p_1, p_2 \in B(P_1, \epsilon) \times B(P_2, \epsilon)$, $||p_1 - p_2||^2 < 4$ we have a continuous function $(C, R, G) : B(P_1, \epsilon) \times B(P_2, \epsilon) \rightarrow \R^d \times \R^+ \times \text{Gr}(d - 1, d)$. This map sends $(p_1, p_2)$ to the $(d - 2)$-dimensional sphere $S(p_1, 1) \cap S(p_2, 1)$ with center $C(p_1, p_2)$ radius $R(p_1, p_2)$ living in the affine hyperplane $C(p_1, p_2) + G(p_1, p_2).$

Now given $(p_1, ..., p_d) \in B(P_1, \epsilon) \times \cdots \times B(P_d, \epsilon)$ with $\epsilon$ small enough we have that the intersection of $C(p_1, p_2) + G(p_1, p_2)$ with $S(p_1, 1) \cup S(p_2, 1) \cup \cdots \cup S(p_d, 1) \subseteq \R^d$ gives an arrangement of $(d - 1)$ many $(d - 2)$-dimensional spheres in the affine hyperplane $C(p_1, p_2) + G(p_1, p_2)$. The center and radii of these spheres will be determined by how $C(p_1, p_2) + G(p_1, p_2)$ intersects each $S(p_i, 1)$. By induction we find the two points of intersection of these $(d - 2)$-dimensional spheres in the $(d-1)$-dimensional Euclidean space given by the affine hyperplane $C(p_1, p_2) + G(p_1, p_2)$. Once these two points of intersection have been found we pick the one that is closest to $P_{d + 1}$ to be $q^+(p_1, ..., p_d)$.

It can be verified routinely that $(C, R, G)$ as defined above is Lipschitz continuous in each coordinate. Moreover the arrangement given by intersecting $C(p_1, p_2) + G(p_1, p_2)$ with $S(p_1, 1) \cup S(p_2, 1) \cup \cdots \cup S(p_d, 1) \subseteq \R^d$ when $p_i$ is sufficiently close to $P_i$ for all $i$, can be described by a $2(d - 1)$ tuple of points $(c_2, r_2, ..., c_{d}, r_d)$ where each $c_i$ belongs to $C(p_1, p_2) + G(p_1, p_2)$ and $r_i \in (0, 1]$. Here $c_i$ and $r_i$ are respectively the center and the radius of the $(d-2)$-dimensional sphere given by $S(p_i, 1) \cap (C(p_1, p_2) + G(p_1, p_2))$ for $i \geq 3$ with $c_2$ and $r_2$ respectively the center and radius of the $(d-2)$-dimensional sphere given by the intersection $S(p_1, 1) \cap S(p_2, 1)$ i.e. $c_2$ and $r_2$ are $C(p_1, p_2)$ and $R(p_1, p_2)$ .

By continuity $C(p_1, p_2)$ can be made arbitarily close to $C(P_1, P_2)$, $G(p_1, p_2)$ can be made arbitrarily close to $G(P_1, P_2)$, and $R(p_1, p_2)$ can be made arbitrarily close to $R(P_1, P_2)$. From here it may be verified that for $\epsilon$ small enough there is a Lipschitz continuous function from $\phi: B_{\text{Gr}(d - 1, d)}(G(P_1, P_2), \epsilon) \times B_{\R^d}(C(P_1, P_2), \epsilon) \times B_{\R}(R(P_1, P_2), \epsilon) \times B_{\R^d}(P_3, \epsilon) \times \cdots \times B_{\R^d}(P_d, \epsilon) \rightarrow (\R^{d - 1} \times \R)^{d - 1}$ mapping an element of the domain to the arrangement of $(d - 1)$ many $(d - 2)$-dimensional spheres in the affine hyperplane as described above. By induction we have that the $S^0$ at the intersection of the arrangement is Lipschitz continuous on the image of $\phi$, and finally picking the closest of the two points to the fixed point $P_{d + 1}$ is Lipschitz continuous too. Note that the base case for the induction can simply be the $d = 1$ case; given two points in $\R$ picking the one closest to a fixed point is always Lipschitz continuous when the center of the two points lives in some small enough interval around a second fixed point.\end{proof}

Let us take the sequence $0<\epsilon_1<\dots<\epsilon_n$ defined by $\epsilon_{i}=\epsilon_0/C^{n-i}$, where $\epsilon_0$ and $C$ as in the previous Lemma and we are assuming $C>1$. To construct elements of $U_{k,d}$ recursively from $U_{k-1,d}$ we will also use as an inductive hypothesis that all the elements $P=(p_1,\dots,p_k)\in U_{k,d}$ satisfy the following two properties:
\begin{itemize}
    \item[\textcolor{mycyan}{P$1$}]$\norm{p_i-P_j}<\epsilon_i$ with $i\equiv j$ mod $d+1$
    \item[\textcolor{mycyan}{P$2$}] $S(p_{i_1},1)\cap\dots\cap S(p_{i_{d+1}},1)=\emptyset$ for all distinct $\{i_1,\dots,i_{d+1}\}$
\end{itemize}
These two conditions hold for $U_{d + 1, d}$ whose vertices are $P_1, ..., P_{d + 1}$.

Condition \textcolor{mycyan}{P$1$} above naturally partitions the vertices of $G \in U_{k, d}$ into $d + 1$ distinct classes given by the clustering of the vertices of $G$ around the points $P_1$, ..., $P_d$, $P_{d + 1}$. The proof of the claimed recursive lower bound on $u_{k, d}$ will be that if $G \in U_{k-1, d}$ and, without loss of generality, $k  \equiv 0 \mod (d + 1)$ then picking a transversal $\sigma = \{p_{i_1}, p_{i_2}, \cdots, p_{i_d} \}$ of vertices of $G$ where $i_l \equiv l \mod (d + 1)$ for every $i$, we give a procedure to choose a position for a new vertex $p_k$ to be added to $G$ so that the neighborhood of $p_k$ is unique for each choice of $\sigma$ and so that \textcolor{mycyan}{P$1$} and \textcolor{mycyan}{P$2$} are satisfied still satisfied after adding $p_{k}$. As $p_k$ will depend on $\sigma$ and each choice of $\sigma$ gives a distinct neighborhood for $p_k$ we have that there are at least as many combinatorially distinct ways to extend $G$ as there are choices for $\sigma$. 



If we denote by $\mathcal{P}(G)$ for $G \in U_{k-1, d}$ the set of all such transversals, that is the number of $d$-tuples $(i_1,\dots,i_{d})$ with $1\leq i_j< k$ such that $i_j\equiv j$ mod $d+1$, we have
\[\label{eq:num}|\mathcal{P}|\geq\lfloor (k-1)/(d+1)\rfloor^d.\]

Now, let $\mathcal{M}_{\pi}$ be defined as the intersection of the open balls,
\[\mathcal{M}_{\pi}:=B(p_{i_1},1)\cap\dots\cap B(p_{i_d},1).\]
We have the following lemma, which in the 2-dimensional case is Claim 4.3 of \cite{McDiarmidMuller}. The proof, which we omit, is exactly analogous to the 2-dimensional case and relies on the fact that for each $\pi = (i_1, ..., i_d) \in \mathcal{P}$, $S(p_{i_1}, 1) \cap \cdots \cap S(p_{i_d}, 1) \cap B(P_{d + 1}, \epsilon_k)$ is a single point due to Lemma \ref{lem:ball} and that single point is unique for each choice of $\pi$ by condition \textcolor{mycyan}{P$2$}.

\begin{lemma} \label{lemma:opensubsets}There exists nonempty open sets $O_{\pi}\subset\mathcal{M}_{\pi}$, such that for all $\pi\neq \sigma\in\mathcal{P}$ we have either $O_{\pi}\cap\mathcal{M}_{\sigma}=\emptyset$ or $O_{\sigma}\cap\mathcal{M}_{\pi}=\emptyset$.
\end{lemma}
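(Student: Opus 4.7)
The plan is to construct each $O_\pi$ as a small open ball lying just inside $\mathcal{M}_\pi$, centered near a canonical boundary ``corner'' point $q_\pi$ of $\mathcal{M}_\pi$ supplied by Lemma~\ref{lem:ball}, and then to exploit an asymmetry between the relative positions of the points $\{q_\pi\}_{\pi\in\mathcal{P}}$ to verify the dichotomy claimed in the conclusion.

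For each transversal $\pi=(i_1,\dots,i_d)\in\mathcal{P}$, Lemma~\ref{lem:ball} supplies a unique point $q_\pi\in B(P_{d+1},C\epsilon_{k-1})$ with $\|q_\pi-p_{i_l}\|=1$ for every $l$, so $q_\pi$ sits at the corner of $\partial\mathcal{M}_\pi$ where the $d$ bounding spheres $S(p_{i_l},1)$ meet. Property \textcolor{mycyan}{P$2$} forces the $q_\pi$ to be pairwise distinct (otherwise a single point would lie on at least $d+1$ unit spheres) and, more importantly, for every $j\in\sigma\setminus\pi$ the distance $\|q_\pi-p_j\|$ cannot equal $1$, so that it is strictly less than or strictly greater than $1$. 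The vectors $\{p_{i_l}-q_\pi\}_{l=1}^d$ are small perturbations of the vectors from $P_{d+1}$ to the other vertices of the regular simplex and are therefore linearly independent; one can pick an inward direction $v_\pi$ (for instance $v_\pi=\sum_l(p_{i_l}-q_\pi)$) forming a positive angle with each $p_{i_l}-q_\pi$. For $t_\pi,\delta_\pi>0$ sufficiently small the ball $O_\pi:=B(q_\pi+t_\pi v_\pi,\delta_\pi)$ is contained in $\mathcal{M}_\pi$, and this will be our choice.

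The key step is the following asymmetry claim: for any two distinct transversals $\pi,\sigma\in\mathcal{P}$, the conditions $\|q_\pi-p_j\|<1$ for every $j\in\sigma\setminus\pi$ and $\|q_\sigma-p_j\|<1$ for every $j\in\pi\setminus\sigma$ cannot both hold. Granting this, we may relabel so that there exists $j\in\sigma\setminus\pi$ with $\|q_\pi-p_j\|>1$; then $q_\pi\notin\bar B(p_j,1)$, and choosing $t_\pi,\delta_\pi$ small enough (uniformly over the finite collection of pairs) makes $O_\pi$ disjoint from $\bar B(p_j,1)\supseteq\mathcal{M}_\sigma$, yielding $O_\pi\cap\mathcal{M}_\sigma=\emptyset$ as required.

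The hard part will be proving the asymmetry claim. Following the strategy of Claim~4.3 of \cite{McDiarmidMuller}, the plan is to first reduce to the case where $\pi$ and $\sigma$ differ in a single coordinate, say $i_1\neq j_1$ but $i_l=j_l$ for $l\geq 2$, and to perform a first-order expansion around the regular-simplex configuration using the implicit description of $q_\pi$ and $q_\sigma$ furnished by Lemma~\ref{lem:ball}. This expansion yields an identity of the form
\[\|q_\pi-p_{j_1}\|^2+\|q_\sigma-p_{i_1}\|^2=2+O(\epsilon_{k-1}^2),\]
which, together with the rapid decay $\epsilon_i=\epsilon_0/C^{n-i}$ (so $\epsilon_{k-1}$ can be taken arbitrarily small) and the strict inequalities provided by \textcolor{mycyan}{P$2$}, rules out both summands being below $1$. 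The general case then follows either by chaining single-coordinate swaps from $\pi$ to $\sigma$ or by carrying out the analogous perturbative computation over all indices in the symmetric difference $\pi\triangle\sigma$; uniformly controlling the higher-order error terms under such chaining is the only delicate point, and is the step requiring the most care.
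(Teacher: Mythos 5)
Your overall architecture is right: taking $O_\pi$ to be a small ball pushed slightly inside $\mathcal{M}_\pi$ from the corner $q_\pi$, and reducing the lemma to the asymmetry claim that one cannot have simultaneously $\|q_\pi-p_j\|<1$ for all $j\in\sigma\setminus\pi$ and $\|q_\sigma-p_i\|<1$ for all $i\in\pi\setminus\sigma$. The gap is in your proof of that asymmetry claim, which is the entire content of the lemma. The identity $\|q_\pi-p_{j_1}\|^2+\|q_\sigma-p_{i_1}\|^2=2+O(\epsilon_{k-1}^2)$ does not rule out both summands being below $1$: nothing prevents both from equalling, say, $1-\epsilon_{k-1}^{3}$. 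Property \textcolor{mycyan}{P$2$} only gives \emph{strict} inequalities with no quantitative gap, and shrinking $\epsilon_{k-1}$ does not help because every term in your expansion scales with it. Worse, the first-order terms you rely on (which are of the form $2(P_{d+1}-P_c)\cdot(p_{i_c}-p_{j_c})$) genuinely vanish whenever the displacement $p_{i_c}-p_{j_c}$ is orthogonal to the simplex edge $P_{d+1}-P_c$, a situation that \textcolor{mycyan}{P$1$} and \textcolor{mycyan}{P$2$} do not exclude; so this is not a matter of bookkeeping the error terms more carefully. The proposed reduction of the general case to single-coordinate swaps ``by chaining'' is a second gap: the relation you would need to chain is not transitive, and you do not say what is being composed.

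The claim can be proved exactly, without any expansion, and all differing coordinates must be used \emph{at once}. Set $D=q_\pi-q_\sigma$ and $M=\tfrac{1}{2}(q_\pi+q_\sigma)$. Since $\|q_\sigma-p_{j}\|=1$ for $j\in\sigma$ and $\|q_\pi-p_i\|=1$ for $i\in\pi$, one has the exact identities $\|q_\pi-p_{j}\|^2-1=2\,D\cdot(M-p_{j})$ for $j\in\sigma$ and $\|q_\sigma-p_{i}\|^2-1=-2\,D\cdot(M-p_{i})$ for $i\in\pi$. For a cluster $c$ shared by $\pi$ and $\sigma$ this gives $D\cdot(M-p_{i_c})=0$; for a cluster where they differ, the two ``bad'' inequalities give $0<D\cdot(M-p_{i_c})<D\cdot(p_{j_c}-p_{i_c})\le 2\epsilon_{k-1}\norm{D}$. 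Since $M-p_{i_c}=(P_{d+1}-P_c)+O(\epsilon_{k-1})$, in either case $|D\cdot(P_{d+1}-P_c)|\le (C+3)\epsilon_{k-1}\norm{D}$ for \emph{every} $c\in\{1,\dots,d\}$. The vectors $P_{d+1}-P_1,\dots,P_{d+1}-P_d$ form a basis of $\R^d$, so for $\epsilon_0$ small this forces $D=0$, i.e. $q_\pi=q_\sigma$; that point would then lie on at least $d+1$ of the unit spheres (as $|\pi\cup\sigma|\ge d+1$), contradicting \textcolor{mycyan}{P$2$}. This is where \textcolor{mycyan}{P$2$} actually enters — as the source of $q_\pi\neq q_\sigma$ — rather than as a sign condition feeding a perturbative estimate.
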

Now, for each $\pi\in\mathcal{P}$ let us pick an arbitrary
\[q_{\pi}=O_{\pi}\setminus \bigcup_{i=1}^{k-1} S(p_i,1),\]
and we have that the $\R^d$-geometric graph obtained by adding the vertex $q_{\pi}$ to $G$, which we will denote $(G, q_{\pi})$, satisfies conditions \textcolor{mycyan}{P$1$} and \textcolor{mycyan}{P$2$}. Moreover for every choice of $\pi$ we obtain a unique way to extend $G$ by the following lemma.

\begin{lemma}\label{lem:notisog}
If $\pi\neq\sigma \in \mathcal{P}(G)$ for $G \in U_{k - 1, d}$ the geometric graphs $(G,q_{\pi})$ and $(G,q_{\sigma})$ are not isomorphic.
\end{lemma}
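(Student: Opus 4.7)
The plan is to show directly that the neighborhoods of the newly added vertex (which has label $k$ in both cases) are different edge sets, contradicting isomorphism as labeled graphs. The setup from Lemma \ref{lemma:opensubsets} will do essentially all the work; the only real task is to translate the geometric separation of the $O_\pi$'s into a combinatorial difference between the two labeled geometric graphs.

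First I would invoke Lemma \ref{lemma:opensubsets} to conclude, without loss of generality, that $O_\pi \cap \mathcal{M}_\sigma = \emptyset$. Since $q_\pi$ was chosen in $O_\pi$, this means $q_\pi \notin \mathcal{M}_\sigma = B(p_{j_1},1) \cap \cdots \cap B(p_{j_d},1)$, so there exists an index $j \in \sigma$ with $\|q_\pi - p_j\| \geq 1$. The construction explicitly chose $q_\pi$ in $O_\pi \setminus \bigcup_{i=1}^{k-1} S(p_i,1)$, so in fact $\|q_\pi - p_j\| \neq 1$, giving $\|q_\pi - p_j\| > 1$. Consequently, in the labeled geometric graph $(G, q_\pi)$ the vertex labeled $k$ (namely $q_\pi$) is not adjacent to the vertex labeled $j$.

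Next I would observe that in $(G, q_\sigma)$ exactly the opposite holds: since $q_\sigma \in O_\sigma \subset \mathcal{M}_\sigma$, we have $\|q_\sigma - p_{j_\ell}\| < 1$ for every $\ell = 1, \dots, d$, so in particular $\|q_\sigma - p_j\| < 1$. Hence in $(G, q_\sigma)$ the vertex labeled $k$ is adjacent to the vertex labeled $j$. Because the two graphs share the same vertex labels $\{1, \dots, k-1, k\}$ and the adjacency between the labels $j$ and $k$ differs between them, the two labeled geometric graphs $(G, q_\pi)$ and $(G, q_\sigma)$ cannot be isomorphic as labeled graphs. The only non-routine step is the appeal to Lemma \ref{lemma:opensubsets}, which was already established; everything else is a direct unwinding of definitions.
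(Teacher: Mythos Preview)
Your proof is correct and follows essentially the same route as the paper: both arguments use Lemma \ref{lemma:opensubsets} to get (without loss of generality) $q_\pi\notin\mathcal{M}_\sigma$ while $q_\sigma\in\mathcal{M}_\sigma$, and then read off that the neighborhood of the new vertex $k$ differs between the two labeled graphs. The paper phrases this as $N(q_\pi)\neq N(q_\sigma)$ (its Lemma \ref{lemma:differentneighborhood}), whereas you go one step further and name a specific label $j\in\sigma$ at which the adjacency to $k$ differs; this is just a more explicit rendering of the same argument.
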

This holds because we have that $q_{\pi}$ and $q_{\sigma}$ will have different sets of neighbors, generalizing Claim 4.4 of \cite{McDiarmidMuller}.

\begin{lemma}\label{lemma:differentneighborhood}
If $\pi\neq\sigma \in \mathcal{P}(G)$ for $G \in U_{k - 1, d}$, $N(q_{\pi}) \neq N(q_{\sigma})$ where $N(v)$ denotes the neighbors of a point $v$ in $\R^d$ in the geometric graph $(G, v)$, that is the vertices of $G$ at distance less than 1 from $v$.
\end{lemma}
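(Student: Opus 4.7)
The plan is to read off the inequality $N(q_\pi) \neq N(q_\sigma)$ directly from Lemma \ref{lemma:opensubsets}. The key observation is that the condition $q \in \mathcal{M}_\tau$ is exactly the condition that $\{p_{i_1}, \ldots, p_{i_d}\} \subseteq N(q)$ (with $\tau = (i_1, \ldots, i_d)$), so the sets $\mathcal{M}_\tau$ encode ``being adjacent to the transversal $\tau$''. Hence any separation of $O_\pi$ from $\mathcal{M}_\sigma$ translates into the presence of a vertex that lies in exactly one of the two neighborhoods.

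Concretely, I would invoke Lemma \ref{lemma:opensubsets} and assume without loss of generality that $O_\pi \cap \mathcal{M}_\sigma = \emptyset$. Since $q_\pi \in O_\pi$, this forces $q_\pi \notin \mathcal{M}_\sigma$, so there exists some index $j_\ell$ appearing in $\sigma$ with $\|q_\pi - p_{j_\ell}\| \geq 1$. The choice of $q_\pi$ in $O_\pi \setminus \bigcup_{i=1}^{k-1} S(p_i,1)$ guarantees that $q_\pi$ avoids the sphere $S(p_{j_\ell},1)$, so in fact $\|q_\pi - p_{j_\ell}\| > 1$ and therefore $p_{j_\ell} \notin N(q_\pi)$. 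On the other hand, $q_\sigma \in O_\sigma \subseteq \mathcal{M}_\sigma$, which by definition of $\mathcal{M}_\sigma$ means $\|q_\sigma - p_{j_\ell}\| < 1$, i.e.\ $p_{j_\ell} \in N(q_\sigma)$. Thus $p_{j_\ell}$ witnesses $N(q_\pi) \neq N(q_\sigma)$.

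I do not expect any serious obstacle here; the real content has already been absorbed into Lemma \ref{lemma:opensubsets} and into the careful choice of $q_\pi$ off the exceptional spheres. The only minor care is to translate the topological statement $q_\pi \notin \mathcal{M}_\sigma$ into the combinatorial statement ``some specific vertex of the transversal $\sigma$ is not a neighbor of $q_\pi$'', and to note that strict inequality (rather than only $\geq$) is available thanks to $q_\pi \notin \bigcup_i S(p_i,1)$.
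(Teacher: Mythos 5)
Your proposal is correct and follows the same route as the paper: both arguments rest on the equivalence $q\in\mathcal{M}_\tau \iff \{p_{i_1},\ldots,p_{i_d}\}\subseteq N(q)$ together with Lemma \ref{lemma:opensubsets}, with yours merely making the witness vertex $p_{j_\ell}$ explicit. (Note that since $N$ is defined by strict inequality $\|q-p\|<1$, the bound $\|q_\pi-p_{j_\ell}\|\geq 1$ already gives $p_{j_\ell}\notin N(q_\pi)$, so the appeal to $q_\pi\notin\bigcup_i S(p_i,1)$ is not even needed.)
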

\begin{proof}
For $\pi \neq \sigma$ we have that $\sigma \subseteq N(q_\pi)$ if and only if $q_{\pi} \in \mathcal{M}_{\sigma}$. Clearly $\sigma \subseteq N(q_{\sigma})$ and $\pi \subseteq N(q_{\pi})$ but by Lemma \ref{lemma:opensubsets} it cannot be the case that both $\sigma \subseteq N(q_{\pi})$ and $\pi \subseteq N(q_{\sigma})$.
\end{proof}

Now, for each $G \in U_{k - 1, d}$ and $\pi\in\mathcal{P}(G)$ we construct a geometric graph $(P,q_{\pi})$ which satisfies conditions \textcolor{mycyan}{P$1$} and \textcolor{mycyan}{P$2$} and which satisfies Lemma \ref{lem:notisog}. Then,
\[u_{k,d}\geq|\mathcal{P}|\cdot u_{k-1,d}\geq \lfloor (k-1)/(d+1)\rfloor^d\cdot u_{k-1,d}.\]

\subsubsection{Proof of Theorem \ref{theo:lowbound}}\label{sec:proof}We have \[\#_{n,d}\geq u_{n,d}\geq\left(\prod_{i=d+1}^{n-1} \left \lfloor \frac{i}{d+1} \right \rfloor\right)^d\geq\left(\prod_{i=d+1}^{n-1} \frac{i-d}{d+1}\right)^d\geq\left(\frac{(n-d-1)!}{(d+1)^{n-d-1}}\right)^d.\]
Using the estimate $k!\geq\left(\frac{k}{e}\right)^k$ we get
\[\#_{n,d}\geq \left(\frac{n-d-1}{(d+1)e} \right)^{d(n-d-1)}\geq\left(\frac{n}{(d+1)e}\right)^{dn}\left(\frac{n}{d+1}\right)^{-d(d+1)},\]
where for the last inequality we used $\left(1-\frac{d+1}{n}\right)^{d(n-d-1)}\geq e^{-d(d+1)}$, which derives from $(1+\frac{d+1}{n-d-1})^{d(n-d-1)}\leq e^{d(d+1)}$. We then use $\left(\frac{n}{d+1}\right)^{d(d+1)}\leq\left(e^{d}\right)^n$ to obtain the lower bound.

\subsection{The top Betti numbers}\label{sec:topbetti}
The goal of this section will be to prove Theorem \ref{thm:highbettinumbers} regarding the low degree Betti numbers of the one point compactification of the discriminant.

We will prove this using the generalized Nerve Lemma of Bj\"{o}rner.
\begin{theorem}[Special case of Theorem 6 of \cite{Bjorner}]\label{lemma:BjornerLemma}
Let $X$ be a regular $CW$ complex and $(X_i)_{i \in I}$ a family of subcomplexes such that $X = \bigcup_{i \in I}X_i$. Suppose that every non-empty finite intersection $X_{i_1} \cap \cdots \cap X_{i_t}$ is $(k - t + 1)$-connected then $X$ is $k$-connected if and only if the nerve $\mathcal{N}(X_i)$ is $k$-connected.
\end{theorem}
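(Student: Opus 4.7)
The plan is to insert an intermediate space $Y$ between $X$ and $\mathcal{N} := \mathcal{N}(X_i)$, with one projection a weak equivalence and the other a $(k+1)$-equivalence; combining them then gives $\pi_j(X) \cong \pi_j(\mathcal{N})$ for $j \le k$, from which the biconditional follows immediately. I would take $Y$ to be the homotopy colimit of the non-empty intersections $X_\sigma := X_{i_0} \cap \cdots \cap X_{i_p}$, indexed by simplices $\sigma = \{i_0 < \cdots < i_p\}$ of $\mathcal{N}$, that is
$$Y := \Bigl( \bigsqcup_{\sigma} X_\sigma \times \Delta^\sigma \Bigr)\Big/\sim,$$
with the standard simplicial identifications gluing the $\sigma'$-face of $X_\sigma \times \Delta^\sigma$ to $X_{\sigma'} \times \Delta^{\sigma'}$ via the inclusion $X_\sigma \hookrightarrow X_{\sigma'}$ whenever $\sigma' \subsetneq \sigma$. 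There are two obvious projections: $\phi : Y \to X$, $(x,t)\mapsto x$, and $\psi : Y \to \mathcal{N}$, $(x,t)\mapsto t$.

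First I would verify that $\phi$ is a weak homotopy equivalence. For any $x \in X$, let $S_x := \{i : x \in X_i\}$; then $\phi^{-1}(x)$ is the geometric realization of the full simplex on $S_x$ (every non-empty subset of $S_x$ indexes an $X_\sigma$ containing $x$), hence contractible. Since each inclusion $X_\sigma \hookrightarrow X_{\sigma'}$ is a cofibration (as an inclusion of subcomplexes of the regular CW complex $X$), the standard projection lemma for homotopy colimits over good covers then yields that $\phi$ is a weak equivalence.

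The main step is to prove that $\psi$ is a $(k+1)$-equivalence. Pulling back the skeletal filtration of $\mathcal{N}$ through $\psi$ yields a first-quadrant Mayer--Vietoris/Leray-type spectral sequence
$$E_1^{p,q} = \bigoplus_{|\sigma| = p+1} \widetilde{H}_q(X_\sigma)\ \Longrightarrow\ \widetilde{H}_{p+q}(Y),$$
whose row $q=0$ is precisely the augmented simplicial chain complex of $\mathcal{N}$. The hypothesis that $X_\sigma$ is $(k-p)$-connected for $|\sigma|=p+1$ forces $E_1^{p,q}=0$ in the entire region $0 < q \le k-p$. Consequently, in total degree at most $k$ only the bottom row survives, giving $E_2^{p,0} = H_p(\mathcal{N})$, and all differentials into or out of $E_r^{p,0}$ with $p \le k$ land in (or come from) the vanishing region. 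Hence $\widetilde{H}_j(Y) \cong \widetilde{H}_j(\mathcal{N})$ for every $j \le k$. To promote this to the homotopy level, I would apply van Kampen to the cover in order to match $\pi_1$'s, and then invoke Hurewicz and Whitehead to conclude $\pi_j(Y) \cong \pi_j(\mathcal{N})$ for all $j \le k$.

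Composing with the weak equivalence $\phi$ gives $\pi_j(X) \cong \pi_j(\mathcal{N})$ for $j \le k$, from which the biconditional is immediate. The main obstacle is the rigorous handling of the spectral sequence differentials together with the $\pi_1$ step, which requires a careful van Kampen argument applied simplex-by-simplex to the cover; the latter is the only point that genuinely uses regularity of the CW structure beyond cofibrancy. For the application in Section~\ref{sec:topbetti}, however, a purely homological version of the argument already suffices and circumvents the $\pi_1$ subtleties.
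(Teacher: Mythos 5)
The paper itself offers no proof of this statement: it is imported verbatim as a special case of Theorem 6 of \cite{Bjorner}, so there is no internal argument to compare yours against. What you propose is a reconstruction of the standard homotopy-colimit (``blow-up complex'') proof of the nerve theorem, and in outline it is sound: $\phi\colon Y\to X$ is a homotopy equivalence by the Projection Lemma for covers of a CW complex by subcomplexes (point preimages are full simplices, hence contractible, and the pieces are cofibrantly included), and the spectral sequence of the filtration of $Y$ by $\psi^{-1}$ of the skeleta of $\mathcal{N}$ does collapse in total degree $\le k$ under the hypothesis that $t$-fold intersections are $(k-t+1)$-connected, giving $\widetilde{H}_j(Y)\cong\widetilde{H}_j(\mathcal{N})$ for $j\le k$. (Minor bookkeeping: the $E_1$ term should be the \emph{unreduced} homology $\bigoplus_{|\sigma|=p+1}H_q(X_\sigma)$, so that the $q=0$ row is the simplicial chain complex of $\mathcal{N}$; with reduced homology of the pieces the bottom row would be zero.)

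The one step that would fail as written is the last one: from a $\pi_1$-isomorphism together with an isomorphism on \emph{integral} homology in degrees $\le k$ you cannot in general conclude $\pi_j(Y)\cong\pi_j(\mathcal{N})$ for $j\le k$ by ``Hurewicz and Whitehead''; that implication needs simple connectivity, or homology with local coefficients (equivalently, a comparison of universal covers), and neither $Y$ nor $\mathcal{N}$ is known to be simply connected a priori. The full isomorphism of homotopy groups is exactly Björner's stronger theorem and requires that extra input (or his fiber-theoretic induction). Fortunately the statement you actually need is only the biconditional, and your ingredients do suffice once the logic is reordered: for $k\ge 1$, if $\mathcal{N}$ is $k$-connected then your van Kampen step gives $\pi_1(Y)\cong\pi_1(\mathcal{N})=1$ and the spectral sequence gives $\widetilde{H}_j(Y)=0$ for $j\le k$, so Hurewicz applies to the now simply connected $Y$ and $X\simeq Y$ is $k$-connected; conversely, if $X$ (hence $Y$) is $k$-connected the same argument applied to $\mathcal{N}$ shows $\mathcal{N}$ is $k$-connected; the case $k=0$ is elementary. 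So replace the claim of a degreewise isomorphism of homotopy groups by this two-sided application of Hurewicz after simple connectivity has been extracted from the hypothesis, and spell out the van Kampen comparison $\pi_1(Y)\cong\pi_1(\mathcal{N})$ (which uses that for $k\ge 1$ the pieces are simply connected and nonempty pairwise intersections are connected). As you note, for the use made of this lemma in Section \ref{sec:topbetti} the homological version alone would already be enough.
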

Recall that given a CW-complex $X$ and a covering by subcomplexes $(X_i)_{i \in I}$ the nerve of the covering $\mathcal{N}(X_i)$ is the simplicial complex on the vertex set $I$ where $\sigma = [i_1, ..., i_t]$ is a face of the nerve if and only if $X_{i_1} \cap \cdots \cap X_{i_t}$ is nonempty.

The covering that we will use for $\hat{\Delta}_{d, n}$ will be given by $(\hat{\Delta}_{d, n}^{i, j})_{1 \leq i < j \leq n}$ where $\hat{\Delta}_{d, n}^{i, j}$ denotes the compactification in $\R^{d \times n}$ of the space $\Delta_{d, n}^{i, j} = \{(x_1, ..., x_n) \in \R^{d \times n} \mid \|x_i - x_j\|^2 = 1\}$. 

Now each intersection of a set of $\Delta_{d, n}^{i, j}$'s is naturally associated to a graph $G$ and a space $\Delta_{d, n}^G$ described in (\ref{eq:DG}). 


To be able to apply the generalized Nerve Lemma, a key step will be to establish the following about the topology of $\hat{\Delta}_{d,n}^G$.
\begin{lemma}\label{lemma:graphtopology}
For any graph $G$ with $\beta_0(G)$ connected components ${\Delta}_{d, n}^G$ is a direct product of a compact set $K := K(G)$ of dimension at most $(d - 1)(n - \beta_0)$ and $\R^{d \times \beta_0}$. Therefore $\hat{\Delta}_{d, n}^G$ is $d\beta_0 - 1$ connected.
\end{lemma}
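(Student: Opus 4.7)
The plan is to exhibit an explicit product decomposition of $\Delta_{d,n}^G$ from which both the dimension bound on $K$ and the connectivity of $\hat{\Delta}_{d,n}^G$ will follow.

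First I would decompose $G$ into its connected components $C_1,\dots,C_{\beta_0}$, pick a root vertex $v_k\in C_k$ for each $k$, and perform the linear change of variables on $\R^{d\times n}$ given by
\[
(x_1,\dots,x_n)\;\longmapsto\;\Bigl((x_{v_k})_{k=1}^{\beta_0},\;(x_w-x_{v_{c(w)}})_{w\notin\{v_1,\dots,v_{\beta_0}\}}\Bigr),
\]
where $c(w)$ is the index of the component containing $w$. Since each defining equation $\|x_i-x_j\|^2=1$ involves only a pair of vertices in the same component and depends only on their difference, this linear isomorphism carries $\Delta_{d,n}^G$ onto $\R^{d\beta_0}\times K$, where $K$ is the set of admissible relative-position tuples.

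Next I would verify that $K$ has the claimed properties. For compactness, $K$ is closed (cut out by polynomial equations) and bounded: whenever $w$ and $v_{c(w)}$ are joined in $G$ by a path of length $\ell$, the triangle inequality together with the unit-distance constraint gives $\|x_w-x_{v_{c(w)}}\|\leq\ell$. For the dimension bound, I would fix a spanning forest $F\subseteq G$ (which has exactly $n-\beta_0$ edges) rooted at the vertices $v_k$, and consider the map sending a point of $K$ to the tuple $(x_w-x_{p(w)})_w$ of unit vectors along the parent edges in $F$. This map is well-defined (each such vector is a unit vector since $(w,p(w))\in E(G)$) and injective (the positions of all non-root vertices can be reconstructed from these vectors by summing along tree-paths from the roots), giving an embedding $K\hookrightarrow (S^{d-1})^{n-\beta_0}$. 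Hence $\dim K\leq (d-1)(n-\beta_0)$.

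For the connectivity claim, I would apply the standard identification
\[
\hat{\Delta}_{d,n}^G \;=\; (\R^{d\beta_0}\times K)^{+}\;\simeq\; S^{d\beta_0}\wedge K_{+}\;\simeq\;\Sigma^{d\beta_0}K_{+},
\]
valid because $K$ is compact and $(\R^m)^+=S^m$; equivalently, $\hat{\Delta}_{d,n}^G$ is the Thom space of the trivial rank-$d\beta_0$ bundle over $K$. Its reduced homology satisfies $\tilde H_i(\Sigma^{d\beta_0}K_{+})\cong\tilde H_{i-d\beta_0}(K_{+})$, which vanishes for $i<d\beta_0$. Combined with the fact that $\Sigma^{d\beta_0}K_{+}$ is simply connected when $d\beta_0\geq 2$ (as any double suspension is) and path-connected when $d\beta_0=1$ (as any suspension of a nonempty space is), the Hurewicz theorem yields that $\hat{\Delta}_{d,n}^G$ is $(d\beta_0-1)$-connected.

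The main subtlety to pin down is the identification $(\R^{d\beta_0}\times K)^{+}\simeq \Sigma^{d\beta_0}K_{+}$: although standard for $K$ compact, it is the bridge between the elementary product decomposition and the homotopical conclusion, and I would spell it out explicitly via the quotient description $\hat{\Delta}_{d,n}^G\cong (D^{d\beta_0}\times K)/(\partial D^{d\beta_0}\times K)$.
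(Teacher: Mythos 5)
Your proof is correct and follows essentially the same route as the paper's: decompose into connected components, pass to relative coordinates rooted at one vertex per component to get $\Delta_{d,n}^G\cong \R^{d\beta_0}\times K$ with $K$ embedded in $(S^{d-1})^{n-\beta_0}$ via a spanning forest, and then identify $\hat{\Delta}_{d,n}^G$ with the Thom space of the trivial rank-$d\beta_0$ bundle over $K$. The only difference is cosmetic: where the paper cites Lemma 18.1 of Milnor--Stasheff for the connectivity of the Thom space, you unwind it explicitly as the $d\beta_0$-fold suspension of $K_+$ and apply Hurewicz, which is a perfectly valid (and more self-contained) way to finish.
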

Toward proving this result it will be helpful to observe that $\Delta_{d, n}^G$ is the space of graph homomorphisms from $G$ into the \emph{unit distance graph on $\R^d$}. The unit distance graph on $\R^d$ is the graph whose vertices are the points in $\R^d$ with an edge between two points if and only if the two points are at distance 1 from each other. 
\begin{proof}[Proof of Lemma \ref{lemma:graphtopology}]
Let $H$ be a component of $G$ and let $T$ be a spanning tree of $H$. Clearly $\Delta_{d, |V(H)|}^H \subseteq \Delta_{d, |V(H)|}^T$ as any homomorphism from $H$ to the unit distance graph on $\R^d$ induces a homomorphism from $T$ to the unit distance graph on $\R^d$. Moreover, we have that $\Delta_{d, |V(H)|}^T \sim \R^d \times (S^{d - 1})^{|E(T)|}$. Indeed we may regard $T$ as being a rooted tree and we can map the root of $T$ to any point of $\R^d$ and from there every vertex may live anywhere on the sphere of radius 1 centered at the image of its parent vertex. Now taking $\Delta^{H}_{d, |V(H)|}$ and  modding out by the $\R^d$ factor coming from the choice of image for the root in $T$ we have a closed subset of the compact set $(S^{d - 1})^{|E(T)|} = (S^{d - 1})^{|V(H)| - 1}$. Thus, without fixing the image of the root, we have that $\Delta_{d, |V(H)|}^H$ is the direct product of $\R^d$ and the compact space $K(H)$ given by the closed subset of $(S^{d - 1})^{|V(H)| - 1}$. It is clear that we may describe any homomorphism from $G$ to the unit distance graph on $\R^d$ as a product of graph homomorphisms on the connected components. We have that 
\[\Delta_{d, n}^G \cong K(G) \times \R^{d\beta_0}\]
where $K(G)$ is the direct product of $K(H)$ over all connected components $H$. Thus $K(G)$ is contained in some $n - \beta_0$ fold product of $(d - 1)$-dimensional sphere so it is at most $(d - 1)(n - \beta_0)$ dimensional. 

We now turn our attention to the compactification of $\Delta_{d, n}^G$. By the description of $\Delta_{d,n}^G$, we have that $\Delta_{d, n}^G$ is a $d\beta_0$-ranked vector bundle over a compact CW complex (since we are working with semialgebraic sets), so its compactification is the Thom space of this vector bundle which is $d \beta_0 - 1$ connected by Lemma 18.1 of \cite{MilnorStasheff}.
\end{proof}

\begin{proof}[Proof of Theorem \ref{thm:highbettinumbers}]
We apply Lemma \ref{lemma:BjornerLemma} to prove that $\hat{\Delta}_{d, n}$ is $n + d - 3$ connected. We consider the cover of $\hat{\Delta}_{d, n}$ by $(\hat{\Delta}_{d, n}^{i, j})_{1 \leq i < j \leq n}$ as discussed above so that for any $t$, the $t$-fold intersection of complexes in the cover is $\hat{\Delta}_{d, n}^G$ for some graph $G$ on $t$ edges. Moreover we observe that the nerve of this covering is just the simplex on $\binom{n}{2}$ vertices, as any intersection of the spaces in the cover at least contains the point at infinity so in particular the nerve is $(n + d - 3)$-connected.  Thus it suffices to check that $\hat{\Delta}_{d, n}^G$ is $n + d - 3 -t + 1 = n - t + d - 2$ connected for any graph $G$ on $n$ vertices with $t$ edges. By Lemma \ref{lemma:graphtopology}, it suffices to verify that for such a graph $n - t + d - 2 \leq d\beta_0(G) - 1$. This always holds for $d \geq 1$ because $\beta_0 \geq 1$, $\beta_1 \geq 0$ and $n - t = \beta_0 - \beta_1$.  

To show that $\hat{\Delta}_{n, d}$ is not $(n + d - 2)$-connected we use duality and verify that $\R^{d \times n} \setminus \Delta_{d, n}$ has at least one $dn - 1 - (n + d - 2) = (n - 1)(d - 1)$ reduced homology class. The path components of $\R^{d \times n} \setminus \Delta_{d, n}$ are the rigid isotopy classes of graph on $n$ vertices in $\R^d$. We consider the rigid isotopy classes of the empty graph on $n$ vertices. The rigid isotopy classes of the empty graph on $n$ vertices gives all the configurations of $n$ points in $\R^d$ so that the distance between any pair of them is larger than 1. This is  homotopy equivalent to  $\mathrm{Conf}_n(\R^d)$, see Examples \ref{ex:confhomology} and \ref{ex:confbettinumbers}, which has its top positive reduced Betti number in dimension $(n - 1)(d-1)$. 
\end{proof}

\section{Examples}
Here we work out a few examples for computing the Betti numbers of $\R^{d \times n} \setminus \Delta_{d, n}$ for small values of $d$ and $n$. We start with the case that $d = 1$. In the case that $d = 1$, computing the topology of $\hat{\Delta}_{1, n}^G$ across all graphs $G$ on $n$ vertices is sufficient to compute $b_0(\R^{1 \times n} \setminus \Delta_{1, n})$. While we have shown the the number of rigid isotopy classes of $\R$-geometric graphs on $n$ vertices is given by the number of labeled semiorders on $n$ elements, we work out a computation for $n = 3$ here primarily to show how spectral sequences and Alexander--Pontryagin duality can be used to compute the exact number of rigid isotopy classes of $\R$-geometric graphs. 

\begin{example}[$n = 3$, $d = 1$]
By Alexander--Pontryagin duality, it suffices to compute the Betti numbers of $\hat{\Delta}_{1, 3}$. This is a union of three compactified quadrics given by the solutions in $\R$ to $|x_i- x_j|^2 = 1$ for $1 \leq i < j \leq 3$. Each of these is simply the disjoint union of two hyperplanes in $\R^{1 \times 3}$. Thus $\hat{\Delta}_{1, 3}$ is a 2-dimensional cell complex and we know by Theorem \ref{thm:highbettinumbers} that this complex is 1-connected so only the 2nd Betti number is interesting. This is not surprising as the dual in $S^3$ of $\hat{\Delta}_{1,3}$ is $\R^3 \setminus \Delta_{1, 3}$ which is a disjoint union of finite intersections of halfspaces, so only its zeroth Betti number is interesting. 

Now by computing the $E_1$ page of the Mayer--Vietoris spectral sequence given by the covering of $\hat{\Delta}_{1, 3}$ by $\left(\hat{\Delta}_{1, 3}^{i, j}\right)_{1 \leq i < j \leq n}$ we have that the Euler characteristic of $\hat{\Delta}_{1, 3}$ will be given by 
\[\chi(\hat{\Delta}_{1, 3}) = \sum_{0 \leq i \leq 2, 0 \leq j \leq 2} (-1)^{i + j}\dim(E_1^{i,j}).\]
On the other hand $\chi(\hat{\Delta}_{1, 3}) = 1 + b_2(\hat{\Delta}_{1, 3})$, so computing the first page is enough. 

Now
\[E_{1}^{i,j} = \bigoplus_{G \text{ a graph on $\{1, 2, 3\}$ with exactly $i + 1$ edges}} H^j(\hat{\Delta}^{G}_{1, 3}),\]
So we can compute $\dim(E_1^{i,j})$ for every value of $i$ and $j$. If $i= 0$ we are looking at $\hat{\Delta}_{1, 3}^G$ for $G$ a graph with three vertices and one edge. Suppose the edge is between vertex $1$ and vertex $2$, then we may place vertex 1 anywhere on $\R$ then vertex $2$ at either point of the $S^0$ centered at the location of vertex 1. Next vertex 3 may be mapped into $\R$ arbitrarily. So $\Delta_{1, 3}^G$ is given by $S^0 \times \R \times \R$ which compactifies in $\R^3$ to $S^2 \vee S^2$, and there are three graphs with exactly 1 edge. Next we look at graphs with two edges. If $G$ is such a graph then it is easy to see that $\Delta_{1, 3}^G$ is $\R \times S^0 \times S^0$ which compactifies to $S^1 \vee S^1 \vee S^1 \vee S^1$; there are three such choices for $G$. Finally if $G$ is the triangle then there is no way to map $G$ into the unit distance graph on $\R$ so $\Delta^G_{1, 3} = \emptyset$ which compactifies to the point at infinity. Therefore the following table stores the values of $\dim(E_1^{i,j})$
\begin{center}
\begin{tabular}{c|c|c|c}
2 & 6 & 0 & 0 \\ 
1 & 0 & 12 & 0 \\
0 & 3 & 3 & 1 \\ \hline
$j \uparrow, i \rightarrow$ & 0 & 1 & 2  \\
\end{tabular}
\end{center}
From this table we compute the Euler characteristic to be 19, so $b_2(\hat{\Delta}_{1, 3}) = 18$, from which it follows by duality that $b_0(\R^3 \setminus \Delta_{1, 3}) = 18 + 1 = 19$, recovering the number of labeled semiorders on $\{1, 2, 3\}$.
\end{example}

We could take the same approach to compute $b_0(\R^{1 \times n} \setminus \Delta_{1, n})$ for any $n$. For any graph $G$, $\Delta^G_{1, n}$ is always a product of a finite set $X := X(G)$, which is possibly empty, and $\R^{\beta_0(G)}$ so $\hat{\Delta}_{1, n}^G$ is a wedge of $|X|$ spheres of dimension $\beta_0(G)$ or the point at infinity if $X = \emptyset$. Moreover $X(G)$ will be empty if and only if there is no way to map $G$ into the unit distance graph on the real line, but this means that $X(G)$ is empty if and only if $G$ is not bipartite.

On the other hand if $G$ is bipartite we can compute $|X(G)|$ exactly, meaning that this approach could be used to compute the first page of the spectral sequence for $d = 1$ and any value of $n$. However, we should not expect to be able to do so in a reasonable amount of time for large $n$, assuming P $\neq$ NP as we explain below.

We have discussed $\Delta_{d, n}^G$ as the space of graph homomorphisms from $G$ into the unit distance graph on $\R^d$. but it turns out that given two graphs $G$ and $H$, it is in general NP-complete to decide if there are any graph homomorphisms from $G$ to $H$ by a result of Hell and Ne\v{s}et\v{r}il \cite{HellNesetril}. Indeed, \cite{HellNesetril} show that for any nonbipartite graph $H$ it is NP-complete to decide whether a graph $G$ admits any homomorphism into $H$. For us this means that we should not be able to even decide in general if $\Delta_{d,n}^G$ is empty or not if $d \geq 2$ as the unit distance graph on $\R^d$ for $d \geq 2$ is not bipartite. Now for $d = 1$, $\Delta_{d, n}^G$ will be computable, but a result Dyer and Greenhill \cite{DyerGreenhill} shows that the problem of enumerating graph homomorphism into a fixed bipartite graph $H$ is \#P-complete unless $H$ is a special type of bipartite graph, which does not include the unit distance graph on $\R$. Given these results, we should look for other ways to exactly count the number of isotopy classes given $d$ and $n$ than computing the full Mayer--Vietoris spectral sequence. To have at least some explicit examples for larger $d$ we work out the Betti numbers for $n = 3, 4$ and any $d \geq 2$, though the methods we use are rather ad hoc and doesn't generalize to higher values of $n$.
\begin{example}[$n = 3$, $d \geq 2$]
Here we explicitly compute the topology of each rigid isotopy class on 3 vertices. The space $W_{G, d}$ for $G$ the complete graph on 3 vertices is contractible, in fact it is easy to see that it is convex. 

For $G$ on two edges we observe that $W_{G, d}$ has the topology of $S^{d -1}$. To see this we consider $G$ as a path on 3 vertices, the first vertex on the path can go anywhere, the second vertex can go anywhere in the punctured ball of radius 1 around the first vertex; it must be punctured since only vertices with identical closed neighborhoods can map to the same point. After mapping the first two vertices the final vertex can be moved freely in some contractible subspace of $\R^d$ determined by the position of the first two vertices. 

Now for $G$ on 1 edge we observe have that $W_{G,d}$ has the topology of the configuration space of two points in $\R^d$, which is just $S^{d - 1}$. Finally if $G$ is the empty graph then $W_{G, d}$ is homotopy equivalent to the configuration space of 3 points in $\R^d$ which is known to have $b_0 = 1$, $b_{d - 1} = 3$ and $b_{2(d - 1)} = 2$. 

Putting this all together, we have that the empty graph contributes 1 to $b_0$, $3$ to $b_{d - 1}$ and $2$ to $b_{2(d - 1)}$, the three graphs on one edge each contribute $1$ to $b_0$ and $1$ to $b_{d - 1}$, the three graphs on two edges also each contribute $1$ to $b_0$ and $1$ to $b_{d - 1}$ and the complete graph contributes $1$ to $b_0$. So we have $8 + 9x^{d - 1} + 2x^{2d - 2}$ as the Poincar\'e polynomial for $\R^{d \times 3} \setminus \Delta_{d, 3}$.
\end{example}

\begin{example}[$n = 4$, $d \geq 2$]\label{ex:4vertices}
The case $n = 4$ is more interesting but again the computation is rather ad hoc. We examine each graph $G$ on 4 vertices and compute the Poincar\'e polynomial for $W_{G, d}$ summarized in Table \ref{tbl:PoincarePolynomials}. These Poincar\'e polynomials are essentially computed by inspection; we don't give the full details for the computations. For example the graph given by a path on 3 vertices and an isolated vertex is has the homotopy type of $S^{d - 1} \times S^{d - 1}$. With one vertex of the path fixed, the next vertex is free to go anywhere in the punctured ball around the first vertex. Next the final vertex of the path may be place freely inside some contractible set. So the path contributes a factor of $S^{d - 1}$ Finally after the path is placed in $\R^d$ the union of the balls around its vertices gives a contractible space in $\R^d$ and the isolated vertex may be place anywhere outside of this contractible space so this contributes the other $S^{d - 1}$ factor.  From Table \ref{tbl:PoincarePolynomials} we can determine $\beta(G)$ for any graph on $4$ vertices, and determine that the Poincare Polynomial of $\R^{d \times 4} \setminus \Delta_{d, 4}$ is 
\[64 + 7x^{d - 2} + 92x^{d - 1} + 7x^{2d - 3} + 35x^{2d - 2} + 6x^{3d - 3}\]
In particular in the case of the plane there are 71 rigid isotopy class of graphs on 4 vertices, while there are 64 labeled graphs on 4 vertices all of which can be realized as geometric graphs in the plane. We can also recover the number of rigid isotopy classes for 4 points on the real line with the observation that the 4-cycle and the star cannot be realized as geometric graphs in $\R$. Therefore in the case that $d = 1$ the 211 coming from evaluating the Poincare polynomial at $x = 1$ overcounts by the contribution of 4 for each 4-cycle and for each star, so the overcount is 28, bring the total number of chambers to 211 - 28 = 183.

\end{example}

\bibliographystyle{alpha}
\bibliography{GG.bib}
\end{document}